\date{}
\newcommand*{\affaddr}[1]{#1} 
\newcommand*{\affmark}[1][*]{\textsuperscript{#1}}
\title{Probability measure characterization by $L^{p}$-quantization error function}
\newcommand{\widesim}[2][1.5]{
  \mathrel{\overset{#2}{\scalebox{#1}[1]{$\sim$}}}
}
\title{Convergence Rate of Optimal Quantization  and Application to the Clustering Performance of the Empirical Measure}
\author{
Yating Liu\affmark[1] and  Gilles Pag\`es\affmark[2],\\
\affaddr{\small }\\
\affaddr{\affmark[1] \small CEREMADE, CNRS, UMR 7534, Université Paris-Dauphine,  }\\
\affaddr{\small PSL University, 75016 Paris, France, \small \texttt{ liu@ceremade.dauphine.fr}.}\\
\affaddr{\affmark[2]\small  Sorbonne Université, CNRS, Laboratoire de Probabilit\'es, Statistiques et Mod\'elisations (LPSM),}\\
\affaddr{\small 75252 PARIS Cedex 05, FRANCE,  \texttt{ gilles.pages@sorbonne-universite.fr}.}\\%
}
\begin{document}
\maketitle

\newtheorem*{thm*}{Theorem}
\numberwithin{equation}{section}
\theoremstyle{plain}
\newtheorem{lemma}{Lemma}[section]
\newtheorem{proposition}{Proposition}[section]
\newtheorem{corollary}{Corollary}[section]
\newtheorem{theorem}{Theorem}[section]
\newtheorem{definition}{Definition}[section]
\theoremstyle{remark}
\newtheorem{remark}{Remark}[section]

\renewcommand{\thefootnote}{(\arabic{footnote})}

\maketitle


\begin{abstract}
We study the convergence rate of the optimal quantization for a probability measure sequence $(\mu_{n})_{n\in\mathbb{N}^{*}}$ on $\mathbb{R}^{d}$ converging in the Wasserstein distance in two aspects: the first one is the convergence rate of optimal quantizer $x^{(n)}\in(\mathbb{R}^{d})^{K}$ of $\mu_{n}$ at level $K$; the other one is the convergence rate of the distortion function valued at $x^{(n)}$, called the ``performance'' of $x^{(n)}$. Moreover, we also study the mean performance of the optimal quantization for the empirical measure of a distribution $\mu$ with finite second moment but possibly unbounded support. As an application, we show that the mean performance for the empirical measure of the multidimensional normal distribution $\mathcal{N}(m, \Sigma)$ and of distributions with hyper-exponential tails behave like  $\mathcal{O}(\frac{\log n}{\sqrt{n}})$. This extends the results from \cite{biau2008performance} obtained for compactly supported distribution. We also derive an upper bound which is sharper in the quantization level $K$ but suboptimal in $n$ {by applying results in \cite{fournier2015rate}}. 
\end{abstract}

\vspace{0.2cm}
\emph{keywords:} Clustering performance, Convergence rate of optimal quantization, Distortion function, Empirical measure, Optimal quantization.

\vspace{1cm}

\section{Introduction}

The $K$-means clustering procedure in the unsupervised learning area  was first introduced by \cite{macqueen1967classification}, which 
consists in partitioning a data set of observations $\{\eta_{1}, ..., \eta_{N}\}\subset \mathbb{R}^{d}$ into $K$ classes $\mathcal{G}_{k}, {1\leq k \leq K}$ with respect to a \textit{cluster center} $x=(x_{1}, ..., x_{K})$ in order to minimize the {\color{black}quadratic} distortion function $\mathcal{D}_{K, \eta}$ defined by
\begin{equation}\label{distortionclustering}
x=(x_{1}, ..., x_{K})\in(\mathbb{R}^{d})^{K}\mapsto \mathcal{D}_{K, \eta}(x)\coloneqq\frac{1}{N}\sum_{n=1}^{N}\min_{k=1, ..., K}d(\eta_{n},x_{k})^{2},
\end{equation}
where $d$ denotes a distance on $\mathbb{R}^{d}$. The classification of the observations $\{\eta_{1}, ..., \eta_{N}\}\subset\mathbb{R}^{d}$ in \cite{macqueen1967classification} can be described as follows 
\begin{align}\label{clusterdef}
&\mathcal{G}_{1}=\big\{\;\eta_{n}\in\{\eta_{1}, ..., \eta_{N}\}\;:\;d(\eta_{n}, x_{1}) \leq \min_{2\leq j\leq K}d(\eta_{n}, x_{j})\;\big\}\nonumber\\
&\mathcal{G}_{2}=\big\{\;\eta_{n}\in\{\eta_{1}, ..., \eta_{N}\}\;:\;d(\eta_{n}, x_{2}) \leq \min_{1\leq j\leq K, j\neq 2}d(\eta_{n}, x_{j})\;\big\}\setminus \mathcal{G}_{1}\nonumber\\
&\cdots\nonumber\\
&\mathcal{G}_{K}=\big\{\;\eta_{n}\in\{\eta_{1}, ..., \eta_{N}\}\;:\;d(\eta_{n}, x_{K}) \leq \min_{1\leq j\leq K-1}d(\eta_{n}, x_{j})\;\big\}\setminus \big(\mathcal{G}_{K-1}\cup\cdots\cup \mathcal{G}_{1}\big).
\end{align}
If a cluster center $x^{*}=(x^{*}_{1}, ..., x^{*}_{K})$ {\color{black}satisfies} $\mathcal{D}_{K, \eta}(x^{*})=\inf_{y\in(\mathbb{R}^{d})^{K}}\mathcal{D}_{K, \eta}(y)$, we call $x^{*}$ an optimal cluster center (or  \textit{$K$-means}) for the observation $\eta=(\eta_{1}, ..., \eta_{N})$. {\color{black}Such an optimal cluster center always exists but is generally not unique.}

$K$-means clustering has a close connection with quadratic optimal quantization,  originally developed as a  discretization method for the signal transmission and compression by the Bell laboratories in the 1950s (see \cite{IEEE1982} and \cite{gersho2012vector}). Nowadays, optimal quantization has also become  an efficient tool in numerical probability,  used to provide a discrete representation of a probability distribution. To be more precise, let $\left|\cdot\right|$ denote the Euclidean norm on $\mathbb{R}^{d}$  {induced} by {the canonical} inner product $\langle\cdot{|}\cdot\rangle$ and let $X$ be an $\mathbb{R}^{d}$-valued random variable defined on $(\Omega, \mathcal{F}, \mathbb{P})$ with probability distribution $\mu$ having a finite second moment. The quantization method consists in discretely approximating $\mu$ by using a $K$-tuple $x=(x_{1}, ..., x_{K})\in(\mathbb{R}^{d})^{K}$  and its weight $w=(w_{1}, ..., w_{K})$ as follows, 
\[\mu\simeq\widehat{\mu}^{x}\coloneqq \sum_{k=1}^{K}w_{k}\delta_{x_{k}},\]
where $\delta_{a}$ denotes the Dirac mass at $a$,  the weights $w_{k}$ are computed by $w_{k}=\mu\big(C_{k}(x)\big), k=1, ..., K,$ and $\big(C_{k}(x)\big)_{1\leq k \leq K}$ is a \textit{Vorono\"i partition} induced by $x$, that is, a Borel partition on $\mathbb{R}^{d}$ satisfying 
\begin{equation}\label{voronoidef}
C_{k}(x)\subset V_{k}(x)\coloneqq\big\{\xi\in\mathbb{R}^{d}\;\big|\;\left|\xi-x_{k}\right|=\min_{1\leq j\leq K}\left|\xi-x_{j}\right|\big\}, \;\;k=1, ..., K.
\end{equation}
The value $K$ in the above description is called the \textit{quantization level} and the $K$-tuple above $x=(x_{1}, ..., x_{K})$ is called a \textit{quantizer} (or \textit{quantization grid,  codebook} in the literature). Moreover, we define the \textit{(quadratic) quantization error function} $e_{K, \mu}$ of $\mu$ (or \textit{of $X$}) at level $K$ by
\begin{equation}\label{defquanerror}
x=(x_{1}, ..., x_{K})\in(\mathbb{R}^{d})^{K}\longmapsto e_{K, \mu}(x)\coloneqq \Big[\int_{\mathbb{R}^{d}}\min_{1\leq k\leq K}\left|\xi-x\right|^{2}\mu(d\xi)\Big]^{1/2}. 
\end{equation}
{\color{black}The set $\mathrm{argmin}\,e_{K,\mu}$ is not empty (see e.g. \cite{graf2000foundations}[see Theorem 4.12]) and any element $x^{*}=(x_{1}^{*}, ..., x_{K}^{*})$ in $\mathrm{argmin}\,e_{K,\mu}$ is called a \textit{(quadratic) optimal quantizer} for the probability distribution $\mu$ at level $K$. }
Moreover, we call 
\begin{equation}\label{optiquantizationerror}
e_{K,\,\mu}^{*}=\inf_{y=(y_{1}, ..., y_{K})\in(\mathbb{R}^{d})^{K}}e_{K, \mu}(y)
\end{equation}
the \textit{optimal (quadratic) quantization error} (\textit{optimal error}  for short) at level $K$.

The connection between  $K$-means clustering and  quadratic optimal quantization is the following: if the distance $d$ in (\ref{distortionclustering}) and (\ref{clusterdef}) is the Euclidean distance and if we consider the empirical measure $\bar{\mu}_{N}$ of the dataset  $\{\eta_{1}, ..., \eta_{N}\}$ defined by
\begin{equation}\label{empiricalm}
\bar{\mu}_{N}\coloneqq \frac{1}{N}\sum_{n=1}^{N}\delta_{\eta_{n}},
\end{equation} 
then the distortion function $\mathcal{D}_{K, \eta}$ defined in (\ref{distortionclustering}) is in fact $e_{K, \bar{\mu}_{N}}^{2}$ and 
$\mathrm{argmin}\, \mathcal{D}_{K, \eta}= \mathrm{argmin}\, e_{K, \bar{\mu}_{N}}.$
That is, an optimal quantizer of $\bar{\mu}_{N}$ is in fact an optimal cluster center for the dataset $\{\eta_{1}, ..., \eta_{N}\}$. 

In Figure \ref{figoptimalnormal}, we show an optimal quantizer and its weights for the standard normal distribution $\mathcal{N}\big(\mathbf{0}, \mathbf{I}_{2}\big)$ in $\mathbb{R}^{2}$ at level 60, where $\mathbf{I}_{d}$ denotes the identity matrix of size $d\times d$. 
The color of the cells in the figure represents the weight of each point $x_{k}$ in the quantizer $x=(x_{1}, ..., x_{K})$.  In Figure~\ref{fig2}, we show an optimal cluster center at level $K=20$ for an i.i.d simulated sample $\{\eta_{1}, ..., \eta_{500}\}$ of the $\mathcal{N}(0, \mathbf{I}_{2})$ distribution. 

\begin{figure}[H]
\centering
\begin{minipage}{.5\textwidth}
  \centering
  \includegraphics[width=.9\linewidth]{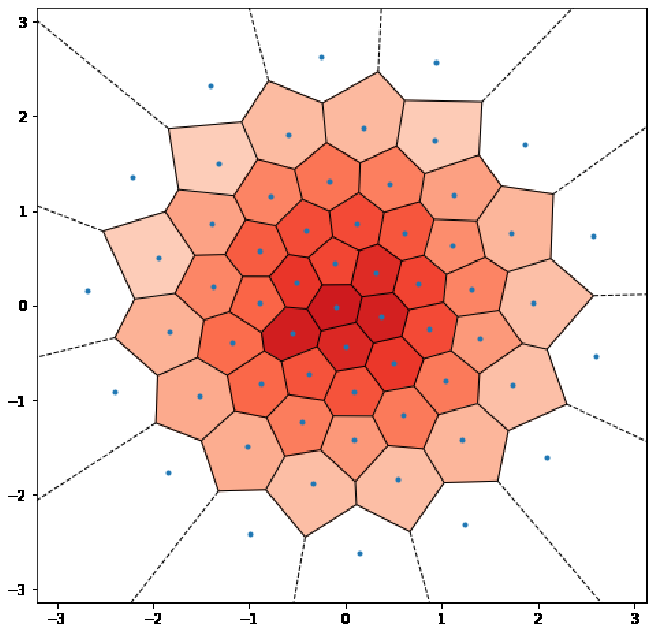}
\caption{An optimal quantizer for  $\mathcal{N}\big(\mathbf{0}, \mathbf{I}_{2}\big)$ \newline at level 60.}
  \label{figoptimalnormal}
\end{minipage}%
\begin{minipage}{.5\textwidth}
  \centering
  \vspace{0.7cm}
  \includegraphics[width=.9\linewidth]{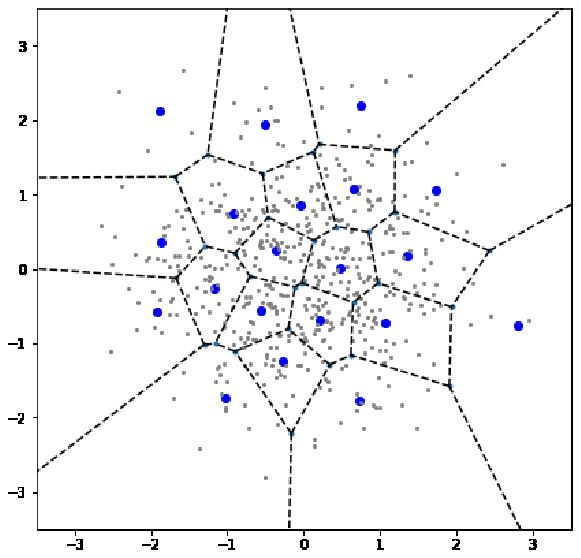}
  \caption{An optimal cluster center (blue points) for an observation $\{\eta_{1}, ..., \eta_{500}\}\widesim{i.i.d}\mathcal{N}(0, \mathbf{I}_{2})$ (grey points).}
  \label{fig2}
\end{minipage}
\end{figure}

For $p\in[1, +\infty)$, let $\mathcal{P}_{p}(\mathbb{R}^{d})$ denote the set of all probability measures on $\mathbb{R}^{d}$ with a finite $p^{th}$-moment. Let $\mu, \nu\in\mathcal{P}_{p}(\mathbb{R}^{d})$ and let $\Pi(\mu,\nu)$ denote the set of all probability measures on $( \mathbb{R}^{d}\times  \mathbb{R}^{d}, Bor( \mathbb{R}^{d})^{\otimes2})$ with marginals $\mu$ and $\nu$, where $Bor(\mathbb{R}^{d})$ denotes the Borel $\sigma$-algebra on $\mathbb{R}^{d}$. For $p\geq1$, the \textit{$L^{p}$-Wasserstein distance} $\mathcal{W}_{p}$ on $\mathcal{P}_{p}(\mathbb{R}^{d})$ is defined by 
\begin{align}\label{defwas2}
\mathcal{W}_{p}(\mu,\nu)&=\Big{(}\inf_{\pi\in\Pi(\mu,\nu)}\int_{\mathbb{R}^{d}\times \mathbb{R}^{d}}\left|x-y\right|^{p}\pi(dx,dy)\Big{)}^{\frac{1}{p}}\nonumber\\
&=\inf\Big{\{}\Big{[}\mathbb{E}\;\left|X-Y\right|^{p}\Big{]}^{\frac{1}{p}},\;  X,Y:(\Omega,\mathcal{A},\mathbb{P})\rightarrow( \mathbb{R}^{d},Bor( \mathbb{R}^{d}))  \;\text{with} \;\mathbb{P}_{X}=\mu, \mathbb{P}_{Y}=\nu\;\Big{\}}.
\end{align}
The space $\mathcal{P}_{p}(\mathbb{R}^{d})$ equipped with the Wasserstein distance $\mathcal{W}_{p}$ is a Polish space, i.e. is separable and complete (see \cite{bolley2008separability}). If $\mu, \nu\in\mathcal{P}_{p}(\mathbb{R}^{d})$, then for any $q\leq p$, $\mathcal{W}_{q}(\mu, \nu)\leq \mathcal{W}_{p}(\mu, \nu).$

With a slight abuse of notation, we define the distortion function for the optimal quantization as follows. 
\begin{definition}[Distortion function]\label{defdistortion}
Let $K\in\mathbb{N}^{*}$ be the quantization level. Let  $\mu\in\mathcal{P}_{2}(\mathbb{R}^{d})$. 
The \textit{(quadratic) distortion function} $\mathcal{D}_{K,\,\mu}$ of $\mu$ at level $K$ is defined by 
\begin{equation}\label{distortiondef}
x=(x_{1}, ...,x_{K})\in(\mathbb{R}^{d})^{K}\longmapsto\mathcal{D}_{K,\,\mu}(x)=\int_{\mathbb{R}^{d}}\min_{1\leq i\leq K}\left|\xi-x_{i}\right|^{2}\mu(d\xi)=e_{K,\mu}^{2}(x).
\end{equation}
\end{definition}

\smallskip
For a fixed (known) probability distribution $\mu$, its optimal quantizers can be computed by several algorithms such as the CLVQ algorithm (see e.g. \cite{pages2015introduction}[Section 3.2]) or the Lloyd I algorithm (see e.g. \cite{MR651807}, \cite{MR651815} and \cite{lloydpagesyu}). However, another situation exists: the probability distribution $\mu$ is unknown but there exists a {\color{black}known} sequence $(\mu_{n})_{n\geq1}$ converging in the Wasserstein distance to $\mu$. 
{\color{black}A typical example is the empirical measure of an i.i.d. $\mu$-distributed sequence random vectors (see (\ref{defempirical}) below). The empirical measure of non i.i.d. random vectors appears for example when dealing with the particle method associated to the McKean-Vlasov equations (see \cite{liu2019optimal}[Section 7.1 and Section 7.5]) or the simulation of the invariant measure of the diffusion process (see \cite{lamberton2002recursive} and \cite{lemaire2005estimation}[Chapter 4]).} 
{\color{black}This leads us to study the consistency and the convergence rate of the optimal quantization for a $\mathcal{W}_{p}$-converging probability distribution sequence $(\mu_{n})_{n\geq1}$. }

\smallskip

There exist several studies in the literature.  The consistency of the optimal quantizers was first proved in \cite{pollard1982quantization}. 
\begin{thm*}[Pollard's Theorem]
Let $\mu_{n}\in\mathcal{P}_{2}(\mathbb{R}^{d}), n\in\mathbb{N}^{*}\cup \{\infty\}$ with $\mathcal{W}_{2}(\mu_{n}, \mu_{\infty})\rightarrow 0$ as $n\rightarrow+\infty$. Assume $\mathrm{card}\big(\mathrm{supp}(\mu_{n})\big)\geq K$, for $n\in\mathbb{N}^{*}\cup\{+\infty\}$. For $n\geq1$, let $x^{(n)}=\big(x_{1}^{(n)},... , x_{K}^{(n)}\big)$ be a $K$-optimal quantizer for $\mu_{n}$, then the quantizer sequence $(x^{(n)})_{n\geq1}$ is bounded in $\mathbb{R}^{d}$ and any limiting point of $(x^{(n)})_{n\geq1}$, denoted by $x^{(\infty)}$, is an optimal quantizer of $\mu_{\infty}$. \footnote{In \cite{pollard1982quantization}[see Theorem 9], the author used \[\mu_{K}\in\mathcal{P}(K)\coloneqq\Big\{\nu\in\mathcal{P}_{2}(\mathbb{R}^{d}) \;\text{such that}\; \text{card\big(supp($\nu$)\big)}\leq K \Big\}\]
to represent a ``quantizer'' at level $K$.  Such a quantizer $\mu_{K}$ is called ``quadratic optimal'' for a probability measure $\mu$ if $\mathcal{W}_{2}(\mu_{K}, \mu)=e_{K,\,\mu}^{*}$.
We propose an alternative proof in Appendix A by using the usual representation of the quantizer $x\in(\mathbb{R}^{d})^{K}$  but  still call this theorem ``Pollard's Theorem''.}
\end{thm*}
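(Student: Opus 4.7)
My plan is to derive Pollard's theorem in three stages: (i) establish joint continuity of the quantization error function in $(\mu,x)$; (ii) use this to prove boundedness of $(x^{(n)})_{n\geq 1}$; (iii) identify any limit point as an optimal quantizer of $\mu_{\infty}$.

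First I would record the two continuity properties of $(\mu,x)\mapsto e_{K,\mu}(x)$. For fixed $x\in(\mathbb{R}^{d})^{K}$, the map $\xi\mapsto\min_{1\leq k\leq K}|\xi-x_{k}|$ is $1$-Lipschitz, so for any $\mathcal{W}_{2}$-optimal coupling between $\mu_{n}$ and $\mu_{\infty}$ the reverse triangle inequality in $L^{2}$ gives $|e_{K,\mu_{n}}(x)-e_{K,\mu_{\infty}}(x)|\leq\mathcal{W}_{2}(\mu_{n},\mu_{\infty})$ \emph{uniformly} in $x$; taking the infimum yields $e^{*}_{K,\mu_{n}}\to e^{*}_{K,\mu_{\infty}}$. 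For fixed $\mu\in\mathcal{P}_{2}(\mathbb{R}^{d})$, continuity of $x\mapsto e_{K,\mu}(x)$ follows from a standard dominated convergence argument, the integrand being locally Lipschitz in $x$ with linear growth in $\xi$.

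Next, for the boundedness of $(x^{(n)})$, I would argue by contradiction and extract a subsequence along which each coordinate $x^{(n)}_{k}$ converges in $\mathbb{R}^{d}\cup\{\infty\}$, partitioning $\{1,\dots,K\}$ into $S$ (finite limits) and $T$ (norm goes to infinity), with $T\neq\emptyset$. If $S=\emptyset$, all centers escape every compact set, and the tightness of $(\mu_{n})$ (guaranteed by $\mathcal{W}_{2}$-convergence) would force $e_{K,\mu_{n}}(x^{(n)})\to+\infty$, contradicting the boundedness of $e^{*}_{K,\mu_{n}}$. If $S\neq\emptyset$, then for $\mu_{\infty}$-a.e. $\xi$ the minimum is eventually attained on $S$, and, using the uniform Wasserstein bound from step (i) to pass from $\mu_{n}$ to $\mu_{\infty}$ together with dominated convergence (the integrand being bounded by $2|\xi|^{2}+2\max_{k\in S}|x^{(n)}_{k}|^{2}$, which is $L^{1}(\mu_{\infty})$-dominated since the $S$-coordinates are bounded), I would obtain
\[
\lim_{n}e_{K,\mu_{n}}(x^{(n)})\;=\;e_{|S|,\mu_{\infty}}\!\bigl((x^{(\infty)}_{k})_{k\in S}\bigr)\;\geq\;e^{*}_{|S|,\mu_{\infty}}\;\geq\;e^{*}_{K-1,\mu_{\infty}}.
\]
Combined with $e_{K,\mu_{n}}(x^{(n)})=e^{*}_{K,\mu_{n}}\to e^{*}_{K,\mu_{\infty}}$, this yields the equality $e^{*}_{K,\mu_{\infty}}=e^{*}_{K-1,\mu_{\infty}}$, contradicting the classical strict inequality $e^{*}_{K,\mu_{\infty}}<e^{*}_{K-1,\mu_{\infty}}$ that holds whenever $\mathrm{card}(\mathrm{supp}(\mu_{\infty}))\geq K$.

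Once boundedness is established, the identification of any limit point $x^{(\infty)}$ of $(x^{(n)})$ is immediate: along a subsequence $x^{(n_{j})}\to x^{(\infty)}$, splitting $|e_{K,\mu_{n_{j}}}(x^{(n_{j})})-e_{K,\mu_{\infty}}(x^{(\infty)})|$ via $e_{K,\mu_{\infty}}(x^{(n_{j})})$ and applying the two continuity bounds of step (i) gives $e_{K,\mu_{\infty}}(x^{(\infty)})=\lim_{j}e^{*}_{K,\mu_{n_{j}}}=e^{*}_{K,\mu_{\infty}}$, so $x^{(\infty)}\in\mathrm{argmin}\,e_{K,\mu_{\infty}}$. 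I expect the boundedness step to be the real obstacle: controlling the integrand uniformly in $n$ when a subset of the centers drifts to infinity is what genuinely uses the quadratic Wasserstein convergence (through uniform integrability of the second moments), and it is the only place where the support hypothesis $\mathrm{card}(\mathrm{supp}(\mu_{\infty}))\geq K$ plays a role.
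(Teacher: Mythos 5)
Your proposal is correct and follows essentially the same route as the paper's Appendix A proof: both arguments split the index set into bounded and escaping coordinates, pass to the limit in $e_{K,\mu_\infty}$ evaluated at $x^{(n)}$ using the uniform bound $\sup_x|e_{K,\mu_n}(x)-e_{K,\mu_\infty}(x)|\leq\mathcal{W}_2(\mu_n,\mu_\infty)$, and derive a contradiction with the strict decrease $e^*_{K,\mu_\infty}<e^*_{K-1,\mu_\infty}$ of Proposition 1.1(i). The only cosmetic difference is that you use dominated convergence (with the bound $2|\xi|^2+2\max_{k\in S}|x^{(n)}_k|^2$) where the paper uses Fatou's lemma, and you treat the case $S=\emptyset$ separately, which the paper leaves implicit.
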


{\color{black}
Let $\mu_{n}\in\mathcal{P}_{2}(\mathbb{R}^{d}),\, n\in\mathbb{N}\cup \{\infty\}$ with $\mathcal{W}_{2}(\mu_{n}, \mu_{\infty})\rightarrow0$ as $n\rightarrow+\infty$. Let $x^{(n)}$ denote an optimal quantiser of $\mu_{n}$.  There are two ways to study the convergence rate of the optimal quantizers. The first way is to directly evaluate the distance between $x^{(n)}$ and $\mathrm{argmin}\,\mathcal{D}_{K, \mu_{\infty}}$. The second way is called \textit{the quantization performance}, defined by 
\begin{equation}\label{quantiperfodef}
\mathcal{D}_{K, \mu_{\infty}}(x^{(n)})-\inf_{x\in(\mathbb{R}^{d})^{K}}\mathcal{D}_{K, \mu_{\infty}}(x).
\end{equation}
This quantity describes the distance between the optimal  error of $\mu_{\infty}$ and the quantization error of $x^{(n)}$ considered as a quantizer of $\mu_{\infty}$ (even $x^{(n)}$ is obviously not ``optimal'' for $\mu_{\infty}$). Several results of convergence rate exist in the framework of the empirical measure. 
}
Let $X_{1}(\omega), ..., X_{n}(\omega), ...$ be i.i.d random vectors with probability distribution $\mu\in\mathcal{P}_{2}(\mathbb{R}^{d})$ and let 
\begin{equation}\label{defempirical}
\mu^{\omega}_{n}\coloneqq\frac{1}{n}\sum_{i=1}^{n}\delta_{X_{i}(\omega)}
\end{equation}
be the empirical measure of $\mu$.  {The almost sure convergence of $\mathcal{W}_{2}(\mu_{n}^{\omega}, \mu)$ has been proved in \cite{pollard1982quantization}[Theorem 7].} Let $x^{(n), \omega}$ denotes an optimal quantizer of $\mu_{n}^{\omega}$ at level $K$. 
In \cite{pollard1982central}, the author has proved that if $\mu$ has a unique optimal quantizer $x$ at level $K$, then the convergence rate (convergence in distribution)  of $\left|x^{(n),\omega}-x\right|$ is $\mathcal{O}(n^{-1/2})$ under appropriate conditions. Moreover, if $\mu$ has a support contained in $B(0, R)$, where $B(0, R)$ denotes the ball in $\mathbb{R}^{d}$ centered at $0$ with radius $R$, an upper bound of the mean performance has been proved in \cite{biau2008performance}, shown as follows,
\[\mathbb{E}\,\mathcal{D}_{K,\,\mu}(x^{(n), \omega}) - \inf_{x\in(\mathbb{R}^{d})^{K}}\mathcal{D}_{K,\,\mu}(x)\leq \frac{12K\cdot R^{2}}{\sqrt{n}}.\]

In this paper, we extend the convergence results in 
\cite{pollard1982central} and in \cite{biau2008performance}
 in two perspectives: first, we give an upper bound of the quantization performance 
\begin{equation}\label{quantiperfodef2}\mathcal{D}_{K, \mu_{\infty}}(x^{(n)})-\inf_{x\in(\mathbb{R}^{d})^{K}}\mathcal{D}_{K, \mu_{\infty}}(x).\end{equation}
and {\color{black}that of related optimal quantizers }
for any probability distribution sequence $(\mu_{n})_{n\geq1}$ converging in the Wasserstein distance. 
Then, we generalize the clustering performance results in \cite{biau2008performance} to empirical measures in $\mathcal{P}_{2}(\mathbb{R}^{d})$ possibly having an unbounded support.

\smallskip
{Our main results are as follows. We obtain in Section \ref{section2} a non-asymptotic upper bound for the quantization performance:
for every $n\in\mathbb{N}^{*}$, 
\begin{equation}\label{perfoo}
\mathcal{D}_{K,\, \mu_{\infty}}(x^{(n)})-\inf_{x\in(\mathbb{R}^{d})^{K}}\mathcal{D}_{K,\,  \mu_{\infty}}(x) \leq4e_{K, \mu_{\infty}}^{*}\mathcal{W}_{2}(\mu_{n}, \mu_{\infty})+4\mathcal{W}_{2}^{2}(\mu_{n}, \mu_{\infty}).
\end{equation}
Moreover, if $\mathcal{D}_{K, \mu_{\infty}}$ is twice differentiable at 
\begin{equation}\label{fklabel}
F_{K}\coloneqq \big\{ x=(x_{1}, ..., x_{K})\in(\mathbb{R}^{d})^{K}\;\big|\; x_{i}\neq x_{j}, \text{ if } i\neq j\big\}
\end{equation}
and if the Hessian matrix $H_{\mathcal{D}_{K, \mu_{\infty}}}$ of $\mathcal{D}_{K, \mu_{\infty}}$ is positive definite in the neighboorhood of every optimal quantizer $x^{(\infty)}\in G_{K}(\mu_{\infty})$ having the eigenvalues lower bounded by a $\lambda^{*}>0$, then, for $n$ large enough, 
\[d\big(x^{(n)}, G_{K}(\mu_{\infty})\big)^{2}\leq \frac{8}{\lambda^{*}}e_{K, \mu_{\infty}}^{*}\cdot\mathcal{W}_{2}(\mu_{n},\mu_{\infty})+\frac{8}{\lambda^{*}}\cdot\mathcal{W}^{2}_{2}(\mu_{n},\mu_{\infty}),\] 
where $d(\xi,A)\coloneqq\min_{a\in A}\left|\xi-a\right|$ denotes the distance between a point {\color{black}$\xi\in \mathbb{R}^{d}$ and a set $A\subset \mathbb{R}^{d}$}.

\smallskip

Several criterions for the positive definiteness of the Hessian matrix $H_{\mathcal{D}_{K, \mu}}$ of the distortion function $\mathcal{D}_{K, \mu}$ are established in Section \ref{positivedef}. 
We show in Section \ref{continuityhg} the conditions under which the distortion function $\mathcal{D}_{K, \mu}$ is twice differentiable in every $x\in F_{K}$ and give the exact formula of the Hessian matrix $H_{\mathcal{D}_{K, \mu}}$. Moreover, we also discuss several sufficient and necessary conditions for the positive definiteness of the Hessian matrix in dimension $d\geq2$ and in dimension 1. 

}

In Section \ref{section3}, we give two upper bounds for the \textit{clustering performance}  $\mathbb{E}\,\mathcal{D}_{K,\,\mu}(x^{(n), \, \omega}) - \inf_{x\in(\mathbb{R}^{d})^{K}}\mathcal{D}_{K,\,\mu}(x)$, where $x^{(n),\, \omega}$ is an optimal quantizer of $\mu_{n}^{\omega}$ defined in (\ref{defempirical}).  
If $\mu\in\mathcal{P}_{q}(\mathbb{R}^{d})$ for some $q>2$, a first upper bound is established in Proposition \ref{convrateemprical}
{\begin{align}
\mathbb{E}\,\mathcal{D}_{K,\,\mu}&(x^{(n),\omega})-\inf_{x\in(\mathbb{R}^{d})^{K}}\mathcal{D}_{K,\,\mu}(x)\nonumber\\
&\leq C_{d, q,\mu, K}\times \begin{cases}
n^{-1/4}+n^{-(q-2)/2q}& \mathrm{if}\:d<4\:\mathrm{and}\:q\neq4\\
n^{-1/4}\big(\log(1+n)\big)^{1/2}+n^{-(q-2)/2q}  & \mathrm{if}\:d=4\:\mathrm{and}\:q\neq4\\
n^{-1/d}+n^{-(q-2)/2q} & \mathrm{if}\:d>4\:\mathrm{and}\:q\neq d/(d-2)\nonumber
\end{cases},
\end{align}
}
\hspace{-0.1cm}where 
$C_{d, q,\mu, K}$ is a constant depending on $d, q, \mu$ and the quantization level $K$. This result is 
a direct application of the non-asymptotic upper bound (\ref{perfoo}) combined with results in \cite{fournier2015rate} about the mean convergence rate of the empirical measure for the Wasserstein distance.
If $d\geq4$ and $q>\frac{2d}{d-2}$, this constant $C_{d, q,\mu, K}$ 
is roughly decreasing as $K^{-1/d}$ (see Remark \ref{remC1d}). This upper bound is sharper in $K$ compared with the upper bound (\ref{resultintro}) below, although it suffers from the curse of dimensionality.

Meanwhile, we establish another upper bound for the clustering performance in Theorem \ref{performance}, which is sharper in $n$ 
but increasing faster than linearly in $K$.  This upper bound is
\begin{equation}\label{resultintro}
\mathbb{E}\,\mathcal{D}_{K,\,\mu}(x^{(n), \omega}) - \inf_{x\in(\mathbb{R}^{d})^{K}}\mathcal{D}_{K,\,\mu}(x)\leq \frac{2K}{\sqrt{n}} \Big[ r_{2n}^{2}+\rho_{K}(\mu)^{2}+2r_{1}\big(r_{2n}+\rho_{K}(\mu)\big)\Big],
\end{equation}
where $r_{n}\coloneqq \big\Vert\max_{1\leq i \leq n}\left| X_{i}\right|\big\Vert_{2}$ and $\rho_{K}(\mu)$ is the maximum radius of optimal quantizers for $\mu$, defined by
\begin{equation}\label{maxradius}
\rho_{K}(\mu)\coloneqq \max\Big\{\max_{1\leq k \leq K}\left| x^{*}_{k}\right|,\;\; (x^{*}_{1}, ..., x^{*}_{K}) \text{ is an optimal quantizer of $\mu$ at level $K$}\Big\}.
\end{equation}
In particular, we give a precise upper bound for $\mu=\mathcal{N}(m, \Sigma)$,
 the multidimensionnal normal distribution
\begin{equation}\label{performemprical}
\mathbb{E}\,\mathcal{D}_{K,\,\mu}(x^{(n), \omega}) - \inf_{x\in(\mathbb{R}^{d})^{K}}\mathcal{D}_{K,\,\mu} (x)\leq C_{\mu}\cdot \frac{2K}{\sqrt{n}}\Big[ 1+ \log n+\gamma_{K}\log K\big(1+\frac{2}{d}\big)\Big],
\end{equation}
where $\limsup_{K}\gamma_{K}=1$ and $C_{\mu}=12\cdot\Big[1\vee \log\big(2\int_{\mathbb{R}^{d}}\text{exp}(\frac{1}{4}\left|\xi\right|^{4})\mu(d\xi)\big)\Big]$. If $\mu=\mathcal{N}(0, \mathbf{I}_{d})$, $C_{\mu}=12(1+\frac{d}{2})\cdot\log 2$. 

\smallskip
We start our discussion with a brief review on the properties of  optimal quantization.

\subsection{Properties of {the} Optimal Quantization}\label{propoptimal}

\noindent Let 
$G_{K}(\mu)=\mathrm{argmin}\,\mathcal{D}_{K, \mu}$ denote the set of all optimal quantizers at level $K$ of $\mu$ and let $e^{*}_{K, \mu}$ denote the optimal quantization error of $\mu$ defined in (\ref{optiquantizationerror}). 
\begin{proposition}\label{relation}
Let $K\in\mathbb{N}^{*}$. Let $\mu\in\mathcal{P}_{2}(\mathbb{R}^{d})$ and $\mathrm{card}\big(\mathrm{supp}(\mu)\big)\geq K$.
\begin{enumerate}[(i)]
\item 
 If $K\geq2$, then $e_{K,\,\mu}^{*}< e_{K-1,\,\mu}^{*}$.
\item \textit{(Existence and boundedness of optimal quantizers)} The set $G_{K}(\mu)$
is nonempty and compact  so that $\rho_{K}(\mu)$ defined in (\ref{maxradius}) is finite for any fixed $K$.  Moreover, if $x=(x_{1}, ..., x_{K})$ is an optimal quantizer of $\mu$, then $x\in F_{K}$, where $F_{K}$ is defined in (\ref{fklabel}). 
\item If the support of $\mu$, denoted by $\mathrm{supp}(\mu)$, is a compact, 
then for every optimal quantizer $\,x=(x_{1}, ..., x_{K})\in G_{K}(\mu)$, its elements $x_{k}, 1\leq k\leq K$ are 
contained in the closure of convex hull of $\mathrm{supp}(\mu)$, denoted by $\mathcal{H}_{\mu}\coloneqq \overline{\mathrm{conv}\big(\mathrm{supp}(\mu)\big)}$. 
\end{enumerate}
\end{proposition}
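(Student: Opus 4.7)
My plan is to prove the three assertions in order and to leverage the strict monotonicity (i) repeatedly in (ii) and (iii). For (i), I would fix an optimal $(K-1)$-quantizer $y^{*}=(y_{1}^{*},\ldots,y_{K-1}^{*})$ and use $\mathrm{card}(\mathrm{supp}(\mu))\geq K$ to choose $z_{0}\in \mathrm{supp}(\mu)\setminus\{y_{1}^{*},\ldots,y_{K-1}^{*}\}$. A small ball $B(z_{0},r)$ disjoint from the $y_{j}^{*}$'s can be chosen with $\mu\bigl(B(z_{0},r)\bigr)>0$. On this ball, inserting $z_{0}$ as a $K$-th center strictly decreases the integrand in the quantization error, so integrating yields $e_{K,\mu}(y_{1}^{*},\ldots,y_{K-1}^{*},z_{0})<e_{K-1,\mu}^{*}$.

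For (ii), existence follows from \cite{graf2000foundations} as recalled in the excerpt; the substantive point---and the main obstacle of the whole proposition---is boundedness of $G_{K}(\mu)$. I would argue by contradiction: if a sequence $x^{(n)}\in G_{K}(\mu)$ had $\max_{k}|x_{k}^{(n)}|\to+\infty$, then after extracting a subsequence the index set splits into a bounded part $I$ along which $x_{i}^{(n)}\to x_{i}^{\infty}$ and a non-empty complement $J=I^{c}$ with $|x_{j}^{(n)}|\to +\infty$ for $j\in J$. The integrand $\min_{1\leq k\leq K}|\xi-x_{k}^{(n)}|^{2}$ then converges pointwise to $\min_{i\in I}|\xi-x_{i}^{\infty}|^{2}$, and Fatou's lemma gives
\begin{equation*}
\liminf_{n}\mathcal{D}_{K,\mu}(x^{(n)})\geq \mathcal{D}_{|I|,\mu}\bigl((x_{i}^{\infty})_{i\in I}\bigr)\geq \bigl(e_{|I|,\mu}^{*}\bigr)^{2}>\bigl(e_{K,\mu}^{*}\bigr)^{2},
\end{equation*}
where the last strict inequality follows by iterating (i). This contradicts $\mathcal{D}_{K,\mu}(x^{(n)})=(e_{K,\mu}^{*})^{2}$. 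Closedness of $G_{K}(\mu)$ then follows from continuity of $\mathcal{D}_{K,\mu}$, so $G_{K}(\mu)$ is compact and $\rho_{K}(\mu)<+\infty$. The inclusion $G_{K}(\mu)\subset F_{K}$ is again immediate from (i): a coincidence $x_{i}=x_{j}$ in an optimal quantizer produces the same error as the $(K-1)$-quantizer obtained by dropping $x_{j}$, forcing $e_{K,\mu}^{*}\geq e_{K-1,\mu}^{*}$, which is forbidden.

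For (iii), assume $\mathrm{supp}(\mu)$ is compact, fix $x\in G_{K}(\mu)$, and let $\pi$ denote the metric projection onto the closed convex set $\mathcal{H}_{\mu}$. The key ingredient is the obtuse-angle inequality
\begin{equation*}
|y-\xi|^{2}\geq |\pi(y)-\xi|^{2}+|y-\pi(y)|^{2}\qquad\text{for all }y\in\mathbb{R}^{d},\;\xi\in\mathcal{H}_{\mu}.
\end{equation*}
If some $x_{k}\notin\mathcal{H}_{\mu}$, replacing $x_{k}$ by $\pi(x_{k})$ produces a competitor $\tilde{x}$; since $\mathrm{supp}(\mu)\subset\mathcal{H}_{\mu}$, integrating the inequality on the Voronoi cell $V_{k}(x)$ and using the trivial bound on the other cells gives
\begin{equation*}
\mathcal{D}_{K,\mu}(\tilde{x})\leq \mathcal{D}_{K,\mu}(x)-|x_{k}-\pi(x_{k})|^{2}\,\mu\bigl(V_{k}(x)\bigr).
\end{equation*}
If $\mu(V_{k}(x))>0$, this strictly decreases the distortion and contradicts optimality; if $\mu(V_{k}(x))=0$, then $x$ produces the same error as the $(K-1)$-quantizer $(x_{j})_{j\neq k}$, again contradicting (i). Hence every $x_{k}$ must lie in $\mathcal{H}_{\mu}$.
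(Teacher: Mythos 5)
Your arguments are correct, and for the only part the paper actually proves itself --- item (iii), in Appendix~B --- your route is essentially the paper's: assume some $x_{k}\notin\mathcal{H}_{\mu}$, move it toward the convex hull, and split into two cases according to whether the Vorono\"i cell of $x_{k}$ carries positive $\mu$-mass, the degenerate case being killed by (i). The only difference is the projection device: the paper takes a hyperplane strictly separating $x_{k}$ from $\mathcal{H}_{\mu}$ and projects $x_{k}$ orthogonally onto it, checking pointwise that $|z-\hat{x}_{k}|<|z-x_{k}|$ for every $z\in\mathcal{H}_{\mu}$; you project directly onto the closed convex set $\mathcal{H}_{\mu}$ and invoke the obtuse-angle inequality, which has the small advantage of producing the quantitative gain $|x_{k}-\pi(x_{k})|^{2}\mu\big(V_{k}(x)\big)$ rather than a merely qualitative strict decrease. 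For (i) and (ii) the paper simply cites Graf--Luschgy, whereas you supply proofs; these are sound (your boundedness argument via Fatou and the index split $I\cup I^{c}$ is in fact the same mechanism the paper uses in Appendix~A to prove Pollard's theorem).

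One organizational caveat you should make explicit: as written, (i) and (ii) are circular. Your proof of (i) at level $K$ starts by fixing \emph{an optimal} $(K-1)$-quantizer $y^{*}$, i.e.\ it presupposes that the infimum defining $e_{K-1,\mu}^{*}$ is attained, which is the existence claim of (ii) at level $K-1$; meanwhile your proof of (ii) at level $K$ iterates (i) to get $e_{|I|,\mu}^{*}>e_{K,\mu}^{*}$ and to conclude $G_{K}(\mu)\subset F_{K}$. This is repaired by the standard induction on the level: prove existence at level $1$ by coercivity and continuity of $a\mapsto\int|\xi-a|^{2}\mu(d\xi)$, then alternate --- existence at level $K-1$ gives the strict decrease (i) at level $K$ via your ball argument, which in turn gives boundedness and hence existence at level $K$. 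Stated as a joint induction, your proof is complete; stated as two independent items in the given order, it is not.
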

For the proof of Proposition \ref{relation}-(i) and (ii), we refer to \cite{graf2000foundations}[see Theorem 4.12] and  for the proof of (iii) to Appendix B. 
\begin{thm*}(Non-asymptotic Zador's Theorem, see \cite{luschgy2008functional} and \cite{pages2018numerical}[Theorem 5.2]) Let $\eta>0$. If $\mu\in\mathcal{P}_{2+\eta}(\mathbb{R}^{d})$, then for every quantization level $K$, there exists a constant $C_{d, \eta}\in(0, +\infty)$ which depends only on $d$ and $\eta$ such that 
\begin{equation}\label{nonasymptoticzador}
e_{K, \mu}^{*}\leq C_{d, \eta}\cdot\sigma_{2+\eta}(\mu) K^{-1/d},
\end{equation}
where for $r\in(0, +\infty)$, $\sigma_{r}(\mu)=\min_{a\in\mathbb{R}^{d}}\big[\int_{\mathbb{R}^{d}}\left|\xi-a\right|^{r}\mu(d\xi)\big]^{1/r}$. 
\end{thm*}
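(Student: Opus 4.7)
The plan is a layered construction that reduces the general case to the compactly supported one. First I would translate $\mu$ so that the origin attains the minimum in the definition of $\sigma_{2+\eta}(\mu)$; this leaves $e_{K,\mu}^{*}$ unchanged and gives $\int_{\mathbb{R}^{d}}|x|^{2+\eta}\mu(dx)=\sigma_{2+\eta}(\mu)^{2+\eta}$. For the compact building block, for any probability measure $\nu$ supported in a ball $B(0,R)$, placing $m^{d}$ points on a uniform cubic grid in a cube of side $2R$ with $m=\lfloor K^{1/d}\rfloor$ and padding to $K$ points yields $e_{K,\nu}^{*}\leq c_{d}R\,K^{-1/d}$ for a purely dimensional constant $c_{d}$, since every point of the cube lies within distance $\sqrt{d}\,R/m$ of a grid point and $m\geq K^{1/d}/2$ for $K\geq 1$.

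Next, I would fix a base radius $R_{0}>0$ (optimized later), set $R_{j}=2^{j}R_{0}$, $A_{0}=B(0,R_{0})$, and $A_{j}=B(0,R_{j})\setminus B(0,R_{j-1})$ for $j\geq 1$, so each $A_{j}\subset B(0,R_{j})$. Applying the compact bound to the normalised restriction $\mu|_{A_{j}}/\mu(A_{j})$ with $K_{j}$ quantizers gives $\int_{A_{j}}\min_{i}|x-x_{i}^{(j)}|^{2}\mu(dx)\leq c_{d}^{2}R_{j}^{2}K_{j}^{-2/d}\mu(A_{j})$; concatenating the layer quantizers and minimising the resulting sum under $\sum_{j}K_{j}=K$ by Lagrange multipliers (the optimizer is $K_{j}\propto(R_{j}^{2}\mu(A_{j}))^{d/(d+2)}$) gives
\[
e_{K,\mu}^{*\,2}\leq c_{d}^{2}\,K^{-2/d}\Big[\sum_{j\geq 0}\bigl(R_{j}^{2}\mu(A_{j})\bigr)^{d/(d+2)}\Big]^{(d+2)/d}.
\]

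The remaining task is to control the bracketed sum via the $(2+\eta)$-moment. For $j\geq 1$ and $x\in A_{j}$ one has $|x|\geq R_{j-1}$, so $\mu(A_{j})\leq R_{j-1}^{-(2+\eta)}\int_{A_{j}}|x|^{2+\eta}\mu(dx)$ and hence $R_{j}^{2}\mu(A_{j})\leq 4R_{j-1}^{-\eta}\int_{A_{j}}|x|^{2+\eta}\mu(dx)$. Applying H\"older's inequality with conjugate exponents $(d+2)/2$ and $(d+2)/d$ to the resulting sum factors out the geometric series $\sum_{j\geq 1}R_{j-1}^{-\eta d/2}=C_{d,\eta}R_{0}^{-\eta d/2}$, which converges since $\eta d/2>0$, and bounds the remaining factor by $\sigma_{2+\eta}(\mu)^{(2+\eta)d/(d+2)}$. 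Adding the $j=0$ term $(R_{0}^{2}\mu(A_{0}))^{d/(d+2)}\leq R_{0}^{2d/(d+2)}$ and choosing $R_{0}=\sigma_{2+\eta}(\mu)$ balances both contributions at order $\sigma_{2+\eta}(\mu)^{2d/(d+2)}$; raising to the $(d+2)/d$-th power gives $[\,\cdots\,]^{(d+2)/d}\leq C_{d,\eta}^{2}\sigma_{2+\eta}(\mu)^{2}$ and hence the stated bound $e_{K,\mu}^{*}\leq C_{d,\eta}\sigma_{2+\eta}(\mu)K^{-1/d}$.

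The main obstacle I foresee is the integer rounding of the layer allocation $K_{j}$: the Lagrangian optimum is real-valued, and for all but finitely many annuli it prescribes $K_{j}<1$, so a single quantizer cannot literally be placed in each layer. My fix is to keep only the first $J_{K}=O(\log K)$ layers for which $K_{j}\geq 1$ after rounding, and to pool the rest into a single tail term bounded directly by $\int_{|x|>R_{J_{K}}}|x|^{2}\mu(dx)\leq R_{J_{K}}^{-\eta}\sigma_{2+\eta}(\mu)^{2+\eta}$; for $R_{J_{K}}$ growing geometrically with $J_{K}$, this pooled tail has the same order as one individual annular contribution and is absorbed into the final constant $C_{d,\eta}$. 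A secondary minor point is verifying that the Lagrangian optimum of the discrete-allocation problem matches the continuous one up to a factor depending only on $d$, which follows from monotonicity of $K_{j}\mapsto K_{j}^{-2/d}$.
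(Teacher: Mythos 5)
The paper does not prove this statement: it is quoted as the ``extended Pierce lemma'' and attributed to \cite{luschgy2008functional} and \cite{pages2018numerical}, where it is established by a rather different device (a one--dimensional Pierce-type argument combined with random/product quantization). Your route --- dyadic annuli $A_j=B(0,2^jR_0)\setminus B(0,2^{j-1}R_0)$, a cubic-grid quantizer on each shell, Lagrangian allocation of the budget $K$ across shells, and H\"older with exponents $\frac{d+2}{2},\frac{d+2}{d}$ to trade the geometric decay of $R_{j-1}^{-\eta d/2}$ against the $(2+\eta)$-moment --- is the classical Zador upper-bound construction, and all the exponent bookkeeping checks out: the optimized layer sum is $K^{-2/d}\big[\sum_j(R_j^2\mu(A_j))^{d/(d+2)}\big]^{(d+2)/d}$, and with $R_0=\sigma_{2+\eta}(\mu)$ both the $j=0$ term and the tail sum are of order $\sigma_{2+\eta}(\mu)^{2d/(d+2)}$, giving $C_{d,\eta}^2\sigma_{2+\eta}(\mu)^2K^{-2/d}$ as claimed. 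What your approach buys is a fully self-contained, elementary proof; what it costs is a worse constant and the allocation bookkeeping below.

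Two points in your patch for the integer allocation need tightening. First, $K_j^*\propto(R_j^2\mu(A_j))^{d/(d+2)}$ need not be monotone in $j$ (shells can have zero or tiny mass), so ``the first $J_K$ layers for which $K_j\ge1$'' is not well defined; instead fix $J_K\coloneqq\lceil\frac{2}{\eta d}\log_2K\rceil$ by fiat, which is exactly what makes the pooled tail $R_{J_K}^{-\eta}\sigma_{2+\eta}(\mu)^{2+\eta}\lesssim\sigma_{2+\eta}(\mu)^2K^{-2/d}$ (and note this tail bound needs one codepoint near the origin, costing a harmless factor $4$ from $\min_i|x-x_i|\le|x|+R_0\le2|x|$). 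Second, for the retained shells with $K_j^*<1$ you can simply spend one point each: $K_j^*<1$ forces $R_j^2\mu(A_j)<(S/K)^{(d+2)/d}\asymp\sigma_{2+\eta}(\mu)^2K^{-(d+2)/d}$, so these $O(\log K)$ shells contribute $O(\sigma^2K^{-2/d}\cdot K^{-1}\log K)$ in total, which is negligible; the extra $J_K$ points plus the rounding $\lfloor K_j^*\rfloor\ge K_j^*/2$ at most doubles the budget, absorbed by relabelling $2K\to K$. Finally, the regime $K\le K_0(d,\eta)$ (where $J_K$ would exceed $K$) and the degenerate case $\sigma_{2+\eta}(\mu)=0$ must be handled separately, but both are trivial: $e^*_{K,\mu}\le e^*_{1,\mu}\le\sigma_{2+\eta}(\mu)\le K_0^{1/d}\sigma_{2+\eta}(\mu)K^{-1/d}$. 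With these repairs the argument is complete.
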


When $\mu$ has an unbounded support, we know from  \cite{pages2012asymptotics} that $\lim_{K}\rho_{K}(\mu)=+\infty$. The same paper also gives an asymptotic upper bound of $\rho_{K}$ when $\mu$ has a polynomial tail or a hyper-exponential tail. 

\begin{thm*}(\cite{pages2012asymptotics}[see Theorem 1.2])
Let $\mu\in\mathcal{P}_{p}(\mathbb{R}^{d})$ be absolutely continuous with respect to the Lebesgue measure $\lambda_{d}$ on $\mathbb{R}^{d}$ and let $f$ denote its density function.  
\begin{enumerate}[(i)]
\item \emph{Polynomial tail.} For $p\geq2$, if $\mu$ has a $c$-th polynomial tail with $c>d+p$ in the sense that there exists $\tau>0, \beta\in\mathbb{R}$ and $A>0$ such that $\forall \xi\in\mathbb{R}^{d}, \left|\xi\right|\geq A \Longrightarrow\, f(\xi)=\frac{\tau}{\left|\xi\right|^{c}}(\log\left|\xi\right|)^{\beta}$, then 
\begin{equation}\label{polynomialtail}
 \lim_{K}\frac{\log \rho_{K}}{\log K}=\frac{p+d}{d(c-p-d)}.
 \end{equation}
\item \emph{Hyper-exponential tail.} If $\mu$ has a $(\vartheta, \kappa)$-hyper-exponential tail in the sense that 
there exists $\tau>0, \kappa,\vartheta>0, c>-d$ and $A>0$ such that $\forall \xi\in\mathbb{R}^{d}, \left|\xi\right|\geq A\Longrightarrow\, f(\xi)=\tau \left| \xi\right|^{c}e^{-\vartheta\left|\xi\right|^{\kappa}}$, then
\begin{equation}\label{upperbound}
\limsup_{K}\frac{\rho_{K}}{(\log K)^{1/\kappa}}\leq 2\vartheta^{-1/\kappa}\Big(1+\frac{2}{d}\Big)^{1/\kappa}.
\end{equation}
Furthermore, if $d=1$, $\lim_{K}\frac{\rho_{K}}{(\log K)^{1/\kappa}}=\big(\frac{3}{\vartheta}\big)^{1/\kappa}$. 
\end{enumerate}
\end{thm*}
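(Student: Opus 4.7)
The idea is to bracket $\rho_K$ between a centroid-type upper bound, obtained from the stationarity of an optimal quantizer together with the explicit tail of $f$, and a covering-type lower bound, obtained from Zador's theorem. Fix an optimal quantizer $\alpha^{(K)}=(a_1^*,\ldots,a_K^*)$ at level $K$ and let $a^*\in\alpha^{(K)}$ be its outermost point, with $|a^*|=\rho_K$ and Voronoi cell $V^*$.

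For the \emph{upper bound on $\rho_K$} I would combine three ingredients. First, the first-order stationarity condition gives $a^*=\mu(V^*)^{-1}\int_{V^*}\xi\,\mu(d\xi)$, hence $\rho_K\le \mu(V^*)^{-1}\int_{V^*}|\xi|\,\mu(d\xi)$. Second, because every other quantizer point satisfies $|a_j^*|\le\rho_K$, the cell $V^*$ lies in the distant region $\{\xi:\langle\xi,a^*\rangle/|a^*|\ge \rho_K/2\}\cap\{|\xi|\ge \rho_K/2\}$, so that both $\mu(V^*)$ and $\int_{V^*}|\xi|\mu(d\xi)$ are controlled directly by the given tail of $f$. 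Third, a lower bound on $\mu(V^*)$ comes from comparing $\alpha^{(K)}$ with $\alpha^{(K)}\setminus\{a^*\}$: the distortion gap satisfies
\[
(e_{K-1,\mu}^*)^{2}-(e_{K,\mu}^*)^{2}\;\ge\; c\,\mathrm{dist}\bigl(a^*,\alpha^{(K)}\setminus\{a^*\}\bigr)^{2}\,\mu(V^*),
\]
and the left-hand side is bounded by $(e_{K-1,\mu}^*)^{2}\le C\,K^{-2/d}$ via the non-asymptotic Zador estimate. Plugging the tail bounds of the second step and the mass lower bound of the third step into the centroid inequality of the first step produces an implicit inequality in $\rho_K$. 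Solving it under the polynomial tail $\tau|\xi|^{-c}(\log|\xi|)^\beta$ yields $\log\rho_K/\log K\le (p+d)/(d(c-p-d))+o(1)$; solving it under the $(\vartheta,\kappa)$-hyper-exponential tail yields $\rho_K\le 2\vartheta^{-1/\kappa}(1+2/d)^{1/\kappa}(\log K)^{1/\kappa}(1+o(1))$, where the geometric factor $(1+2/d)^{1/\kappa}$ arises from the polar integration $\int r^{d-1}e^{-\vartheta r^\kappa}dr$ on the cone supporting $V^*$.

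For the \emph{matching lower bound}, needed for the equality in (\ref{polynomialtail}) and for the $d=1$ case of (\ref{upperbound}), I argue by contradiction. If $\rho_K\le R$, then $\min_k|\xi-a_k^*|\ge|\xi|-R$ on $\{|\xi|\ge 2R\}$, so
\[
(e_{K,\mu}^*)^{2}\;\ge\;\int_{\{|\xi|\ge 2R\}}\bigl(|\xi|-R\bigr)^{2} f(\xi)\,d\xi.
\]
Evaluating this explicit tail integral under each assumption and confronting it with Zador's upper bound $(e_{K,\mu}^*)^{2}=O(K^{-2/d})$ forces $R$, and hence $\rho_K$, to grow at least at the claimed rate. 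In dimension one the Voronoi partition reduces to the ordering of the quantizer points on the real line, so the upper- and lower-bound constants collapse onto each other and one recovers the explicit limit $(3/\vartheta)^{1/\kappa}$.

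The \emph{main obstacle}, in my view, is tracking the precise geometric constant $2(1+2/d)^{1/\kappa}$ in (\ref{upperbound}). A crude inclusion like $V^*\subset\{|\xi|\ge\rho_K/2\}$ already yields the correct $(\log K)^{1/\kappa}$ rate but an inflated constant; the sharp constant requires the full half-space description of $V^*$ and a careful optimisation of the angular aperture used in the polar computation. Producing a matching \emph{lower} bound in dimension $d\ge 2$ would demand an even finer analysis of the shape of the extreme cell, which is presumably why the theorem asserts only the upper bound (\ref{upperbound}) when $d\ge 2$ and a two-sided sharp statement only when $d=1$.
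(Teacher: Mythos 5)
First, a point of reference: the paper does not prove this statement. It is quoted verbatim from \cite{pages2012asymptotics} (Theorem 1.2 there) and used as a black box in Theorem \ref{performance}, so there is no in-paper proof to compare yours with. Your road map does resemble the strategy of the cited reference in outline (stationarity of the outermost point, control of the mass of its Voronoi cell, Zador's theorem, and a tail-versus-Zador contradiction for the lower bound), but as written it contains a concrete logical gap at its pivotal step, and it remains a plan rather than a proof.

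The gap is in Step 3 of your upper bound. The centroid inequality $\rho_K\le\mu(V^*)^{-1}\int_{V^*}|\xi|\,\mu(d\xi)$ needs a \emph{lower} bound on $\mu(V^*)$, which is what you announce; but the inequality you then write, $(e^*_{K-1})^2-(e^*_K)^2\ge c\,\mathrm{dist}(a^*,\alpha^{(K)}\setminus\{a^*\})^2\,\mu(V^*)$, combined with $(e^*_{K-1})^2\le CK^{-2/d}$, yields an \emph{upper} bound on $\mu(V^*)$ --- the wrong direction. The inequality itself is also stated the wrong way round: deleting $a^*$ from the optimal grid gives $e_{K-1}^{*2}-e_K^{*2}\le\int_{V^*}\bigl(d(\xi,\alpha^{(K)}\setminus\{a^*\})^2-|\xi-a^*|^2\bigr)\mu(d\xi)$, i.e.\ an upper bound on the decrement; a useful lower bound on $\mu(V^*)$ requires in addition a sharp \emph{lower} bound on the decrement $e_{K-1}^{*2}-e_K^{*2}\gtrsim K^{-(1+2/d)}$, which does not follow from Zador's theorem alone and is one of the genuinely hard lemmas here. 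This also shows that your explanation of the constant is off: the factor $\bigl(1+\frac{2}{d}\bigr)^{1/\kappa}$ does not come from the angular aperture in a polar integration over the cone supporting $V^*$, but from matching the tail mass $e^{-\vartheta(\rho_K/2)^\kappa}$ against the decrement rate $K^{-(1+2/d)}$. Two further assertions need arguments: the inclusion $V^*\subset\{\langle\xi,a^*/|a^*|\rangle\ge\rho_K/2\}$ holds only if some quantizer point sits at (or asymptotically near) the origin, and the centroid identity you invoke is the quadratic one while the statement concerns $L^p$-optimal quantizers with $p\ge2$. Since you yourself defer both the extraction of the constant and the matching lower bounds, the reader should be directed to \cite{pages2012asymptotics} for the complete argument.
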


We give now the definition of the \textit{radially controlled} distribution, which will be useful to  \textit{control} the convergence rate of the density function $f(x)$ to 0 when $x$ converges in every direction to infinity.
\begin{definition}\label{deftail}
Let $\mu\in\mathcal{P}_{2}(\mathbb{R}^{d})$ be absolutely continuous with respect to the Lebesgue measure $\lambda_{d}$ on $\mathbb{R}^{d}$  having a continuous density function $f$.  We call  $\mu$ is {\color{black}\textit{$k$-radially controlled on $\mathbb{R}^d$}}
if there exists $A>0$ and a {continuous  non-increasing} function $g: \mathbb{R}_{+}\rightarrow \mathbb{R}_{+}$ such that 
\[\forall \xi\in\mathbb{R}^{d}, \left|\xi\right|\geq A,\;\;\;\;\;\;\;\;\; f(\xi)\leq g(\left|\xi\right|) \;\text{and}\;\int_{\mathbb{R}_{+}}x^{d-1+k}g(x)dx<+\infty.\] 
\end{definition}
\noindent Note that the $c$-th polynomial tail with {\color{black}$c>k+d$} and the hyper-exponential tail are sufficient conditions to satisfy the {\color{black}$k$}-radially controlled assumption. A typical example of hyper-exponential tail is the multidimensional normal distribution $\mathcal{N}(m, \Sigma)$.

\smallskip

For $\mu,\nu\in \mathcal{P}_{2}(\mathbb{R}^{d})$ and 
for every $K\in\mathbb{N}^{*}$, we have
\begin{equation}\label{control}
\left\Vert e_{K, \mu}-e_{K, \nu}\right\Vert_{\sup}
\coloneqq\sup_{x\in(\mathbb{R}^{d})^{K}}\left|e_{K,\,\mu}(x)-e_{K,\,\nu}(x)\right|\leq\mathcal{W}_{2}(\mu, \nu),
\end{equation}
by a simple application of the triangle inequality for the $L^{2}-$norm (see e.g. \cite{graf2000foundations} Formula (4.4) and Lemma 3.4).
Hence, if $(\mu_{n})_{n\geq1}$ is a sequence in $\mathcal{P}_{2}(\mathbb{R}^{d})$ converging for the $\mathcal{W}_{2}$-distance to $\mu_{\infty}\in\mathcal{P}_{2}(\mathbb{R}^{d})$, then for every $K\in\mathbb{N}^{*}$,
\begin{equation}\label{06}
\left\Vert e_{K,\,\mu_{n}}-e_{K,\,\mu_{\infty}}\right\Vert_{\sup}\leq\mathcal{W}_{2}(\mu_{n},\mu_{\infty})\xrightarrow{n\rightarrow+\infty}0.
\end{equation}

\section{General Case}\label{section2}

\noindent In this section, we first establish in Theorem \ref{cvgratedistortion} a non-asymptotic upper bound  of the quantization performance $\mathcal{D}_{K, \,\mu_{\infty}}(x^{(n)})-\inf_{x\in(\mathbb{R}^{d})^{K}}\mathcal{D}_{K, \mu_{\infty}}(x)$. Then we discuss the convergence rate of the optimal quantizer sequence in Theorem \ref{cvgrate}.

{

\begin{theorem}[Non-asymptotic upper bound for the quantization performance]\label{cvgratedistortion} 

Let $K\in\mathbb{N}^{*}$ be the quantization level.  For every $n\in\mathbb{N}^{*}\cup\{\infty\}$, let $\mu_{n}\in\mathcal{P}_{2}(\mathbb{R}^{d})$ with $\mathrm{card}\big(\mathrm{supp}(\mu_{n})\big)\geq K$. Assume that $\mathcal{W}_{2}(\mu_{n}, \mu_{\infty})\rightarrow0$ as $n\rightarrow+\infty$.  For every $n\in\mathbb{N}^{*}$, let $x^{(n)}$ be an optimal quantizer of $\mu_{n}$. Then 
\begin{align}
&\quad \mathcal{D}_{K, \mu_{\infty}}(x^{(n)})-\inf_{x\in(\mathbb{R}^{d})^{K}}\mathcal{D}_{K, \mu_{\infty}}(x) \leq4e_{K, \mu_{\infty}}^{*}\mathcal{W}_{2}(\mu_{n}, \mu_{\infty})+4\mathcal{W}_{2}^{2}(\mu_{n}, \mu_{\infty}),&\nonumber
\end{align}
where $e_{K, \mu_{\infty}}^{*}$ is the optimal error of $\mu_{\infty}$ at level $K$ defined in (\ref{optiquantizationerror}). 
\end{theorem}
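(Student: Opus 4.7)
The plan is to reduce the whole thing to two applications of the Lipschitz-type bound (\ref{control}), combined with the identity $a^{2}-b^{2}=(a-b)(a+b)$, since $\mathcal{D}_{K,\mu}=e_{K,\mu}^{2}$.

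First, I would bound the square root of the quantization performance, namely control $e_{K,\mu_{\infty}}(x^{(n)})-e^{*}_{K,\mu_{\infty}}$. Using (\ref{control}) at the point $x^{(n)}$ and the fact that $x^{(n)}$ is optimal for $\mu_{n}$ (so $e_{K,\mu_{n}}(x^{(n)})=e^{*}_{K,\mu_{n}}$), I get
\[
e_{K,\mu_{\infty}}(x^{(n)})\leq e_{K,\mu_{n}}(x^{(n)})+\mathcal{W}_{2}(\mu_{n},\mu_{\infty})=e^{*}_{K,\mu_{n}}+\mathcal{W}_{2}(\mu_{n},\mu_{\infty}).
\]
Applying (\ref{06}) (itself an immediate consequence of (\ref{control}) by passing to the infimum) yields $e^{*}_{K,\mu_{n}}\leq e^{*}_{K,\mu_{\infty}}+\mathcal{W}_{2}(\mu_{n},\mu_{\infty})$, hence
\[
0\leq e_{K,\mu_{\infty}}(x^{(n)})-e^{*}_{K,\mu_{\infty}}\leq 2\mathcal{W}_{2}(\mu_{n},\mu_{\infty}).
\]

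Second, I would convert this bound on the difference of errors into a bound on the difference of distortions by the factorization
\[
\mathcal{D}_{K,\mu_{\infty}}(x^{(n)})-\inf_{x\in(\mathbb{R}^{d})^{K}}\mathcal{D}_{K,\mu_{\infty}}(x)=\bigl(e_{K,\mu_{\infty}}(x^{(n)})-e^{*}_{K,\mu_{\infty}}\bigr)\bigl(e_{K,\mu_{\infty}}(x^{(n)})+e^{*}_{K,\mu_{\infty}}\bigr).
\]
The first factor is already controlled by $2\mathcal{W}_{2}(\mu_{n},\mu_{\infty})$, and for the second factor I use $e_{K,\mu_{\infty}}(x^{(n)})\leq e^{*}_{K,\mu_{\infty}}+2\mathcal{W}_{2}(\mu_{n},\mu_{\infty})$ to obtain $e_{K,\mu_{\infty}}(x^{(n)})+e^{*}_{K,\mu_{\infty}}\leq 2e^{*}_{K,\mu_{\infty}}+2\mathcal{W}_{2}(\mu_{n},\mu_{\infty})$. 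Multiplying the two bounds produces exactly
\[
4e^{*}_{K,\mu_{\infty}}\mathcal{W}_{2}(\mu_{n},\mu_{\infty})+4\mathcal{W}_{2}^{2}(\mu_{n},\mu_{\infty}),
\]
which is the claimed inequality.

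There is no real obstacle here: the assumption $\mathrm{card}(\mathrm{supp}(\mu_{n}))\geq K$ is only used to ensure the existence of the optimal quantizer $x^{(n)}$ (via Proposition \ref{relation}(ii)), and the $\mathcal{W}_{2}$-convergence assumption is not actually needed for the non-asymptotic inequality itself, only to make the statement meaningful in the intended asymptotic regime. The whole argument is a short, self-contained chain that leverages (\ref{control}) twice, so the main conceptual point to emphasize in the write-up is the ``$a^{2}-b^{2}$'' step, since it explains why the upper bound is naturally expressed as a first-order term in $\mathcal{W}_{2}$ (with coefficient involving $e^{*}_{K,\mu_{\infty}}$) plus a second-order term.
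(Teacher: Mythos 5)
Your proof is correct and follows essentially the same route as the paper: both first show $e_{K,\mu_{\infty}}(x^{(n)})-e^{*}_{K,\mu_{\infty}}\leq 2\mathcal{W}_{2}(\mu_{n},\mu_{\infty})$ via the sup-norm bound (\ref{control}) applied once at $x^{(n)}$ and once through the difference of optimal errors, and then convert to the distortion via the factorization $e^{2}(x^{(n)})-(e^{*})^{2}=(e(x^{(n)})-e^{*})(e(x^{(n)})+e^{*})$ with the same bounds on each factor. The only cosmetic difference is that you pass through $e^{*}_{K,\mu_{n}}$ explicitly by taking infima in (\ref{06}), whereas the paper phrases that step as a general inequality comparing the optimal errors of two measures; the content is identical.
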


\begin{proof}[Proof of Theorem \ref{cvgratedistortion}]

Let $x^{(\infty)}$ be an optimal quantizer of $\mu_{\infty}$. Remark that here we do not need that $x^{(\infty)}$ is the limit of $x^{(n)}$. 
{\color{black}First, we have  (see e.g. Corollary 4.1 in \cite{linder2002principle})
\begin{align}\label{perforerror}
e_{K, \mu_{\infty}}(x^{(n)})-e_{K, \mu_{\infty}}^{*} &= e_{K, \mu_{\infty}}(x^{(n)})-e_{K, \mu_{n}}(x^{(n)})+e_{K, \mu_{n}}(x^{(n)})-e_{K, \mu_{\infty}}(x^{(\infty)})\nonumber\\
&\leq 2\left\Vert e_{K, \mu_{\infty}}- e_{K, \mu_{n}} \right\Vert_{\sup}\leq 2\mathcal{W}_{2}(\mu_{n}, \mu_{\infty}),
\end{align}
where the first inequality is due to the fact that for any $\mu, \nu\in\mathcal{P}_{2}(\mathbb{R}^{d})$ with respective $K$-level optimal quantizers $x^{\mu}$ and $x^{\nu}$, if $e_{K, \mu}(x^{\mu})\geq e_{K, \nu}(x^{\nu})$, we have
\begin{align}
\left|e_{K, \mu}(x^{\mu})-e_{K, \nu}(x^{\nu})\right|=e_{K, \mu}(x^{\mu})- e_{K, \nu}(x^{\nu})\leq e_{K, \mu}(x^{\nu})- e_{K, \nu}(x^{\nu})\leq \left\Vert e_{K, \mu_{\infty}}- e_{K, \mu_{n}} \right\Vert_{\sup}\nonumber. 
\end{align}
If $e_{K, \mu}(x^{\mu})\leq e_{K, \nu}(x^{\nu})$, we have the same inequality by the same reasoning. }

\smallskip
Moreover, 
\begin{align}
\mathcal{D}_{K, \mu_{\infty}}(x^{(n)})&-\inf_{x\in(\mathbb{R}^{d})^{K}}\mathcal{D}_{K, \mu_{\infty}}(x)=\mathcal{D}_{K, \mu_{\infty}}(x^{(n)})-\mathcal{D}_{K, \mu_{\infty}}(x^{(\infty)})\nonumber\\
&\leq \big[e_{K, \mu_{\infty}}(x^{(n)})+e_{K, \mu_{\infty}}(x^{(\infty)})\big]\big(e_{K, \mu_{\infty}}(x^{(n)})-e_{K, \mu_{\infty}}(x^{(\infty)})\big)\nonumber\\
&\leq2\big[e_{K, \mu_{\infty}}(x^{(n)})-e_{K, \mu_{\infty}}(x^{(\infty)})+2e_{K, \mu_{\infty}}(x^{(\infty)})\big]\cdot\mathcal{W}_{2}(\mu_{n}, \mu_{\infty}) \quad\text{\big(by (\ref{perforerror})\big)}\nonumber\\
&\leq4\big[\mathcal{W}_{2}(\mu_{n}, \mu_{\infty})+e_{K, \mu_{\infty}}^{*}\big]\cdot\mathcal{W}_{2}(\mu_{n}, \mu_{\infty})\quad\text{\big(by (\ref{perforerror})\big)}\nonumber\\
&\leq4e_{K, \mu_{\infty}}^{*}\mathcal{W}_{2}(\mu_{n}, \mu_{\infty})+4\mathcal{W}_{2}^{2}(\mu_{n}, \mu_{\infty}).\hfill\qedhere\nonumber
\end{align}
\end{proof}
}

Let $B(x, r)$ denote the ball centered at $x$ with radius $r$. Recall that $F_{K}\coloneqq \big\{ x=(x_{1}, ..., x_{K})\in(\mathbb{R}^{d})^{K}\;\big|\; x_{i}\neq x_{j}, \text{ if } i\neq j\big\}
$. Remark that if $x\in F_{K}$,  then every $y\in B\big(x, \frac{1}{3}\min_{1\leq i,j\leq K, i\neq j} \left|x_{i}-x_{j}\right|\big)$ still lies in $F_{K}$.
In the following theorem, we give an estimate of the convergence rate of the optimal quantizer sequence $x^{(n)}, n\in\mathbb{N}^{*}$.

\begin{theorem}[Convergence rate of optimal quantizers]\label{cvgrate}
Let $K\in\mathbb{N}^{*}$ be the quantization level.  For every $n\in\mathbb{N}^{*}\cup\{\infty\}$, let $\mu_{n}\in\mathcal{P}_{2}(\mathbb{R}^{d})$ with $\mathrm{card}\big(\mathrm{supp}(\mu_{n})\big)\geq K$.  Assume that $\mathcal{W}_{2}(\mu_{n}, \mu_{\infty})\rightarrow0$ as $n\rightarrow+\infty$.  For every $n\in\mathbb{N}^{*}$, let $x^{(n)}$ be an optimal quantizer of $\mu_{n}$ and let $G_{K}(\mu_{\infty})$ denote the set of all optimal quantizers of $\mu_{\infty}$.  If the following assumptions hold
\begin{enumerate}[(a)]
\item  {\color{black}the distortion function $\mathcal{D}_{K, \mu_{\infty}}$ is twice differentiable at every $x\in F_{K}$;}
\item {\color{black}$\mathrm{card}\big(G_{K}(\mu_{\infty})\big)<+\infty$;}
\item for every $x^{(\infty)}\in G_{K}(\mu_{\infty})$, the Hessian matrix of $\mathcal{D}_{K, \mu_{\infty}}$, denoted by $H_{\mathcal{D}_{K, \mu_{\infty}}}$, is positive definite in the neighbourhood of $x^{(\infty)}$ having eigenvalues lower bounded by some $\lambda^{*}>0$, 
\end{enumerate}

\noindent then, for $n$ large enough, 
\[d\big(x^{(n)}, G_{K}(\mu_{\infty})\big)^{2}\leq \frac{8}{\lambda^{*}}e_{K, \mu_{\infty}}^{*}\cdot\mathcal{W}_{2}(\mu_{n},\mu_{\infty})+\frac{8}{\lambda^{*}}\cdot\mathcal{W}^{2}_{2}(\mu_{n},\mu_{\infty}).\] 
\end{theorem}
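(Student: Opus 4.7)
The plan is to combine Pollard's Theorem with a local Taylor expansion around the optimal quantizers of $\mu_\infty$, and then plug in the non-asymptotic bound from Theorem \ref{cvgratedistortion}. First, by Pollard's Theorem the sequence $(x^{(n)})_{n\geq 1}$ is bounded and every limit point lies in $G_K(\mu_\infty)$. Since by assumption $G_K(\mu_\infty)=\{y_{1},\dots,y_{M}\}$ is finite, one can choose $\varepsilon_{0}>0$ so small that the balls $B(y_{i},\varepsilon_{0})$ are pairwise disjoint, all contained in $F_{K}$ (using the remark that $F_K$ contains a ball around each of its points), and such that on each $B(y_{i},\varepsilon_{0})$ the Hessian $H_{\mathcal{D}_{K,\mu_\infty}}$ is positive definite with eigenvalues $\ge\lambda^{*}$. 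Then for all $n$ larger than some $n_{0}$, $x^{(n)}$ lies in $\bigcup_{i=1}^{M}B(y_{i},\varepsilon_{0})$, so there exists an index $i(n)$ with $x^{(n)}\in B(y_{i(n)},\varepsilon_{0})$, and in particular
\[
d\bigl(x^{(n)},G_{K}(\mu_\infty)\bigr)=\bigl|x^{(n)}-y_{i(n)}\bigr|.
\]

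Next I would perform a second-order Taylor expansion of $\mathcal{D}_{K,\mu_\infty}$ at the optimal quantizer $y_{i(n)}$. Since $y_{i(n)}\in F_{K}$ is a global minimizer and $\mathcal{D}_{K,\mu_\infty}$ is twice differentiable there, the gradient vanishes, so with integral form of the remainder
\[
\mathcal{D}_{K,\mu_\infty}(x^{(n)})-\mathcal{D}_{K,\mu_\infty}(y_{i(n)})=\int_{0}^{1}(1-t)\bigl\langle x^{(n)}-y_{i(n)},\,H_{\mathcal{D}_{K,\mu_\infty}}\bigl(y_{i(n)}+t(x^{(n)}-y_{i(n)})\bigr)(x^{(n)}-y_{i(n)})\bigr\rangle\,dt.
\]
Because the segment $[y_{i(n)},x^{(n)}]$ lies in $B(y_{i(n)},\varepsilon_{0})$ and the Hessian has eigenvalues $\geq\lambda^{*}$ throughout that ball, this yields
\[
\mathcal{D}_{K,\mu_\infty}(x^{(n)})-\inf_{x\in(\mathbb{R}^{d})^{K}}\mathcal{D}_{K,\mu_\infty}(x)\;\geq\;\frac{\lambda^{*}}{2}\,\bigl|x^{(n)}-y_{i(n)}\bigr|^{2}=\frac{\lambda^{*}}{2}\,d\bigl(x^{(n)},G_{K}(\mu_\infty)\bigr)^{2}.
\]

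Finally, inserting the non-asymptotic bound from Theorem \ref{cvgratedistortion} on the left-hand side and dividing by $\lambda^{*}/2$ yields
\[
d\bigl(x^{(n)},G_{K}(\mu_\infty)\bigr)^{2}\leq\frac{8}{\lambda^{*}}\,e_{K,\mu_\infty}^{*}\,\mathcal{W}_{2}(\mu_n,\mu_\infty)+\frac{8}{\lambda^{*}}\,\mathcal{W}_{2}^{2}(\mu_n,\mu_\infty),
\]
valid for all $n\geq n_{0}$. The only delicate point, and the main obstacle, is the preparatory step: one has to argue that $\varepsilon_{0}$ can indeed be chosen uniformly in $i$, which requires both finiteness of $G_K(\mu_\infty)$ (to make a finite choice of radii) and the local positive-definiteness hypothesis around each $y_{i}$, and then use Pollard's Theorem to confirm that $x^{(n)}$ eventually enters one of the disjoint $B(y_{i},\varepsilon_{0})$. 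Everything else reduces to Taylor with integral remainder plus Theorem \ref{cvgratedistortion}.
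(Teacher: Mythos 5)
Your proof is correct and follows essentially the same route as the paper: Pollard's Theorem to localize $x^{(n)}$ near $G_{K}(\mu_{\infty})$, a second-order Taylor expansion at a nearby optimal quantizer (where the gradient vanishes) to bound $\lambda^{*}\,d\bigl(x^{(n)},G_{K}(\mu_{\infty})\bigr)^{2}$ by twice the performance gap, and then Theorem \ref{cvgratedistortion}. The only (harmless) differences are that you use the integral form of the remainder where the paper uses a Lagrange-type point $\zeta^{(n)}$ on the segment, and that you handle the ``for $n$ large enough'' uniformity via disjoint balls around the finitely many optimal quantizers rather than the paper's extraction of a convergent subsequence.
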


{\color{black}
\begin{remark}
Section 3 provides a detailed discussion of the conditions in Theorem \ref{cvgrate} and their relation between each other. 

\noindent (1) First,  
in Section \ref{positivedef}, 
we establish {\color{black}that if $\mu_{\infty}$ is $1$-radially controlled, then its distortion function}  $\mathcal{D}_{K, \mu_{\infty}}$ is twice continuously differentiable at every $x\in F_{K}$ and give an exact formula of the Hessian matrix $H_{\mathcal{D}_{K, \mu_{\infty}}}(x)$ in Proposition \ref{contihg}. Thus, one may obtain Condition $(c)$ either by an explicit computation or by numerical methods. Moreover, 
if $H_{\mathcal{D}_{K, \mu}}$ is positive definite at $x\in F_{K}$, it is also positive definite in its neighbourhood.   In Section \ref{logcondim1}, we establish 
several sufficient conditions for the positive definiteness of the Hessian matrix $H_{\mathcal{D}_{K, \mu_{\infty}}}$ in the neighbourhood of $x^{(\infty)}\in G_{K}(\mu_{\infty})$ in one dimension. 

\smallskip
\noindent (2) If the distribution $\mu_{\infty}$ is $1$-radially controlled,  a necessary condition for  Condition $(c)$ is Condition $(b)$ 
(see Lemma \ref{defpo}). Thus, if $\mathrm{card}\big(G_{K}(\mu_{\infty})\big)=+\infty$, it is more reasonable to consider the non-asymtotic upper bound of the performance (Theorem \ref{cvgratedistortion}) to study the convergence rate of the optimal quantization. A typical example is the standard multidimensional normal distribution $\mu_{\infty}=\mathcal{N}(0, I_{d})$: it is $1$-radially controlled and any rotation of an optimal quantizer $x$ is still  optimal  so that  $\mathrm{card}\big(G_{K}(\mu_{\infty})\big)=+\infty$. 
\end{remark}}

\smallskip
\begin{proof}[Proof of Theorem \ref{cvgrate}] Since the quantization level $K$ is fixed throughout the proof, we will drop  the subscripts $K$ and $\mu$ of the distortion function $\mathcal{D}_{K,\,\mu}$ and we will denote by $\mathcal{D}_{n}$ (\textit{respectively,} $\mathcal{D}_{\infty})$ the distortion function of $\mu_{n}$ (\textit{resp.}  $\mu_{\infty}$). 

After Pollard's theorem, $(x^{(n)})_{n\in\mathbb{N}^{*}}$ is bounded and any limiting point of $x^{(n)}$ lies in $G_{K}(\mu_{\infty})$. We may assume that, up to {\color{black}the extraction of a subsequence of} $x^{(n)}$, still denoted by $x^{(n)}$, we have $x^{(n)}\rightarrow x^{(\infty)}\in G_{K}(\mu_{\infty})$. Hence  $d\big(x^{(n)}, G_{K}(\mu_{\infty})\big)\leq \left|x^{(n)}-x^{(\infty)}\right|$.

{\color{black}Proposition \ref{relation} implies that $x^{(\infty)}\in F_{K}$. As $\mathcal{D}_{\infty}$ is twice differentiable at $x^{(\infty)}$,} 
the second order Taylor expansion of $\mathcal{D}_{\infty}$ at $x^{(\infty)}$ reads: 
\[\mathcal{D}_{\infty}(x^{(n)})=\mathcal{D}_{\infty}(x^{(\infty)})+\big\langle\nabla\mathcal{D}_{\infty}(x^{(\infty)})\mid x^{(n)}-x^{(\infty)}\big\rangle+\frac{1}{2} H_{\mathcal{D}_{\infty}}(\zeta^{(n)})(x^{(n)}-x^{(\infty)})^{\otimes2},\]
where $H_{\mathcal{D}_{\infty}}$ denotes the Hessian matrix of $\mathcal{D}_{\infty}$, $\zeta^{(n)}$ lies in the geometric segment $(x^{(n)}, x^{(\infty)})$ and for a matrix $A$ and a {vector} $u$, $Au^{\otimes2}$ stands for $u^{T}Au$. 

As $x^{(\infty)}\in G_{K}(\mu_{\infty})=\text{argmin}\,\mathcal{D}_{\infty}$ and $\text{card}\big(\text{supp}(\mu_{\infty})\big)\geq K$, one has $\nabla\mathcal{D}_{\infty}(x^{(\infty)})=0$. 
Hence
\begin{equation}\label{errrr}
\mathcal{D}_{\infty}(x^{(n)})-\mathcal{D}_{\infty}(x^{(\infty)})=\frac{1}{2}H_{\mathcal{D}_{\infty}}(\zeta^{(n)})(x^{(n)}-x^{(\infty)})^{\otimes2}.
\end{equation}
It follows from Theorem \ref{cvgratedistortion} that 
\begin{align}\label{chan}
H_{\mathcal{D}_{\infty}}(\zeta^{(n)})(x^{(n)}-x^{(\infty)})^{\otimes 2}&=2\big(\mathcal{D}_{\infty}(x^{(n)})-\mathcal{D}_{\infty}(x^{(\infty)})\big)\nonumber\\
&\leq 8e_{K, \mu_{\infty}}^{*}\mathcal{W}_{2}(\mu_{n}, \mu_{\infty})+8\mathcal{W}_{2}^{2}(\mu_{n}, \mu_{\infty}).
\end{align}

By {\color{black}Condition (c)}, $H_{\mathcal{D}_{\infty}}$ is assumed to be positive definite in the neighbourhood of all $x^{(\infty)}\in G_{K}(\mu_{\infty})$ having eigenvalues lower bounded by some $\lambda^{*}>0$. As $\zeta^{(n)}$ lies in the geometric segment $(x^{(n)}, x^{(\infty)})$ and $x^{(n)}\rightarrow x^{(\infty)}$,  
there exists an $n_{0}(x^{(\infty)})$ such that for all $n\geq n_{0}$, $H_{\mathcal{D}_{\infty}}(\zeta^{(n)})$ is a positive definite matrix. It follows that, for $n\geq n_{0}$,
\begin{align}
\lambda^{*}\left| x^{(n)}-x^{(\infty)}\right|^{2}&\leq H_{\mathcal{D}_{\infty}}(\zeta^{(n)})(x^{(n)}-x^{(\infty)})^{\otimes 2}\nonumber\\
&\leq 8e_{K, \mu_{\infty}}^{*}\mathcal{W}_{2}(\mu_{n}, \mu_{\infty})+8\mathcal{W}_{2}^{2}(\mu_{n}, \mu_{\infty}).\nonumber
\end{align}
Thus, one can directly conclude by multiplying  at each side of the above inequality by $\frac{1}{\lambda^{*}}$. 
\end{proof}

{ Based on conditions in Theorem \ref{cvgrate}, if  we know the exact limit of the optimal quantizer sequence $x^{(n)}$, 
we have the following result whose proof is similar to that of Theorem \ref{cvgrate}.} 

\begin{corollary}\label{corrate}
Let $K\in\mathbb{N}^{*}$ be the quantization level.  For every $n\in\mathbb{N}^{*}\cup\{\infty\}$, let $\mu_{n}\in\mathcal{P}_{2}(\mathbb{R}^{d})$ with $\mathrm{card}\big(\mathrm{supp}(\mu_{n})\big)\geq K$.  Assume that $\mathcal{W}_{2}(\mu_{n}, \mu_{\infty})\rightarrow0$ as $n\rightarrow+\infty$. Let $x^{(n)}\in\mathrm{argmin}\;\mathcal{D}_{K,\,\mu_{n}}$ such that $\lim_{n} x^{(n)}\rightarrow x^{(\infty)}$. If the Hessian matrix of $\mathcal{D}_{K,\,\mu_{\infty}}$ is  positive definite  in the neighbourhood of $x^{(\infty)}$, then, for $n$ large enough,
 \[\left| x^{(n)}-x^{(\infty)}\right|^{2} \leq C^{(1)}_{\mu_{\infty}}\cdot\mathcal{W}_{2}(\mu_{n}, \mu_{\infty})+C^{(2)}_{\mu_{\infty}}\cdot\mathcal{W}_{2}^{2}(\mu_{n}, \mu_{\infty}),\]
where $C^{(1)}_{\mu_{\infty}}$ and $C^{(2)}_{\mu_{\infty}}$ are real constants only depending on $\mu_{\infty}$. 
\end{corollary}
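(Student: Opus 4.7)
The plan is to closely mimic the proof of Theorem \ref{cvgrate}, exploiting two simplifications afforded by the stronger hypothesis: we are given the exact limit $x^{(\infty)}$ (so no subsequence extraction is needed), and we only require positive definiteness of the Hessian in a neighbourhood of this single point (so we bypass the finiteness of $G_K(\mu_\infty)$).

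First I would verify that $x^{(\infty)}\in G_K(\mu_\infty)$. This follows directly from Pollard's theorem applied to the full sequence $(x^{(n)})$, or alternatively from passing to the limit in the inequality $e_{K,\mu_n}(x^{(n)})\le e_{K,\mu_n}(y)$ using the uniform estimate (\ref{06}). By Proposition \ref{relation}(ii), $x^{(\infty)}\in F_K$, which is an open subset of $(\mathbb{R}^d)^K$; together with the hypothesis of positive definiteness of $H_{\mathcal{D}_{K,\mu_\infty}}$ in a neighbourhood of $x^{(\infty)}$ (which implicitly requires twice differentiability there), this sets up a local quadratic control.

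Next, since $x^{(n)}\to x^{(\infty)}$, I would choose an open ball $B(x^{(\infty)},r)\subset F_K$ on which $H_{\mathcal{D}_{K,\mu_\infty}}$ is positive definite, shrink it if necessary, and fix a constant $\lambda^{*}>0$ lower-bounding every eigenvalue of $H_{\mathcal{D}_{K,\mu_\infty}}(y)$ for $y$ in the closed ball of radius $r/2$ (by compactness together with continuity of the smallest eigenvalue, continuity of the Hessian being provided by Proposition \ref{contihg} under mild tail conditions; the hypothesis may equivalently be read as providing such a $\lambda^{*}$ directly). For $n$ large enough, both $x^{(n)}$ and the segment $[x^{(n)},x^{(\infty)}]$ lie inside this closed ball. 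The second order Taylor formula, combined with $\nabla \mathcal{D}_{K,\mu_\infty}(x^{(\infty)})=0$ (since $x^{(\infty)}$ is an interior minimizer of $\mathcal{D}_{K,\mu_\infty}$), then yields
\[
\mathcal{D}_{K,\mu_\infty}(x^{(n)})-\mathcal{D}_{K,\mu_\infty}(x^{(\infty)})=\tfrac{1}{2}H_{\mathcal{D}_{K,\mu_\infty}}(\zeta^{(n)})(x^{(n)}-x^{(\infty)})^{\otimes 2}\ge \tfrac{\lambda^{*}}{2}\left|x^{(n)}-x^{(\infty)}\right|^{2}
\]
for some $\zeta^{(n)}$ on the segment.

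Finally, I would apply Theorem \ref{cvgratedistortion} to control the left-hand side by $4e_{K,\mu_\infty}^{*}\mathcal{W}_2(\mu_n,\mu_\infty)+4\mathcal{W}_2^{2}(\mu_n,\mu_\infty)$. Combining the two inequalities and multiplying by $2/\lambda^{*}$ delivers the stated bound with
\[
C^{(1)}_{\mu_\infty}=\frac{8\,e^{*}_{K,\mu_\infty}}{\lambda^{*}},\qquad C^{(2)}_{\mu_\infty}=\frac{8}{\lambda^{*}},
\]
both depending only on $\mu_\infty$ (through $e^{*}_{K,\mu_\infty}$ and the local Hessian, $K$ being fixed). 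The only mild obstacle is producing the uniform lower bound $\lambda^{*}$ on the eigenvalues along $\zeta^{(n)}$, but this is a routine compactness argument once a neighbourhood of $x^{(\infty)}$ of positive definiteness is fixed.
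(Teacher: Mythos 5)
Your proposal is correct and follows essentially the same route as the paper: the paper itself only remarks that the proof of Corollary \ref{corrate} is "similar to that of Theorem \ref{cvgrate}", and your argument is precisely that proof (Taylor expansion at the stationary point $x^{(\infty)}\in F_{K}$, uniform eigenvalue lower bound $\lambda^{*}$ on a neighbourhood, and Theorem \ref{cvgratedistortion} to control the distortion gap), yielding the same constants $C^{(1)}_{\mu_\infty}=8e^{*}_{K,\mu_\infty}/\lambda^{*}$ and $C^{(2)}_{\mu_\infty}=8/\lambda^{*}$.
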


\section{Hessian matrix $H_{\mathcal{D}_{K,\,\mu}}$ of {the} distortion function $\mathcal{D}_{K,\,\mu}$}\label{positivedef}

\noindent Let $\mu\in\mathcal{P}_{2}(\mathbb{R}^{d})$ with $\mathrm{card}\big(\mathrm{supp}(\mu)\big)\geq K$ and let $x^{*}$ be an optimal quantizer of $\mu$ at level $K$. In Section \ref{continuityhg}, we show conditions under which  the distortion function $\mathcal{D}_{K, \mu}$ is twice differentiable and give 
the exact formula {of its} Hessian matrix $H_{\mathcal{D}_{K,\,\mu}}$. 
In Section \ref{logcondim1}, we give {several criterions} for the positive definiteness of the Hessian matrix $H_{\mathcal{D}_{K,\,\mu}}$ in the neighbourhood of an optimal quantizer $x^{*}$ in dimension 1. 

\subsection{Hessian matrix $H_{\mathcal{D}_{K,\,\mu}}$ on $\mathbb{R}^{d}$}\label{continuityhg}

\noindent If $\mu$ is absolutely continuous with  respect to  the Lebesgue measure $\lambda_{d}$ on $\mathbb{R}^{d}$ with the density function $f$, then the distortion function $\mathcal{D}_{K,\,\mu}$ is differentiable (see \cite{pages1998space})  at all point $x=(x_{1}, ..., x_{K})\in F_{K}$ with
\begin{equation}\label{gradg}
\frac{\partial\mathcal{D}_{K,\,\mu}}{\partial x_{i}} (x)=2\int_{V_{i}(x)}(x_{i}-\xi)f(\xi)\lambda_{d}(d\xi), \;\;\;\text{for}\;i=1,...,K.
\end{equation}

\noindent In the following Proposition, we give a criterion for the twice differentiability of the distortion function $\mathcal{D}_{K, \mu}$. 

\begin{proposition}\label{contihg}
Let $\mu\in\mathcal{P}_{2}(\mathbb{R}^{d})$ be absolutely continuous with  respect to the Lebesgue measure $\lambda_{d}$ on $\mathbb{R}^{d}$ {with a continuous density function $f$}. If $\mu$ is $1$-radially controlled, then 
\begin{enumerate}[$(i)$]
\item the distortion function $\mathcal{D}_{K,\,\mu}$ is twice differentiable at every $x\in F_{K}$ and the Hessian matrix $H_{\mathcal{D}_{K, \mu}}(x)=\Big[\frac{\partial^{2}\mathcal{D}_{K,\,\mu}}{\partial x_{j}\partial x_{i}}(x)\Big]_{1\leq i\leq j\leq K}$ is defined by
\small
\begin{flalign}\label{deriveeseconde1}
&\frac{\partial^{2}\mathcal{D}_{K,\,\mu}}{\partial x_{j}\partial x_{i}}(x)=2\int_{V_{i}(x)\cap V_{j}(x)}(x_{i}-\xi)\otimes(x_{j}-\xi)\cdot  \frac{1}{\left| x_{j}-x_{i}\right|}f(\xi)\lambda_{x}^{ij}(d\xi), \;\;\;\text{if $j\neq i$,}&\\
\label{deriveeseconde2}
&\frac{\partial^{2}\mathcal{D}_{K,\,\mu}}{\partial x_{i}^{2}}(x)=2\Big[\mu\big(V_{i}(x)\big)\mathrm{I}_{d} - \sum_{\substack{i\neq j \\ 1\leq j\leq K}}\int_{V_{i}(x)\cap V_{j}(x)}(x_{i}-\xi)\otimes(x_{i}-\xi)\cdot  \frac{1}{\left| x_{j}-x_{i}\right|}f(\xi)\lambda_{x}^{ij}(d\xi)\Big], &
\end{flalign}
\normalsize
where in (\ref{deriveeseconde1}) and (\ref{deriveeseconde2}), $u\otimes v\coloneqq[u^{i}v^{j}]_{1\leq i, j \leq d}$  for any two vectors $u=(u^{1}, ..., u^{d})$ and $v=(v^{1}, ... , v^{d})$ in $\mathbb{R}^{d}$;
\item  every element $\frac{\partial^{2}\mathcal{D}_{K,\,\mu}}{\partial x_{j}\partial x_{i}}$ of the Hessian matrix  $H_{\mathcal{D}_{K, \mu}}$  is  continuous at every $x\in F_{K}$. 
\end{enumerate}
\end{proposition}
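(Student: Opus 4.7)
The plan is to differentiate the gradient formula (\ref{gradg}) directly, viewing $x\mapsto \int_{V_i(x)}(x_i-\xi)f(\xi)\lambda_d(d\xi)$ as an integral over a domain with smoothly moving faces and applying a Hadamard/Reynolds transport argument. On $F_K$, write each Voronoi cell as an intersection of closed half-spaces $\{\xi : \phi_{ij}(\xi;x)\le 0\}$ with $\phi_{ij}(\xi;x):=|\xi-x_i|^2-|\xi-x_j|^2$, so that the face $V_i(x)\cap V_j(x)$ lies in the affine hyperplane $H_{ij}(x)=\{\phi_{ij}(\cdot;x)=0\}$. A direct calculation gives $\nabla_\xi\phi_{ij}=2(x_j-x_i)$ (hence $|\nabla_\xi\phi_{ij}|=2|x_j-x_i|$, non-degenerate on $F_K$), $\partial_{x_j^l}\phi_{ij}=2(\xi-x_j)^l$, $\partial_{x_i^l}\phi_{ij}=-2(\xi-x_i)^l$, while $\phi_{ij}$ is independent of all other $x_k$.

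For $j\ne i$ the integrand $(x_i-\xi)f(\xi)$ does not depend on $x_j$, so only the face $H_{ij}(x)$ moves under perturbation of $x_j$, and the classical transport formula
\[
\frac{d}{dt}\int_{\{\phi^t\le 0\}}h(\xi)\,\lambda_d(d\xi)\Big|_{t=0}=-\int_{\{\phi=0\}}h(\xi)\cdot\frac{\partial_t\phi^t|_{t=0}}{|\nabla_\xi\phi|}\,\sigma(d\xi)
\]
applied with $h=(x_i-\xi)f(\xi)$ reproduces (\ref{deriveeseconde1}) after the factor $2$ from (\ref{gradg}) and the identity $-(\xi-x_j)=x_j-\xi$. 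For $j=i$, differentiating under the integral in (\ref{gradg}) contributes $2\mu(V_i(x))\mathrm{I}_d$ (from $\partial_{x_i}(x_i-\xi)=\mathrm{I}_d$), while each face $H_{ij}(x)$, $j\ne i$, moves under perturbation of $x_i$; summing the transport contributions over $j\ne i$ and using $\partial_{x_i^l}\phi_{ij}=-2(\xi-x_i)^l$ reproduces the negative boundary sum in (\ref{deriveeseconde2}).

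The $1$-radially controlled hypothesis supplies the integrability needed at every step and the uniform bounds needed for $(ii)$. For any affine hyperplane $H\subset\mathbb{R}^d$, polar coordinates on $H$ together with $f(\xi)\le g(|\xi|)$ for $|\xi|\ge A$ yield
\[
\int_{H}(1+|\xi|^{2})\,f(\xi)\,\sigma(d\xi)\le C+C'\int_A^{\infty}r^{d}\,g(r)\,dr<+\infty,
\]
which dominates the bilinear factors $(x_i-\xi)\otimes(x_j-\xi)$ and $(x_i-\xi)\otimes(x_i-\xi)$ in (\ref{deriveeseconde1})--(\ref{deriveeseconde2}) as well as the volume term. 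For $(ii)$, fix $x_0\in F_K$ and a small neighbourhood $U$ of $x_0$; pulling each $H_{ij}(x)$, $x\in U$, back to a fixed reference hyperplane via an affine isometry depending continuously on $x$ makes the integrands in (\ref{deriveeseconde1})--(\ref{deriveeseconde2}) jointly continuous in $(x,y)$ and pointwise bounded, uniformly in $x\in U$, by a $y$-only integrable envelope coming from the previous display; dominated convergence then gives continuity.

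The main delicate point will be the rigorous application of the transport step when $f$ is only continuous with radial control rather than smooth or compactly supported, combined with the parallel verification that on the relatively open face $V_i(x)\cap V_j(x)$ the remaining constraints $\phi_{ik}\le 0$, $k\notin\{i,j\}$, stay strict apart from a $\lambda_x^{ij}$-null set of triple Voronoi intersections, so that no extra boundary terms appear. A clean resolution is to mollify $f$ into compactly supported densities $f_n\to f$, apply the classical transport theorem for each $f_n$, and pass to the limit $n\to\infty$ using the radial envelope above; the negligibility of higher-order Voronoi intersections for $x\in F_K$ follows from a transversality argument on the defining hyperplanes.
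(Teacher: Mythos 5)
Your proposal is correct and follows essentially the same route as the paper's Appendix~C: the paper obtains your surface-transport formula from Lemma~11 of \cite{fort1995convergence} on compact sets, extends it by truncating the integrand with a cutoff $\chi_{M}$ (where you mollify $f$ into compactly supported densities), dominates the face integrals through the $1$-radial control exactly as in your polar-coordinate estimate, and proves $(ii)$ by parametrizing the face hyperplanes continuously in $x$ via Gram--Schmidt and applying dominated convergence. The one step worth making explicit in your limit passage is that the derivatives of the approximating functionals must converge \emph{locally uniformly} (the paper invokes Dini's theorem here), not merely pointwise, in order to identify the derivative of the limit with the limiting surface integral.
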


The proof of Proposition \ref{contihg} is postponed to Appendix C. The following lemma shows that under the condition of Proposition \ref{contihg}, 
Condition (c) in  Theorem \ref{cvgrate} implies Condition (b).
\begin{lemma}\label{defpo}
Let $\mu\in\mathcal{P}_{2}(\mathbb{R}^{d})$ be absolutely continuous with the respect to the Lebesgue measure $\lambda_{d}$ on $\mathbb{R}^{d}$ {with a continuous density function $f$}. If {$\mu_{\infty}$ is $1$-radially controlled} and 
$\mathrm{card}\big(G_{K}(\mu_{\infty})\big)=+\infty$, then there exists a point $x\in G_{K}(\mu_{\infty})$  such that the Hessian matrix  $H_{\mathcal{D}_{K, \mu_{\infty}}}$ of $\mathcal{D}_{K, \mu_{\infty}}$  at $x$ has an eigenvalue $0$. 
\end{lemma}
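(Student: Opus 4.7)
The plan is to extract an accumulation point of $G_K(\mu_\infty)$ and exhibit a direction along which the Hessian at that point annihilates a nonzero vector. Since $G_K(\mu_\infty)$ is compact by Proposition \ref{relation}(ii) and infinite by hypothesis, the Bolzano–Weierstrass theorem yields a point $\bar x\in G_K(\mu_\infty)$ and a sequence $(x^{(n)})_{n\ge 1}\subset G_K(\mu_\infty)\setminus\{\bar x\}$ with $x^{(n)}\to \bar x$. By Proposition \ref{relation}(ii), both $\bar x$ and all $x^{(n)}$ belong to the open set $F_K$. Under the $1$-radially controlled assumption, Proposition \ref{contihg} gives that $\mathcal{D}_{K,\mu_\infty}$ is twice continuously differentiable on $F_K$, so in particular $C^2$ on a neighbourhood $U\subset F_K$ of $\bar x$ containing $x^{(n)}$ for all $n$ large enough.

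Next I would use the first-order optimality condition. Since every optimal quantizer lies in $F_K$ and $\mathcal{D}_{K,\mu_\infty}$ is differentiable on $F_K$ with gradient given by (\ref{gradg}), every $y\in G_K(\mu_\infty)$ is a critical point of $\mathcal{D}_{K,\mu_\infty}$. In particular,
\begin{equation*}
\nabla\mathcal{D}_{K,\mu_\infty}(x^{(n)})=0=\nabla\mathcal{D}_{K,\mu_\infty}(\bar x).
\end{equation*}
Set $v_n\coloneqq (x^{(n)}-\bar x)/|x^{(n)}-\bar x|\in(\mathbb R^d)^K$. These are unit vectors, so by compactness of the unit sphere (and upon extracting a subsequence which I still denote $v_n$), one has $v_n\to v$ with $|v|=1$.

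Applying the integral mean value theorem to the $C^1$ vector field $\nabla\mathcal{D}_{K,\mu_\infty}$ on $U$ gives, for $n$ large,
\begin{equation*}
0 \;=\; \nabla\mathcal{D}_{K,\mu_\infty}(x^{(n)})-\nabla\mathcal{D}_{K,\mu_\infty}(\bar x) \;=\; \left(\int_0^1 H_{\mathcal{D}_{K,\mu_\infty}}\bigl(\bar x+t(x^{(n)}-\bar x)\bigr)\,dt\right)(x^{(n)}-\bar x).
\end{equation*}
Dividing by $|x^{(n)}-\bar x|>0$ yields $0=\int_0^1 H_{\mathcal{D}_{K,\mu_\infty}}(\bar x+t(x^{(n)}-\bar x))\,v_n\,dt$. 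The entries of $H_{\mathcal{D}_{K,\mu_\infty}}$ are continuous on $F_K$ by Proposition \ref{contihg}(ii), hence bounded on a small closed ball around $\bar x$; letting $n\to\infty$ and invoking dominated convergence gives
\begin{equation*}
H_{\mathcal{D}_{K,\mu_\infty}}(\bar x)\,v=0,
\end{equation*}
so $v\neq 0$ is an eigenvector with eigenvalue $0$, proving the claim at $\bar x\in G_K(\mu_\infty)$.

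The only mildly delicate point is ensuring we may pass to the limit in the integral form of the mean value theorem, which is why the continuity of the Hessian on $F_K$ supplied by Proposition \ref{contihg}(ii) — rather than mere twice pointwise differentiability — is crucial; everything else is a direct application of compactness of $G_K(\mu_\infty)$, the first-order optimality condition on the open set $F_K$, and a unit-direction extraction argument.
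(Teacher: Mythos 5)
Your proof is correct, and while it follows the same overall strategy as the paper (extract an accumulation point $\bar x$ of the compact infinite set $G_K(\mu_\infty)$, normalize the increments to get a limiting unit direction $v$, and show the Hessian kills that direction), the second-order step is genuinely different. The paper expands the distortion $\mathcal{D}_{K,\mu_\infty}$ itself to second order with a Lagrange-type intermediate point $\zeta^{(k)}$ on the segment, uses that all optimal quantizers share the same value $\mathcal{D}_{K,\mu_\infty}(x^{(k)})=\mathcal{D}_{K,\mu_\infty}(\bar x)$ and that $\nabla\mathcal{D}_{K,\mu_\infty}(\bar x)=0$, and concludes that the quadratic form $H_{\mathcal{D}_{K,\mu_\infty}}(\bar x)\,v^{\otimes 2}=0$; to pass from this to ``$0$ is an eigenvalue'' one implicitly uses that the Hessian at a global minimizer is positive semidefinite (for a general symmetric matrix, $v^{T}Hv=0$ with $|v|=1$ does not force a zero eigenvalue). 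You instead apply the integral mean value theorem to the gradient, exploiting that the first-order condition $\nabla\mathcal{D}_{K,\mu_\infty}=0$ holds at \emph{both} $x^{(n)}$ and $\bar x$, and obtain $H_{\mathcal{D}_{K,\mu_\infty}}(\bar x)\,v=0$ directly, i.e.\ a genuine null eigenvector, with no appeal to semidefiniteness. Both arguments use the same inputs (Proposition \ref{relation} for compactness and $G_K(\mu_\infty)\subset F_K$, Proposition \ref{contihg} for $C^2$ regularity and continuity of the Hessian, needed to pass to the limit); the one point to keep explicit in your version is that the neighbourhood $U$ on which you integrate along segments should be taken as a ball $B(\bar x,\tfrac13\min_{i\neq j}|\bar x_i-\bar x_j|)$ so that it is convex and contained in $F_K$, which the paper's remark before Theorem \ref{cvgrate} guarantees.
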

\begin{proof}[Proof of Lemma \ref{defpo}]
We denote by $H_{\mathcal{D}_{\infty}}$ instead of  $H_{\mathcal{D}_{K, \mu_{\infty}}}$ to simplify the notation. 
Proposition \ref{relation} implies that $G_{K}(\mu_{\infty})$ is a compact set. If $\mathrm{card}\big(G_{K}(\mu_{\infty})\big)=+\infty$, there exists $x, x^{(k)}\in G_{K}(\mu_{\infty}), k\in\mathbb{N}^{*}$ such that $x^{(k)}\rightarrow x$ when $k\rightarrow+\infty$. Set $u_{k}\coloneqq \frac{x^{(k)}-x}{\left|x^{(k)}-x\right|}$, $k\geq1$, then we have $\left| u_{k}\right|=1$ for all $k\in\mathbb{N}^{*}$. Hence, there exists a subsequence $\varphi(k)$ of $k$ such that $u_{\varphi(k)}$ converges to some $\widetilde{u}$ with $\left|\widetilde{u}\right|=1$. 

The Taylor expansion of $\mathcal{D}_{K, \mu_{\infty}}$ at $x$ reads:
\[\mathcal{D}_{K, \mu_{\infty}}(x^{\varphi(k)})=\mathcal{D}_{K, \mu_{\infty}}(x)+\big\langle\nabla \mathcal{D}_{K, \mu_{\infty}}(x)\;\big{|}\; x^{\varphi(k)}-x\big\rangle + \frac{1}{2} H_{\mathcal{D}_{\infty}}(\zeta^{\varphi(k)})(x^{\varphi(k)}-x)^{\otimes 2},\]
where $\zeta^{\varphi(k)}$ lies in the geometric segment $(x^{\varphi(k)}, x)$. Since $x, x^{(k)}, k\in\mathbb{N}^{*}\in G_{K}(\mu_{\infty})$, then $\nabla \mathcal{D}_{K, \mu_{\infty}}(x)=0$ and for any $k\in\mathbb{N}^{*}$, $\mathcal{D}_{K, \mu_{\infty}}(x^{\varphi(k)})=\mathcal{D}_{K, \mu_{\infty}}(x)$. Hence, for any $k\in\mathbb{N}^{*}$, $H_{\mathcal{D}_{\infty}}(\zeta^{\varphi(k)})(x^{\varphi(k)}-x)^{\otimes 2}=0$. Consequently, for any $k\in\mathbb{N}^{*}$, 
\[H_{\mathcal{D}_{\infty}}(\zeta^{\varphi(k)})\Big(\frac{x^{\varphi(k)}-x}{\left|x^{\varphi(k)}-x\right|}\Big)^{\otimes 2}=0.\]
Thus we have $H_{\mathcal{D}_{\infty}}(x)\widetilde{u}^{\otimes 2}=0$ by letting $k\rightarrow +\infty$, so that $H_{\mathcal{D}_{\infty}}(x)$ has an eigenvalue $0$. 
\end{proof}

\subsection{A criterion for positive definiteness of $H_{\mathcal{D}_{\infty}}(x^{*})$ in 1-dimension}\label{logcondim1}
\noindent Let  $\mu\in\mathcal{P}_{2}(\mathbb{R})$ with $\mathrm{card}\big(\mathrm{supp}(\mu)\big)\geq K$. Assume that $\mu$ is absolutely continuous with  respect to  the Lebesgue measure {having a density function $f$}. 
 In the one-dimensional case, it is useful to point out a sufficient condition for the uniqueness of optimal quantizer. A probability distribution $\mu$ is called \textit{strongly unimodal} if its density function $f$ satisfies that $I=\{f>0\}$ is an open (possibly unbounded) interval and $\log f$ is concave on $I$. Let $F_{K}^{+}:=\big\{ x=(x_{1}, ..., x_{K})\in \mathbb{R}^{K}\mid -\infty< x_{1} < x_{2} < ... <x_{K} <+\infty \big\}$. 
\begin{lemma}\label{uniqueness}
For $K\in\mathbb{N}^{*}$, if $\mu$ is strongly unimodal with $\mathrm{card}\big(\mathrm{supp}(\mu)\big)\geq K$, then there is only one stationary (then optimal) quantizer of level $K$ in $F_{K}^{+}$. 
\end{lemma}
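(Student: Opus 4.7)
My plan is to recast 1D stationarity as a fixed-point equation on the Voronoi cell boundaries and exploit log-concavity of $f$ to prove uniqueness of that fixed point. Let $I = \{f > 0\}$ and set $a_0 := \inf I$, $a_K := \sup I$. To each $x = (x_1, \dots, x_K) \in F_K^+$ associate the cell boundaries $a_i := (x_i + x_{i+1})/2$, $i = 1, \dots, K-1$. By the gradient formula (\ref{gradg}), $x$ is stationary if and only if $x_i = c(a_{i-1}, a_i)$ for every $i$, where $c(a,b) := \mathbb{E}[X \mid X \in [a,b]]$. Substituting back, stationary quantizers in $F_K^+$ are in bijection with the fixed points of the map
$$T(a)_i := \tfrac{1}{2}\bigl( c(a_{i-1}, a_i) + c(a_i, a_{i+1}) \bigr), \qquad i = 1, \dots, K-1,$$
on the open simplex $\mathcal{S} := \{ a_0 < a_1 < \dots < a_{K-1} < a_K \} \subset \mathbb{R}^{K-1}$.

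The key analytic input is a derivative estimate for $c$. A direct computation yields
$$\partial_1 c(a,b) = \frac{f(a)(c(a,b) - a)}{\mu([a,b])}, \qquad \partial_2 c(a,b) = \frac{f(b)(b - c(a,b))}{\mu([a,b])},$$
both strictly positive on $\mathcal{S}$. The classical Fleischer--Kieffer--Trushkin inequality for log-concave densities asserts that $\partial_1 c(a,b) + \partial_2 c(a,b) \leq 1$ on $\mathcal{S}$, with strict inequality whenever $\log f$ is strictly concave on $[a,b]$. Consequently the tridiagonal Jacobian $DT(a)$ satisfies $\|DT(a)\|_\infty \leq 1$, with strict inequality in the strictly log-concave case. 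Uniqueness of the fixed point then follows from the mean-value inequality applied along the segment joining two hypothetical fixed points $a, \tilde a \in \mathcal{S}$ (which is compact in $\mathcal{S}$): one obtains $\|a - \tilde a\|_\infty \leq \theta \|a - \tilde a\|_\infty$ for some $\theta < 1$, forcing $a = \tilde a$. Combined with existence of an optimal (hence stationary) quantizer via Proposition \ref{relation}(ii), this yields uniqueness in $F_K^+$.

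The borderline case $\partial_1 c + \partial_2 c \equiv 1$ (which arises only when $f$ is piecewise uniform on an interval) is handled by a perturbation argument: approximate $\mu$ in $\mathcal{W}_{2}$ by strictly log-concave probability measures and pass to the limit using Pollard's theorem on the continuity of optimal quantizers. Equivalently, the whole argument can be rephrased as strict convexity of the distortion $\mathcal{D}_{K, \mu}$ reparametrized via the map $a \mapsto (c(a_0, a_1), \dots, c(a_{K-1}, a_K))$ on the convex set $\mathcal{S}$. The main obstacle is the Fleischer--Kieffer--Trushkin bound $\partial_1 c + \partial_2 c \leq 1$ itself, which genuinely requires log-concavity of $f$ (ordinary unimodality does not suffice) and is typically derived via a Pr\'ekopa--Leindler-style comparison of the restriction of $\mu$ to $[a,b]$ with a truncated exponential density.
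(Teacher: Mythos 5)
The paper does not actually prove Lemma \ref{uniqueness}: it defers entirely to \cite{kieffer1983uniqueness}, \cite{trushkin1982sufficient} and \cite{graf2000foundations}[Theorem 5.1], so you are reconstructing an argument the authors outsourced. Your setup is sound as far as it goes: the bijection between stationary quantizers in $F_K^+$ (with all cells of positive mass) and fixed points of the threshold map $T$ is correct, the formulas $\partial_1 c(a,b)=f(a)\big(c(a,b)-a\big)/\mu([a,b])$ and $\partial_2 c(a,b)=f(b)\big(b-c(a,b)\big)/\mu([a,b])$ are right, and the inequality $\partial_1c+\partial_2c\le 1$ is exactly the statement $\big(\int_a^b f\big)^2\ge f(a)\int_a^b(\xi-a)f+f(b)\int_a^b(b-\xi)f$, i.e.\ the positivity of the quantity $\varphi_i$ that the paper itself proves in Lemma \ref{lemphi}(a) --- but only under the hypotheses of Proposition \ref{podedim1}(ii), namely $f$ differentiable and $\log f$ \emph{strictly} concave. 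In that regime your mean-value/contraction argument on the compact segment joining two fixed points does close the proof.

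The genuine gap is the non-strict case, which is not a corner case but the actual content of the lemma (strong unimodality only asks that $\log f$ be concave). Two problems. First, your claim that equality $\partial_1c+\partial_2c=1$ occurs only for piecewise uniform $f$ is false: equality holds whenever $\log f$ is \emph{affine} on $[a,b]$ (check $f(\xi)=e^{-\xi}$ on $[0,1]$: both sides equal $1-e^{-1}$), so exponential-type tails sit exactly on the boundary. Concretely, for the Laplace density $f(\xi)=\tfrac12 e^{-|\xi|}$ with $K=2$ one computes $T'(0)=\tfrac12\big(\partial_2c(-\infty,0)+\partial_1c(0,\infty)\big)=\tfrac12(1+1)=1$ at the unique fixed point $a_1=0$, so $\Vert DT\Vert_\infty=1$ there and the estimate $\Vert a-\tilde a\Vert_\infty\le\theta\Vert a-\tilde a\Vert_\infty$ with $\theta<1$ is simply unavailable; for $K\ge3$ all interior row sums equal $1$ on log-affine stretches. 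Second, the proposed repair --- approximate by strictly log-concave measures and invoke Pollard's theorem --- cannot deliver uniqueness: Pollard's theorem only says that limit points of the optimal quantizers of the approximations are optimal for $\mu$; it does not exclude that $\mu$ has several distinct optimal quantizers (uniqueness is not a property preserved under $\mathcal{W}_2$-limits), and it says nothing whatsoever about non-optimal stationary points, which the lemma also covers. To handle the general strongly unimodal case one needs a different mechanism, e.g.\ the monotone first-threshold ``shooting'' argument of Trushkin and Kieffer, or a Perron--Frobenius/irreducible-diagonal-dominance refinement showing the spectral radius of $\int_0^1 DT\,ds$ is $<1$ --- and even the latter fails verbatim in the Laplace example above. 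As it stands your proof establishes the lemma only for differentiable, strictly log-concave densities.
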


We refer to \cite{kieffer1983uniqueness}, \cite{trushkin1982sufficient}, \cite{MR1239840} and \cite{graf2000foundations}[see Theorem 5.1] for the proof of Lemma \ref{uniqueness} and for more details. 

Given a $K$-tuple $x=(x_{1}, ..., x_{K})\in F_{K}^{+}$, the Voronoi region $V_{i}(x)$ can be explicitly written: $V_{1}(x)=(-\infty, \frac{x_{1}+x_{2}}{2}]$, $V_{K}(x)=[\frac{x_{K-1}+x_{K}}{2}, +\infty)$ and $V_{i}(x)=[\frac{x_{i-1}+x_{i}}{2}, \frac{x_{i}+x_{i+1}}{2}]$ for $i=2,...,K-1$. 
For all $x\in F_{K}^{+}$, $\mathcal{D}_{K,\,\mu}$ is differentiable at $x$ and by (\ref{gradg}) and
 \begin{equation}\label{Gdim1}
\nabla\mathcal{D}_{K,\,\mu}(x)=\left[\int_{V_{i}(x)}2(x_{i}-\xi)f(\xi)d\xi\right]_{i=1, ..., K}.
\end{equation}
Therefore, as $\nabla\mathcal{D}_{K,\,\mu}(x^{*})=0$, one can solve the optimal quantizer $x^{*}\in F_{K}^{+}$ as follows,   
\begin{equation}\label{stationarych3}
x_{i}^{*}=\frac{\int_{V_{i}(x^{*})}\xi f(\xi)d\xi}{\mu\big(V_{i}(x^{*})\big)},\;\;\;\text{for}\; i=1, ..., K.
\end{equation}

For any $x\in F_{K}^{+}$, the Hessian matrix $H_{\mathcal{D}_{K, \mu}}$ of $\mathcal{D}_{K,\,\mu}$ at $x$ is a tridiagonal symmetry matrix and can be calculated as follows, 
\small{
\begin{equation}\label{hessiendim11}
H_{\mathcal{D}_{K, \mu}}(x)=\left(\begin{array}{ccccccc}
A_{1}-B_{1,2} & -B_{1,2}\\
& \ddots & \\
   & -B_{i-1,i} & A_{i}-B_{i-1, i}-B_{i, i+1} & -B_{i, i+1}\\
   &  &  & \ddots & \\
   &  &  &  & -B_{K-1, K} & A_{K}-B_{K-1,K}
\end{array}\right),
\end{equation}}
\normalsize where $A_{i}=2\mu\big(C_{i}(x)\big)$ for $1\leq i \leq K$ and $B_{i, j}=\frac{1}{2}(x_{j}-x_{i})f(\frac{x_{i}+x_{j}}{2})$ for $1\leq i < j\leq K$. 
{ Let $F_{\mu}$ be the cumulative distribution function of $\mu$, then 
\begin{align}
&A_{1}=2\mu\big(C_{1}(x)\big)=2F_{\mu}\Big(\frac{x_{1}+x_{2}}{2}\Big),\nonumber\\
&A_{i}=2\mu\big(C_{i}(x)\big)=2\Big[F_{\mu}\Big(\frac{x_{i+1}+x_{i}}{2}\Big)-F_{\mu}\Big(\frac{x_{i-1}+x_{i}}{2}\Big)\Big], \quad\text{for }\, i=2, ..., K-1,\nonumber\\
&A_{K}=2\mu\big(C_{K}(x)\big)=2\Big[1-F_{\mu}\Big(\frac{x_{K-1}+x_{K}}{2}\Big)\Big].\nonumber
\end{align} 
Then the continuity of each term in the matrix $H_{\mathcal{D}_{K, \mu}}(x)$ can be directly derived from the continuity of $F_{\mu}$.

For $1\leq i\leq K$, we define $\displaystyle L_{i}(x)\coloneqq\sum_{j=1}^{K}\frac{\partial^{2}\mathcal{D}_{K,\,\mu}}{\partial x_{i}\partial x_{j}}(x)$. The following proposition gives sufficient conditions to obtain the positive definiteness of $H_{\mathcal{D}_{K, \mu}}(x^{*})$. 

\begin{proposition}\label{podedim1} Let  $\mu\in\mathcal{P}_{2}(\mathbb{R})$ with $\mathrm{card}\big(\mathrm{supp}(\mu)\big)\geq K$. Assume that $\mu$ is absolutely continuous with  respect to  the Lebesgue measure {having a density function $f$}.
Any of the following two conditions implies the positive definiteness of $H_{\mathcal{D}_{K, \mu}}(x^{*})$,
\begin{enumerate}[(i)]
\item $\mu$ is the uniform distribution,
\item $f$ is differentiable and $\log f$ is strictly concave. 
\end{enumerate}
In particular, $(ii)$ also implies that $L_{i}(x^{*})>0$, $i=1, ..., K$. 

\end{proposition}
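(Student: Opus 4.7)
My plan is to reduce positive definiteness of the tridiagonal symmetric matrix $H_{\mathcal{D}_{K,\,\mu}}(x^{*})$ to a pointwise condition on the row sums $L_i(x^{*}) = A_i - 2(B_{i-1,i}+B_{i,i+1})$ (with the convention $B_{0,1}=B_{K,K+1}=0$), and then to verify that condition directly in case $(i)$ and via an integration by parts combined with strict log-concavity in case $(ii)$. The starting point is the elementary identity $-2v_iv_{i+1}=(v_i-v_{i+1})^{2}-v_i^{2}-v_{i+1}^{2}$, which, applied to the matrix (\ref{hessiendim11}), yields, for every $v\in\mathbb{R}^{K}$,
\[
v^{\top}H_{\mathcal{D}_{K,\,\mu}}(x^{*})\,v \;=\; \sum_{i=1}^{K}L_i(x^{*})\,v_i^{\,2}\;+\;\sum_{i=1}^{K-1}B_{i,i+1}(x^{*})\,(v_i-v_{i+1})^{2}.
\]
At $x^{*}\in F_K^{+}$, every $B_{i,i+1}(x^{*})$ is strictly positive (the midpoints $m_{i+1}$ lie in $\{f>0\}$), so the second sum vanishes only when $v$ is constant. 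Thus positive definiteness will follow once $L_i(x^{*})\geq 0$ for every $i$ and $\sum_{i} L_i(x^{*})>0$.

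Case $(i)$ is then immediate: at the equispaced optimal quantizer $x_i^{*}=a+(2i-1)(b-a)/(2K)$ for the uniform law on $[a,b]$, one computes $A_i=2/K$ and $B_{i-1,i}=B_{i,i+1}=1/(2K)$, which gives $L_1(x^{*})=L_K(x^{*})=1/K>0$ and $L_i(x^{*})=0$ for $1<i<K$. For case $(ii)$, I would prove the stronger statement $L_i(x^{*})>0$ for every $i$. Starting from the explicit form of $L_i$ in (\ref{hessiendim11}), a single integration by parts gives
\[
L_i(x^{*})\;=\;-2\int_{V_i(x^{*})}(\eta-x_i^{*})\,f'(\eta)\,d\eta\;=\;-2\int_{V_i(x^{*})}(\eta-x_i^{*})\,(\log f)'(\eta)\,f(\eta)\,d\eta,
\]
and the stationarity relation (\ref{stationarych3}), namely $\int_{V_i(x^{*})}(\eta-x_i^{*})f(\eta)d\eta=0$, lets me recenter $(\log f)'$ at $x_i^{*}$:
\[
L_i(x^{*})\;=\;-2\int_{V_i(x^{*})}(\eta-x_i^{*})\bigl[(\log f)'(\eta)-(\log f)'(x_i^{*})\bigr]\,f(\eta)\,d\eta.
\]

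Strict concavity of $\log f$ makes $(\log f)'$ strictly decreasing on $\{f>0\}$, so $(\eta-x_i^{*})$ and $(\log f)'(\eta)-(\log f)'(x_i^{*})$ have opposite signs and the integrand is strictly negative for $\eta\neq x_i^{*}$. Because $x_i^{*}$ lies strictly interior to $V_i(x^{*})$ (trivially for $1<i<K$; for $i=1$ or $K$ because $x_1^{*}<m_2$ and $x_K^{*}>m_{K}$), the integral is strictly negative, hence $L_i(x^{*})>0$. The delicate step will be the legitimacy of the integration by parts at $\pm\infty$ for the unbounded cells $V_1$ and $V_K$, which amounts to $(\eta-x_1^{*})f(\eta)\to 0$ as $\eta\to-\infty$ and its counterpart at $+\infty$; both follow from the exponential tail decay enjoyed by any strictly log-concave density with finite second moment.
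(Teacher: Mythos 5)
Your proof is correct, but it takes a genuinely different route from the paper on both halves of the statement. For positive definiteness, you use the explicit quadratic-form decomposition $v^{\top}H_{\mathcal{D}_{K,\mu}}(x^{*})v=\sum_{i}L_{i}(x^{*})v_{i}^{2}+\sum_{i}B_{i,i+1}(x^{*})(v_{i}-v_{i+1})^{2}$ (which checks out against (\ref{hessiendim11})), whereas the paper computes all leading principal minors via a three-term recurrence in case $(i)$ and invokes the Gershgorin circle theorem in case $(ii)$. Your identity subsumes the Gershgorin step (strict diagonal dominance is exactly $L_{i}>0$) and, more importantly, handles the uniform case cleanly even though the interior $L_{i}$ vanish there --- the paper has to switch to a separate minor computation precisely because Gershgorin is not strict in that case. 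For $L_{i}(x^{*})>0$ under strict log-concavity, your argument --- integration by parts to get $L_{i}=-2\int_{V_{i}}(\eta-x_{i}^{*})f'(\eta)\,d\eta$, recentering $(\log f)'$ at $x_{i}^{*}$ via the stationarity identity (\ref{stationarych3}), and reading off the sign from monotonicity of $(\log f)'$ --- replaces the paper's Lemma \ref{lemphi}, which introduces auxiliary functions $\varphi_{i}(u)$, applies the mean value theorem twice to their mixed partials, and then treats the two unbounded boundary cells by a separate limiting argument in $M$. Your version is shorter and makes the role of log-concavity transparent; the one step you correctly flag as delicate (vanishing of the boundary terms at $\pm\infty$) is the same issue the paper confronts through its $M\to\infty$ truncation, and it is indeed settled by the exponential tail decay of any integrable log-concave density (finiteness of the second moment is not even needed for that). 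The only points worth making explicit in a final write-up are that the midpoints and the $x_{i}^{*}$ lie in the open interval $\{f>0\}$ (so that $B_{i,i+1}(x^{*})>0$ and $(\log f)'(x_{i}^{*})$ is defined), and the trivial case $K=1$.
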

Proposition \ref{podedim1} is proved in Appendix D.
Remark that, under the conditions of Proposition \ref{podedim1}, $\mu$ is strongly unimodal so that there is exactly one optimal quantizer in $F_{K}^{+}$ for $\mu$ at level $K$.
The conditions in Proposition \ref{podedim1} directly imply the following convergence rate results.

\begin{theorem}\label{covratedim1}
{\color{black}Let $K\in\mathbb{N}^{*}$ be the quantization level. For every $n\in\mathbb{N}^{*}\cup \{\infty\}$, let $\mu_{n}\in\mathcal{P}_{2}(\mathbb{R})$ with $\mathrm{card}\big(\mathrm{supp}(\mu_{n})\big)\geq K$ be such that $\mathcal{W}_{2}(\mu_{n}, \mu_{\infty})\rightarrow0$ as $n\rightarrow+\infty$. Assume that $\mu_{\infty}$ is absolutely continuous with  respect to  the Lebesgue measure, written $\mu_{\infty}(d\xi)=f(\xi)d\xi$. Let $x^{(n)}$ be an optimal quantizer of $\mu_{n}$ converging to $x^{(\infty)}$. Then any one of the following two conditions
\begin{enumerate}[$(i)$]
\item $\mu_{\infty}$ is the uniform distribution
\item $f$ is differentiable and $\log f$ is strictly concave
\end{enumerate}
implies the existence of constants $C_{\mu_{\infty}}^{(1)}$ and $C_{\mu_{\infty}}^{(2)}$ only depending  on $\mu_{\infty}$ such that for $n$ large enough,
\[\left|x^{(n)}-x^{(\infty)}\right|^{2}\leq C_{\mu_{\infty}}^{(1)}\cdot\mathcal{W}_{2}(\mu_{n}, \mu_{\infty})+C_{\mu_{\infty}}^{(2)}\cdot\mathcal{W}_{2}^{2}(\mu_{n}, \mu_{\infty}).\]

}
\end{theorem}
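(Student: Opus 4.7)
The statement is essentially the one-dimensional specialization of Corollary \ref{corrate}, so the plan is to reduce it to that corollary by verifying its unique remaining hypothesis in dimension one under either of the two conditions $(i)$ and $(ii)$. The corollary requires, on top of $\mathcal{W}_2$-convergence and the convergence $x^{(n)}\to x^{(\infty)}$ (both assumed), that the Hessian matrix $H_{\mathcal{D}_{K,\,\mu_{\infty}}}$ be positive definite in a neighbourhood of $x^{(\infty)}$. My approach is first to establish positive definiteness exactly at $x^{(\infty)}$, and then to upgrade this to a neighbourhood by a continuity argument based on the explicit tridiagonal form (\ref{hessiendim11}).

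First I would apply Proposition \ref{podedim1} with the optimal quantizer $x^{\ast}=x^{(\infty)}$: under either hypothesis $(i)$ or $(ii)$, the proposition guarantees that $H_{\mathcal{D}_{K,\,\mu_{\infty}}}(x^{(\infty)})$ is positive definite, hence its smallest eigenvalue is some $\lambda^{\ast}>0$. Note that since $x^{(\infty)}$ is an optimal quantizer of $\mu_\infty$, Proposition \ref{relation}(ii) ensures $x^{(\infty)}\in F_K$, and in dimension one this means $x^{(\infty)}\in F_K^+$ after reordering, so the formula (\ref{hessiendim11}) applies.

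Next I would exploit the fact that, in dimension one, the Hessian has the tridiagonal structure (\ref{hessiendim11}): its diagonal entries are affine combinations of values $F_{\mu_\infty}(\tfrac{x_i+x_{i+1}}{2})$, and its off-diagonal entries are of the form $-\tfrac{1}{2}(x_j-x_i)f\big(\tfrac{x_i+x_j}{2}\big)$. The cumulative distribution function $F_{\mu_\infty}$ is continuous because $\mu_\infty$ is absolutely continuous, so the diagonal entries are continuous in $x$. For the off-diagonal entries, one needs continuity of $f$ at the midpoints $\tfrac{x_i^{(\infty)}+x_{i+1}^{(\infty)}}{2}$: under $(ii)$ this is immediate since $f$ is differentiable on the interior of its support, and under $(i)$ the density is constant on $\mathrm{supp}(\mu_\infty)$ and all relevant midpoints of an optimal quantizer lie strictly inside that support, so $f$ is continuous at those points. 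Hence $x\mapsto H_{\mathcal{D}_{K,\,\mu_{\infty}}}(x)$ is continuous at $x^{(\infty)}$, and by continuity of eigenvalues in the matrix entries, $H_{\mathcal{D}_{K,\,\mu_{\infty}}}$ remains positive definite in a neighbourhood of $x^{(\infty)}$ with smallest eigenvalue bounded below by, say, $\lambda^{\ast}/2$.

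Finally, all hypotheses of Corollary \ref{corrate} being in force, I would invoke that corollary to obtain, for $n$ large enough,
\[
\bigl|x^{(n)}-x^{(\infty)}\bigr|^{2}\le C^{(1)}_{\mu_{\infty}}\,\mathcal{W}_{2}(\mu_{n},\mu_{\infty})+C^{(2)}_{\mu_{\infty}}\,\mathcal{W}_{2}^{2}(\mu_{n},\mu_{\infty}),
\]
which is exactly the claim. The only subtle step is the verification of continuity of the Hessian in case $(i)$, where the uniform density fails to be continuous at the boundary of its support; this is handled by observing that an optimal quantizer's Voronoi midpoints stay strictly inside the support. Beyond that, the proof is a short assembly of previously proved results.
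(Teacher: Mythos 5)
Your proposal is correct and follows essentially the same route as the paper: apply Proposition \ref{podedim1} at $x^{(\infty)}$ (which is an optimal quantizer of $\mu_{\infty}$ by Pollard's theorem), propagate positive definiteness to a neighbourhood by continuity of the entries of the tridiagonal Hessian (\ref{hessiendim11}), and conclude via Corollary \ref{corrate}. The only cosmetic difference is that you use continuity of the smallest eigenvalue in both cases, whereas the paper uses continuity of the leading principal minors in case $(i)$ and continuity of the row sums $L_{i}$ combined with the Gershgorin circle theorem in case $(ii)$; your explicit remark that the Voronoi midpoints of the optimal quantizer lie strictly inside the support in the uniform case is a welcome precision the paper glosses over.
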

\begin{proof}
Let $\mathcal{D}_{K,\,\mu_{\infty}}$ denote the distortion function of $\mu_{\infty}$ and let $H_{\mathcal{D}_{\infty}}$ denote the Hessian matrix of $\mathcal{D}_{K,\,\mu_{\infty}}$.

\noindent$(i)$ Let $g_{k}(x)$ be the $k$-th leading principal minor of $H_{\mathcal{D}_{\infty}}(x)$ defined in ($\ref{hessiendim11}$), then  $g_{k}(x), k=1, ..., K,$ are continuous functions in $x$ since every element in this matrix is continuous. Proposition \ref{podedim1} implies $g_{k}(x^{(\infty)})>0$, thus there exists $r>0$ such that for every $x\in B(x^{(\infty)}, r)$, $g_{k}(x^{(\infty)})>0$ so that $H_{\mathcal{D}_{\infty}}(x)$ is positive definite. What remains can be directly proved by Corollary \ref{corrate}.

\noindent$(ii)$ The function $\displaystyle L_{i}(x)\coloneqq\sum_{j=1}^{K}\frac{\partial^{2}\mathcal{D}_{K,\,\mu_{\infty}}}{\partial x_{i}\partial x_{j}}(x)$ is continuous on $x$ and Proposition \ref{podedim1} implies that $L_{i}(x^{(\infty)})>0$. Hence, there exists $r>0$ such that $\forall x\in B(x^{(\infty)}, r)$, $L_{i}(x)>0$. From (\ref{hessiendim11}), one can remark that the $i$-th diagonal elements in $H_{\mathcal{D}_{\infty}}(x)$ is always larger than $L_{i}(x)$ for any $x\in\mathbb{R}^{K}$, then after Gershgorin Circle theorem, we derive that $H_{\mathcal{D}_{\infty}}(x)$ is positive definite for every $x\in B(x^{(\infty)}, r)$. What remains can be directly proved by Corollary \ref{corrate}.
\end{proof}



\section{Empirical measure case}\label{section3}

\noindent  Let $K\in\mathbb{N}^{*}$ be the quantization level. 
 Let $\mu\in\mathcal{P}_{2+\varepsilon}(\mathbb{R}^{d})$ for some $\varepsilon>0$ and $\mathrm{card}\big(\mathrm{supp}(\mu)\big)\geq K$. Let $X$ be a random variable with distribution $\mu$ and let $(X_{n})_{n\geq1}$ be a sequence of independent identically distributed $\mathbb{R}^{d}$-valued random variables with probability distribution $\mu$. The empirical measure is defined for every $n\in\mathbb{N}^{*}$ by
\begin{equation}\label{empi}
\mu_{n}^{\omega}\coloneqq \frac{1}{n}\sum_{i=1}^{n}\delta_{X_{i}(\omega)},\;\;\;\omega\in\Omega,
\end{equation}
where $\delta_{a}$ is the Dirac mass at $a$. For $n\geq1$, let $x^{(n),\omega}$ be an optimal quantizer of $\mu_{n}^{\omega}$. The superscript $\omega$ is to emphasize that both $\mu_{n}^{\omega}$ and $x^{(n), \omega}$ are random and we will drop $\omega$ when there is no ambiguity. 
We  cite two results of the convergence of $\mathcal{W}_{2}(\mu_{n}^{\omega}, \mu)$ among so many researches in this topic: the a.s. convergence in  \cite{pollard1982quantization}[see Theorem 7]  and the $L^{p}$-convergence rate of $\mathcal{W}_{p}(\mu_{n}^{\omega}, \mu)$  in \cite{fournier2015rate}.

\begin{thm*}(\cite{fournier2015rate}[see Theorem 1])
Let $p>0$ and let $\mu\in\mathcal{P}_{q}(\mathbb{R}^{d})$ for some $q>p$. Let $\mu_{n}^{\omega}$ denote the empirical measure of $\mu$ defined in (\ref{empi}). There exists a constant $C$ only depending on $p,d,q$ such that, for all $n\geq 1$,
\begin{equation}\label{nico}
\small{\mathbb{E}\Big(\mathcal{W}_{p}^{p}(\mu_{n}^{\omega}, \mu)\Big)\leq CM_{q}^{p/q}(\mu)\times\begin{cases}
n^{-1/2}+n^{-(q-p)/q} & \:\mathrm{if}\:p>d/2\:\mathrm{and}\:q\neq2p\\
n^{-1/2}\log(1+n)+n^{-(q-p)/q} & \:\mathrm{if}\:p=d/2\:\mathrm{and}\:q\neq2p\\
n^{-p/d}+n^{-(q-p)/q} & \:\mathrm{if}\:p\in(0, d/2)\:\mathrm{and}\:q\neq d/(d-p)
\end{cases}},
\end{equation}
where $M_{q}(\mu)=\int_{\mathbb{R}^{d}}\left|\xi\right|^{q}\mu(d\xi)$.
\end{thm*}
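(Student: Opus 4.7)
The plan is to prove the bound first for compactly supported $\mu$ by a dyadic partition argument (going back to Dereich--Scheutzow--Schottstedt), and then to extend to general $\mu \in \mathcal{P}_q(\mathbb{R}^d)$ by a moment-controlled spatial truncation that produces the additional $n^{-(q-p)/q}$ term. By homogeneity, rescaling $X \mapsto X/M_q(\mu)^{1/q}$ multiplies $\mathcal{W}_p^p$ by $M_q(\mu)^{p/q}$, so it suffices to prove the stated rates on the right-hand side under the normalisation $M_q(\mu)=1$; this cleanly absorbs the prefactor $M_q^{p/q}(\mu)$.

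\textbf{Compactly supported case.} Assume $\mu$ is supported in $[0,1)^d$ and, for each $l \in \mathbb{N}$, partition the cube into $2^{ld}$ sub-cubes $(Q_{l,i})_i$ of side $2^{-l}$. The key tool is a dyadic transportation inequality: for any two probability measures $\alpha,\beta$ on $[0,1)^d$ and any $L \in \mathbb{N}$,
\begin{equation*}
\mathcal{W}_p^p(\alpha,\beta) \leq C_{p,d}\Bigl[\sum_{l=0}^{L} 2^{-lp}\sum_i \lvert\alpha(Q_{l,i})-\beta(Q_{l,i})\rvert + 2^{-Lp}\Bigr],
\end{equation*}
built from an explicit recursive coupling that at each scale matches mass inside each dyadic cube and absorbs the sub-cube residuals at the next finer level. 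Applying this with $\alpha=\mu_n^\omega$, $\beta=\mu$, taking expectations, and using the binomial variance bound $\mathbb{E}\lvert\mu_n^\omega(Q)-\mu(Q)\rvert \leq \sqrt{\mu(Q)/n}$ together with Cauchy--Schwarz in $i$ (so that $\sum_i \sqrt{\mu(Q_{l,i})/n} \leq \sqrt{2^{ld}/n}$) yields
\begin{equation*}
\mathbb{E}\,\mathcal{W}_p^p(\mu_n^\omega,\mu) \leq C_{p,d}\Bigl[\sum_{l=0}^{L} 2^{l(d/2-p)}/\sqrt{n} + 2^{-Lp}\Bigr].
\end{equation*}
Optimising over $L$ gives the three regimes directly: if $p>d/2$ the geometric series converges and we let $L\to\infty$ to obtain $n^{-1/2}$; if $p=d/2$, taking $L\sim \log_2 n$ yields $n^{-1/2}\log n$; if $p<d/2$, the optimum $2^L\sim n^{1/d}$ yields $n^{-p/d}$.

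\textbf{Extension to unbounded support.} Partition $\mathbb{R}^d$ into the ball $B_0=\{\lvert x\rvert\leq 1\}$ and the dyadic annuli $B_k=\{2^{k-1}<\lvert x\rvert\leq 2^k\}$ for $k\geq 1$; Markov's inequality gives $\mu(B_k)\leq M_q(\mu)\,2^{-q(k-1)}$. Conditionally on the multinomial cell-counts $N_k=n\mu_n^\omega(B_k)$, the $N_k$ points in $B_k$ are i.i.d.\ from $\mu(\cdot\cap B_k)/\mu(B_k)$, so the compact-case bound applies to the normalised restrictions once $B_k$ is rescaled to a unit cube (multiplying the $p$-cost by $2^{kp}$). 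Coupling annulus-by-annulus and absorbing the mismatch of cell-masses $\lvert\mu_n^\omega(B_k)-\mu(B_k)\rvert$ by transporting the surplus mass to the origin produces a bound of the form
\begin{equation*}
\mathbb{E}\,\mathcal{W}_p^p(\mu_n^\omega,\mu) \leq C\sum_{k\geq 0} 2^{kp}\,\mu(B_k)\,\mathrm{R}\bigl(n\mu(B_k)\bigr) + C\sum_{k\geq 0} 2^{kp}\,\mathbb{E}\lvert\mu_n^\omega(B_k)-\mu(B_k)\rvert,
\end{equation*}
where $\mathrm{R}(m)$ is the compact-cube rate in $m$ samples from Step 2. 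Using the Cauchy--Schwarz-style bound $\mathbb{E}\lvert\mu_n^\omega(B_k)-\mu(B_k)\rvert\leq \sqrt{\mu(B_k)/n}$ and splitting at an optimal cutoff $k^*=k^*(n)$, the two sums contribute respectively the leading dyadic rate and the tail term $n^{-(q-p)/q}$, after balancing $2^{kp}$ against the Markov decay $2^{-qk}$.

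\textbf{Main obstacle.} The hardest single step is the dyadic transportation inequality: one must construct a recursive coupling whose telescoping over scales produces $L^1$-type marginal differences $\lvert\alpha(Q_{l,i})-\beta(Q_{l,i})\rvert$ (so that the binomial variance linearises under expectation), rather than the $L^p$-type differences a naive coupling would yield. A secondary difficulty is the tail optimisation in Step 3: choosing $k^*$ so that $2^{kp}$ is correctly balanced against both $\mathrm{R}(n\mu(B_k))$ and the Markov decay forces a case-by-case argument in the three regimes of the statement, and it is precisely in the borderline cases $p=d/2$ or $q=d/(d-p)$ (and $q=2p$) that a logarithmic factor emerges from the sum over $l$ or $k$, matching the excluded equalities in the theorem.
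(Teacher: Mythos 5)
This theorem is quoted in the paper from Fournier and Guillin (2015) without proof, so there is no internal argument to compare against; your proposal reconstructs precisely the strategy of that reference (which itself adapts Dereich--Scheutzow--Schottstedt): the dyadic transportation inequality with $L^1$-type cube-mass differences, the binomial estimate $\mathbb{E}\lvert\mu_n^\omega(Q)-\mu(Q)\rvert\leq\sqrt{\mu(Q)/n}$ summed over scales via Cauchy--Schwarz, and the annulus decomposition with Markov tail control producing the extra $n^{-(q-p)/q}$ term. The outline, the three-regime optimisation, and your identification of the recursive coupling lemma as the technical core are all correct.
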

\normalsize

Let $\mathcal{D}_{K,\,\mu}$ denote the distortion function of $\mu$ and let $\mathcal{D}_{K,\,\mu_{n}}$ denote the distortion fuction of $\mu_{n}^{\omega}$  for any $n\in\mathbb{N}^{*}$. Recall by Definition \ref{defdistortion} that for $c=(c_{1}, ..., c_{K})\in(\mathbb{R}^{d})^{K}$,
\begin{align}
&\mathcal{D}_{K,\,\mu}(c)=\mathbb{E}\min_{1\leq k\leq K}\left| X-c_{k}\right|^{2}=\mathbb{E}\Big[\left|X\right|^{2}+\min_{1\leq k\leq K}\big(-2\langle X | c_{k}\rangle+\left|c_{k}\right|^{2}\big)\Big],\nonumber\\
&\text{and}\;\;\mathcal{D}_{K,\,\mu_{n}}(c)=\frac{1}{n}\sum_{i=1}^{n}\min_{1\leq k\leq K}\left| X_{i}-c_{k}\right|^{2}=\frac{1}{n}\sum_{i=1}^{n}\left|X_{i}\right|^{2}+\min_{1\leq k\leq K}\left(-\frac{2}{n}\sum_{i=1}^{n}\langle X_{i} | c_{k}\rangle+\left|c_{k}\right|^{2}\right).\nonumber
\end{align}

The a.s. convergence of optimal quantizers for the empirical measure has been proved in \cite{pollard1981strong}. 
We give a first upper bound of the  clustering performance 
by applying directly Theorem \ref{cvgratedistortion} and (\ref{nico}).

\begin{proposition}\label{convrateemprical}
Let $K\in\mathbb{N}^{*}$ be the quantization level. 
Let $\mu\in\mathcal{P}_{q}(\mathbb{R}^{d})$ for some $q>2$ with $\text{card}(\text{supp}(\mu))\geq K$ and let $\mu_{n}^{\omega}$ be the empirical measure of $\mu$ defined in (\ref{empi}).   Let $x^{(n),\omega}$ be an optimal quantizer at level $K$ of $\mu_{n}^{\omega}$. Then for any $n> K$, 
{
\begin{align}\label{c}
\mathbb{E}\,\mathcal{D}_{K,\,\mu}&(x^{(n),\omega})-\inf_{x\in(\mathbb{R}^{d})^{K}}\mathcal{D}_{K,\,\mu}(x)\nonumber\\
&\leq C_{d, q,\mu, K}\times \begin{cases}
n^{-1/4}+n^{-(q-2)/2q}& \mathrm{if}\:d<4\:\mathrm{and}\:q\neq4\\
n^{-1/4}\big(\log(1+n)\big)^{1/2}+n^{-(q-2)/2q}  & \mathrm{if}\:d=4\:\mathrm{and}\:q\neq4\\
n^{-1/d}+n^{-(q-2)/2q} & \mathrm{if}\:d>4\:\mathrm{and}\:q\neq d/(d-2)
\end{cases}.
\end{align}
}
\normalsize
where 
$C_{d, q,\mu, K}$ is a constant depending on $d, q, \mu$ and the quantization level $K$. 
\end{proposition}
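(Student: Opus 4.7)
The plan is to apply Theorem \ref{cvgratedistortion} with $\mu_{\infty}=\mu$ and $\mu_{n}=\mu_{n}^{\omega}$ (both lying in $\mathcal{P}_{2}(\mathbb{R}^{d})$, and with $\mathrm{card}(\mathrm{supp}(\mu_{n}^{\omega}))\geq K$ almost surely as soon as $n>K$), and then to combine the resulting pointwise bound with the Fournier--Guillin estimate (\ref{nico}) applied at the exponent $p=2$.

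More precisely, I would first invoke Theorem \ref{cvgratedistortion} pathwise (on the event that $\mu_{n}^{\omega}$ has at least $K$ distinct atoms) to get
\[
\mathcal{D}_{K,\mu}(x^{(n),\omega})-\inf_{x\in(\mathbb{R}^{d})^{K}}\mathcal{D}_{K,\mu}(x)\leq 4e_{K,\mu}^{*}\,\mathcal{W}_{2}(\mu_{n}^{\omega},\mu)+4\,\mathcal{W}_{2}^{2}(\mu_{n}^{\omega},\mu).
\]
Taking expectations and using Jensen's inequality to write $\mathbb{E}\,\mathcal{W}_{2}(\mu_{n}^{\omega},\mu)\leq\bigl(\mathbb{E}\,\mathcal{W}_{2}^{2}(\mu_{n}^{\omega},\mu)\bigr)^{1/2}$, one reduces the problem to controlling $\mathbb{E}\,\mathcal{W}_{2}^{2}(\mu_{n}^{\omega},\mu)$. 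This is precisely what (\ref{nico}) gives at $p=2$: the case separation $d<4$, $d=4$, $d>4$ corresponds respectively to $p>d/2$, $p=d/2$, $p\in(0,d/2)$, and produces the rates $n^{-1/2}+n^{-(q-2)/q}$, $n^{-1/2}\log(1+n)+n^{-(q-2)/q}$, and $n^{-2/d}+n^{-(q-2)/q}$, up to a multiplicative constant $C\,M_{q}^{2/q}(\mu)$.

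To finish, I would take square roots and use the elementary inequality $\sqrt{a+b}\leq\sqrt{a}+\sqrt{b}$ on the Fournier--Guillin bound; this turns the three rates into $n^{-1/4}+n^{-(q-2)/(2q)}$, $n^{-1/4}\sqrt{\log(1+n)}+n^{-(q-2)/(2q)}$, and $n^{-1/d}+n^{-(q-2)/(2q)}$, matching exactly the right-hand side of (\ref{c}). The quadratic term $4\,\mathbb{E}\,\mathcal{W}_{2}^{2}(\mu_{n}^{\omega},\mu)$ in the Theorem~\ref{cvgratedistortion} bound is of strictly smaller (or equal) order than the linear term and can be absorbed into the constant $C_{d,q,\mu,K}$, which finally collects the Fournier--Guillin constant $C(d,q)$, the moment $M_{q}^{1/q}(\mu)$, and the factor $e_{K,\mu}^{*}$ (itself controllable, if one wishes a sharper $K$-dependence, through the non-asymptotic Zador theorem, which is how Remark \ref{remC1d} about the $K^{-1/d}$ behaviour will be obtained).

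There is no real obstacle in this proof: it is a mechanical combination of two results that have already been cited. The only technical point worth being careful about is that Theorem \ref{cvgratedistortion} is stated for a deterministic sequence, so one has to observe that its conclusion holds $\omega$ by $\omega$ on the almost sure event $\{\mathrm{card}(\mathrm{supp}(\mu_{n}^{\omega}))\geq K\}$ (which is automatic for $n>K$ since $\mu$ charges at least $K$ points), and the assumption $\mathcal{W}_{2}(\mu_{n}^{\omega},\mu)\to 0$ required in Theorem~\ref{cvgratedistortion} is not actually used in its proof --- only the pointwise inequality --- so the application is perfectly legitimate.
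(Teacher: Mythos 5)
Your proposal is correct and follows essentially the same route as the paper: apply Theorem \ref{cvgratedistortion} pathwise, take expectations, bound $\mathbb{E}\,\mathcal{W}_{2}(\mu_{n}^{\omega},\mu)$ via Jensen and the Fournier--Guillin estimate (\ref{nico}) at $p=2$, use $\sqrt{a+b}\leq\sqrt{a}+\sqrt{b}$, and absorb the quadratic term into the constant (the paper sets $C_{d,q,\mu,K}=8(C_{d,q,\mu}^{1/2}e_{K,\mu}^{*}\vee C_{d,q,\mu})$, which is the same bookkeeping).
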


The reason why we only consider $n> K$ is that for a fixed $n\in\mathbb{N}^{*}$, the empirical measure $\mu_{n}$ defined in (\ref{empi}) is supported by $n$ points, which implies that, if $n\leq K$, the optimal quantizer of $\mu_{n}$ at level $K$, viewed as a set, is in fact $\mathrm{supp}(\mu_{n})$. This makes the above bound of no interest. 
Following the remark after Theorem 1 in \cite{fournier2015rate}, one can remark that if the probability distribution $\mu$ has sufficiently large moments (namely if $q>4$
when $d\leq4$ and $q>2d/(d-2)$ when $d>4$), then the term $n^{-(q-2)/2q}$ is negligible and can be removed. 

{
\begin{proof}[Proof of Proposition \ref{convrateemprical}]
For every $\omega\in\Omega$ and for every $n> K$, Theorem \ref{cvgratedistortion} implies that 
\[\mathcal{D}_{K, \mu}(x^{(n), \omega})-\inf_{x\in(\mathbb{R}^{d})^{K}}\mathcal{D}_{K, \mu}(x)\leq 4e_{K, \mu}^{*}\mathcal{W}_{2}(\mu_{n}^{\omega}, \mu)+4\mathcal{W}_{2}^{2}(\mu_{n}^{\omega}, \mu).\]
Thus, 
\[\mathbb{E}\,\mathcal{D}_{K, \mu}(x^{(n), \omega})-\inf_{x\in(\mathbb{R}^{d})^{K}}\mathcal{D}_{K, \mu}(x)\leq 4e_{K, \mu}^{*}\mathbb{E}\,\mathcal{W}_{2}(\mu_{n}^{\omega}, \mu)+4\,\mathbb{E}\,\mathcal{W}_{2}^{2}(\mu_{n}^{\omega}, \mu).\]
It follows from (\ref{nico}) applied with $p=2$ that 
\begin{align}\label{w2carre}
\mathbb{E}\,\mathcal{W}_{2}^{2}(\mu_{n}^{\omega}, \mu)&\leq C_{d,q,\mu}\times\begin{cases}
n^{-1/2}+n^{-(q-2)/q} & \mathrm{if}\:d<4\:\mathrm{and}\:q\neq4\\
n^{-1/2}\log(1+n)+n^{-(q-2)/q} & \mathrm{if}\:d=4\:\mathrm{and}\:q\neq4\\
n^{-2/d}+n^{-(q-2)/q} & \mathrm{if}\:d>4\:\mathrm{and}\:q\neq d/(d-2)
\end{cases},
\end{align}
where $C_{d,q,\mu}=C\cdot M_{q}^{2/q}(\mu)$ and $C$ is the constant in $(\ref{nico})$.
Moreover, as $\mathbb{E}\,\mathcal{W}_{2}(\mu_{n}^{\omega}, \mu)\leq\big(\mathbb{E}\mathcal{W}_{2}^{2}(\mu_{n}^{\omega}, \mu)\big)^{1/2}$ and $\sqrt{a+b}\leq \sqrt{a}+\sqrt{b} $ for any $a,b\in\mathbb{R}_{+}$, Inequality (\ref{nico}) also implies 
\begin{align}
\mathbb{E}\,\mathcal{W}_{2}(\mu_{n}^{\omega}, \mu)&\leq C_{d,q,\mu}^{1/2}\times\begin{cases}
n^{-1/4}+n^{-(q-2)/2q} & \mathrm{if}\:d<4\:\mathrm{and}\:q\neq4\\
n^{-1/4}\big(\log(1+n)\big)^{1/2}+n^{-(q-2)/2q} & \mathrm{if}\:d=4\:\mathrm{and}\:q\neq4\\
n^{-1/d}+n^{-(q-2)/2q} & \mathrm{if}\:d>4\:\mathrm{and}\:q\neq d/(d-2)
\end{cases}.\nonumber
\end{align}
Consequently, 
\begin{align}\label{empriboundK}
\mathbb{E}\,\mathcal{D}_{K, \mu}(x^{(n), \omega})&-\inf_{x\in(\mathbb{R}^{d})^{K}}\mathcal{D}_{K, \mu}(x)\leq 4e_{K, \mu}^{*}\mathbb{E}\,\mathcal{W}_{2}(\mu_{n}^{\omega}, \mu)+4\,\mathbb{E}\,\mathcal{W}_{2}^{2}(\mu_{n}^{\omega}, \mu).\nonumber\\
&\leq8(C_{d,q,\mu}^{1/2}e_{K, \mu}^{*}\vee C_{d, q, \mu})\times\nonumber\\
&\;\;\;\;\begin{cases}
n^{-1/4}+n^{-(q-2)/2q}& \mathrm{if}\:d<4\:\mathrm{and}\:q\neq4\\
n^{-1/4}\big(\log(1+n)\big)^{1/2}+n^{-(q-2)/2q}  & \mathrm{if}\:d=4\:\mathrm{and}\:q\neq4\\
n^{-1/d}+n^{-(q-2)/2q} & \mathrm{if}\:d>4\:\mathrm{and}\:q\neq d/(d-2)
\end{cases}.
\end{align}
One can conclude by setting $C_{d, q, \mu, K}\coloneqq 8(C_{d,q,\mu}^{1/2}e_{K, \mu}^{*}\vee C_{d, q, \mu})$.
\end{proof}

\begin{remark}\label{remC1d}
When $d\geq4$, if $\frac{q-2}{q}>\frac{2}{d}$ i.e. $q>\frac{2d}{d-2}$,  Inequality (\ref{w2carre}) can be upper bounded as follows,
\begin{align}
&\mathbb{E}\,\mathcal{W}_{2}^{2}(\mu_{n}^{\omega}, \mu)\leq 2C_{d, q, \mu}n^{-1/d}\times\begin{cases}
n^{-\frac{1}{4}}\log(1+n) & \mathrm{if}\:d=4\:\mathrm{and}\:q\neq4\\
n^{-\frac{1}{d}}& \mathrm{if}\:d>4\:\mathrm{and}\:q\neq d/(d-2)
\end{cases}\nonumber\\
&\leq 2C_{d, q, \mu}K^{-1/d}\times\begin{cases}
n^{-\frac{1}{4}}\log(1+n) & \mathrm{if}\:d=4\:\mathrm{and}\:q\neq4\\
n^{-\frac{1}{d}} & \mathrm{if}\:d>4\:\mathrm{and}\:q\neq d/(d-2)
\end{cases},\nonumber
\end{align}
since we consider only $n\geq K$ and if $q>\frac{2d}{d-2}$, the term $n^{-(q-2)/2q}$ becomes negligible as $n$ grows. Consequently, (\ref{empriboundK}) can be bounded by 
\begin{align}\label{updesK}
\mathbb{E}\,&\mathcal{D}_{K, \mu}(x^{(n), \omega})-\inf_{x\in(\mathbb{R}^{d})^{K}}\mathcal{D}_{K, \mu}(x)\leq 4e_{K, \mu}^{*}\mathbb{E}\,\mathcal{W}_{2}(\mu_{n}^{\omega}, \mu)+4\,\mathbb{E}\,\mathcal{W}_{2}^{2}(\mu_{n}^{\omega}, \mu).\nonumber\\
&\leq8(C_{d,q,\mu}^{1/2}e_{K, \mu}^{*}\vee 2C_{d, q, \mu}K^{-1/d})\times\nonumber\\
&\;\;\;\begin{cases}
n^{-\frac{1}{4}}\big[(\log(1+n))^{\frac{1}{2}}+\log(1+n)\big] & \mathrm{if}\:d=4\:\mathrm{and}\:q\neq4\\
2n^{-\frac{1}{d}} & \mathrm{if}\:d>4\:\mathrm{and}\:q\neq d/(d-2)
\end{cases}.
\end{align}
By the non-asymptotic Zador theorem (\ref{nonasymptoticzador}), one has 
\[e_{K, \mu}^{*}\leq C_{d, q}(\mu)\sigma_{q}(\mu)K^{-1/d}\]
with $\sigma_{q}(\mu)=\min_{a\in\mathbb{R}^{d}}\left[\int_{\mathbb{R}^{d}}\left|\xi-a\right|^{q}\mu(d\xi)\right]^{1/q}$. Thus, 
Inequality (\ref{updesK}) can be upper-bounded as follows,  
\begin{align}
\mathbb{E}\,&\mathcal{D}_{K, \mu}(x^{(n), \omega})-\inf_{x\in(\mathbb{R}^{d})^{K}}\mathcal{D}_{K, \mu}(x)\leq 4e_{K, \mu}^{*}\mathbb{E}\,\mathcal{W}_{2}(\mu_{n}^{\omega}, \mu)+4\,\mathbb{E}\,\mathcal{W}_{2}^{2}(\mu_{n}^{\omega}, \mu).\nonumber\\
&\leq8K^{-1/d}\big(C_{d,q,\mu}^{1/2}C_{d,q}(\mu)\sigma_{q}(\mu)\vee 2C_{d, q, \mu}\big)\times\nonumber\\
&\;\;\;\begin{cases}
n^{-\frac{1}{4}}\big[(\log(1+n))^{\frac{1}{2}}+\log(1+n)\big] & \mathrm{if}\:d=4\:\mathrm{and}\:q\neq4\\
2n^{-\frac{1}{d}} & \mathrm{if}\:d>4\:\mathrm{and}\:q\neq d/(d-2)
\end{cases},\nonumber
\end{align}
from which one can remark that the constant $C_{d, q, \mu, K}$ in Proposition \ref{convrateemprical} is roughly decreasing as $K^{-1/d}$. 
\end{remark}}

A second upper bound of the clustering performance is provided in the following theorem. 
\begin{theorem}\label{performance}
Let $K\in\mathbb{N}^{*}$ be the quantization level. Let $\mu\in\mathcal{P}_{2}(\mathbb{R}^{d})$ with $\text{card}\,(\text{supp}(\mu))\geq K$ and let $\mu_{n}^{\omega}$ be the empirical measures of $\mu$ defined in (\ref{empi}), generated by i.i.d observations $X_{1}, ... , X_{n}, ...$.   We denote by $x^{(n),\omega}\in(\mathbb{R}^{d})^{K}$ an optimal quantizer of $\mu_{n}^{\omega}$ at level $K$. Then, 
\begin{enumerate}[(a)]
\item \emph{General upper bound of the performance}.  
\begin{equation}
\mathbb{E}\,\mathcal{D}_{K,\,\mu}(x^{(n),\omega})-\inf_{x\in(\mathbb{R}^{d})^{K}}\mathcal{D}_{K,\,\mu}(x)\leq \frac{2K}{\sqrt{n}} \Big[r_{2n}^{2}+\rho_{K}(\mu)^{2}+2r_{1}\big(r_{2n}+\rho_{K}(\mu)\big)\Big],
\end{equation}
where  $r_{n}\coloneqq \big\Vert\max_{1\leq i \leq n}\left| X_{i}\right|\big\Vert_{2}$ and $\rho_{K}(\mu)$ is the maximum radius of optimal quantizers of $\mu$, defined in (\ref{maxradius}). 
\item \emph{Asymptotic upper bound for distribution with polynomial tail.}  For $p>2$, if $\mu$ has a $c$-th polynomial tail with $c>d+p$, then 
\[\mathbb{E}\,\mathcal{D}_{K,\,\mu}(x^{(n),\omega})-\inf_{x\in(\mathbb{R}^{d})^{K}}\mathcal{D}_{K,\,\mu}(x)\leq \frac{K}{\sqrt{n}} \Big[C_{\mu, p}\,n^{2/p}+6K^{\frac{2(p+d)}{d(c-p-d)}\gamma_{K}}\Big],\]
where $C_{\mu, p}$ is a constant depending $\mu, p$ and $\lim_{K}\gamma_{K}=1$.
\item \emph{Asymptotic upper bound for distribution with hyper-exponential tail}. Recall that $\mu$ has a hyper-exponential tail if $\mu=f\cdot\lambda_{d}$ and there exists $\tau>0, \kappa,\vartheta>0, c>-d$ and $A>0$ such that $\forall \xi\in\mathbb{R}^{d}, \left|\xi\right|\geq A\Rightarrow\, f(\xi)=\tau \left| \xi\right|^{c}e^{-\vartheta\left|\xi\right|^{\kappa}}$. If $\kappa\geq2$, we can obtain a more precise upper bound of the performance 
\[\mathbb{E}\big[\mathcal{D}_{K,\,\mu}(x^{(n),\omega})-\inf_{x\in(\mathbb{R}^{d})^{K}}\mathcal{D}_{K,\,\mu}(x)\big]\leq C_{\vartheta, \kappa, \mu}\cdot \frac{K}{\sqrt{n}}\Big[ 1+ (\log n)^{2/\kappa}+\gamma_{K}(\log K)^{2/\kappa}\big(1+\frac{2}{d}\big)^{2/\kappa}\Big], \]
where $C_{\vartheta, \kappa, \mu}$ is a constant depending $\vartheta, \kappa, \mu$ and $\limsup_{K}\gamma_{K}=1$. \\
In particular, if $\mu=\mathcal{N}(m, \Sigma)$, the multidimensional normal distribution, we have
\[\mathbb{E}\big[\mathcal{D}_{K,\,\mu}(x^{(n),\omega})-\inf_{x\in(\mathbb{R}^{d})^{K}}\mathcal{D}_{K,\,\mu}(x)\big]\leq C_{\mu}\cdot \frac{K}{\sqrt{n}}\Big[ 1+ \log n+\gamma_{K}\cdot(\log K)\big(1+\frac{2}{d}\big)\Big], \]
where $\limsup_{K}\gamma_{K}=1$ and $C_{\mu}=24\cdot\big(1\vee \log2\mathbb{E}e^{\left| X\right|^{2}/4}\big)$ where $X$ is a random variable with distribution $\mu$. Moreover, when $\mu=\mathcal{N}(0, \mathbf{I}_{d})$, $C_{\mu}=24(1+\frac{d}{2})\cdot\log 2$.
\end{enumerate}
\end{theorem}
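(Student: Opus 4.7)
Plan. Part (a) is the core estimate; (b) and (c) will follow by inserting the polynomial- and hyper-exponential-tail bounds on $\rho_K(\mu)$ and on $r_n$ already recalled in the excerpt.

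For (a), I would fix any $x^*\in\mathrm{argmin}\,\mathcal{D}_{K,\mu}$, so $|x^*_k|\leq\rho_K(\mu)$; by Proposition~\ref{relation}(iii) applied to the compactly supported empirical measure, one may take $|x^{(n),\omega}_k|\leq M_n:=\max_{1\leq i\leq n}|X_i|$. Writing $\min_k|x-c_k|^2=|x|^2+\psi_c(x)$ with $\psi_c(x):=\min_k(-2\langle x,c_k\rangle+|c_k|^2)$, the quantizer inequality $\mathcal{D}_{K,\mu_n}(x^{(n),\omega})\leq\mathcal{D}_{K,\mu_n}(x^*)$ together with the cancellation of the $c$-independent $|X|^2$-contributions between $\mathcal{D}_{K,\mu}$ and $\mathcal{D}_{K,\mu_n}$ gives the pointwise bound
\[
\mathcal{D}_{K,\mu}(x^{(n),\omega})-\mathcal{D}_{K,\mu}(x^*)\leq\bigl[\Psi_\mu(x^{(n),\omega})-\Psi_{\mu_n}(x^{(n),\omega})\bigr]+\bigl[\Psi_{\mu_n}(x^*)-\Psi_\mu(x^*)\bigr],
\]
with $\Psi_\mu(c):=\mathbb{E}\psi_c(X)$, $\Psi_{\mu_n}(c):=\tfrac{1}{n}\sum_i\psi_c(X_i)$. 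Upper-bounding each bracket by the supremum of $|\Psi_\mu(c)-\Psi_{\mu_n}(c)|$ over the ball containing the corresponding quantizer and then taking expectations reduces (a) to estimating $\mathbb{E}\sup_{|c_k|\leq M_n}|\Psi_\mu(c)-\Psi_{\mu_n}(c)|$ and $\mathbb{E}\sup_{|c_k|\leq\rho_K(\mu)}|\Psi_\mu(c)-\Psi_{\mu_n}(c)|$.

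The key uniform estimate is that for any deterministic $R\geq 0$,
\[
\mathbb{E}\sup_{|c_k|\leq R}|\Psi_\mu(c)-\Psi_{\mu_n}(c)|\leq\frac{2K}{\sqrt n}\bigl(R^2+2Rr_1\bigr),
\]
obtained by (i) a ghost-sample symmetrization yielding the factor $2$, (ii) the fact that $\psi_c=\min_k g_{c_k}$ is a minimum of $K$ affine-in-$x$ pieces $g_{c_k}(x)=-2\langle x,c_k\rangle+|c_k|^2$, so the Rademacher complexity of $\{\psi_c:|c_k|\leq R\}$ is at most $K$ times that of the single affine class $\{g_c:|c|\leq R\}$ (the min is $1$-Lipschitz in $\ell^\infty$), and (iii) the elementary bounds $\mathbb{E}\bigl|\tfrac{1}{n}\sum_i\epsilon_iX_i\bigr|\leq r_1/\sqrt n$ and $\mathbb{E}\bigl|\tfrac{1}{n}\sum_i\epsilon_i\bigr|\leq 1/\sqrt n$ which control the affine class at radius $R$ by $(R^2+2Rr_1)/\sqrt n$. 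Applied with deterministic $R=\rho_K(\mu)$ this yields the $\rho_K$-piece. For the random $R=M_n$, I enlarge the constraint to $M_{2n}:=\max_{1\leq i\leq 2n}|Z_i|$ (combined $(X,X')$-sample), which is invariant under Rademacher sign-flips; the same estimate applies conditionally, and taking $\mathbb{E}$ with $\mathbb{E}M_{2n}^2=r_{2n}^2$ and $\mathbb{E}M_{2n}\leq r_{2n}$ produces $\frac{2K}{\sqrt n}(r_{2n}^2+2r_{2n}r_1)$. Summing the two pieces delivers (a).

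For (b) and (c), I plug tail estimates into (a). Under a $c$-th polynomial tail with $\mu\in\mathcal{P}_p$ and $p>2$, Markov gives $r_n\leq n^{1/p}\|X\|_p$, hence $r_{2n}^2=O(n^{2/p})$, while \ref{polynomialtail} supplies $\rho_K\leq K^{(p+d)/[d(c-p-d)]\gamma_K}$ for some $\gamma_K\to 1$; the cross-term $r_1(r_{2n}+\rho_K)$ is absorbed into the constants. Under a $(\vartheta,\kappa)$-hyper-exponential tail with $\kappa\geq 2$, $\mathbb{E}e^{\lambda|X|^\kappa}<\infty$ for some $\lambda>0$, so applying Markov to $\exp(\lambda\max_i|X_i|^\kappa)\leq\sum_i\exp(\lambda|X_i|^\kappa)$ yields $r_n\leq C_{\vartheta,\kappa,\mu}(\log n)^{1/\kappa}$, while \ref{upperbound} provides $\rho_K\leq 2\vartheta^{-1/\kappa}(1+2/d)^{1/\kappa}(\log K)^{1/\kappa}\gamma_K^{1/\kappa}$ asymptotically. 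For $\mu=\mathcal{N}(m,\Sigma)$, $\kappa=2$ and the explicit $C_\mu$ comes from choosing $\lambda=1/4$; for $\mathcal{N}(0,\mathbf{I}_d)$ the $\chi^2_d$ moment-generating function yields $\mathbb{E}e^{|X|^2/4}=2^{d/2}$, so $\log(2\mathbb{E}e^{|X|^2/4})=(1+d/2)\log 2$ and $C_\mu=24(1+d/2)\log 2$. The main technical obstacle is precisely the Rademacher control over the \emph{random} ball $\{|c_k|\leq M_n\}$: the ghost-sample enlargement $M_{2n}$ must be chosen so as to remain measurable with respect to the combined $(X_i,X'_i)$-sample and invariant under sign-flipping, so that the standard symmetrization-plus-conditional-Rademacher step goes through. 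Once this is set up, the remaining work reduces to bookkeeping of the quantizer-radius estimates from the Pag\`es asymptotic theorem and the classical maximum-of-i.i.d.\ bounds.
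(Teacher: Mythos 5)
Your proposal is correct and follows essentially the same route as the paper: the same decomposition into the affine part $\psi_c$, the same two-term splitting with the empirical optimizer localized in the ball of radius $\max_i|X_i|$ (enlarged to the ghost-sample maximum $M_{2n}$) and the population optimizer in the ball of radius $\rho_K(\mu)$, followed by symmetrization and a contraction argument yielding the factor $K$, then the same tail bounds for (b) and (c). The only presentational difference is that the paper makes your ``min of $K$ affine pieces costs a factor $K$'' step rigorous by an explicit induction on $K$ using $a\wedge b=\tfrac{a+b}{2}-\tfrac{|a-b|}{2}$ together with the scalar contraction principle, rather than appealing directly to the $\ell^\infty$-Lipschitzness of the minimum.
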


The proof of Theorem \ref{performance} relies on the Rademacher process theory. A Rademacher sequence $(\sigma_{i})_{i\in\{1,...,n\}}$ is a sequence of i.i.d  random variables with a symmetric $\{\pm 1\}$-valued Bernoulli distribution, independent of $(X_{1}, ..., X_{n})$ and we define the Rademacher process $\mathcal{R}_{n}(f), f\in\mathcal{F}$ by $\mathcal{R}_{n}(f)\coloneqq \frac{1}{n}\sum_{i=1}^{n}\sigma_{i}f(X_{i})$. Remark that  the Rademacher process $\mathcal{R}_{n}(f)$ depends on the sample $\{X_{1},..., X_{n}\}$ of the probability measure $\mu$. 

\begin{thm*}[Symmetrization inequalites]
For any class $\mathcal{F}$ of $\mu$-integrable functions, we have \[\mathbb{E}\left\Vert\mu_{n}-\mu\right\Vert_{\mathcal{F}}\leq 2 \mathbb{E}\left\Vert \mathcal{R}_{n}\right\Vert_{\mathcal{F}},\] 
where for a probability distribution $\nu$, $\left\Vert\nu\right\Vert_{\mathcal{F}}\coloneqq\sup_{f\in\mathcal{F}}\left|\nu(f)\right|\coloneqq\sup_{f\in\mathcal{F}}\left|\int_{\mathbb{R}^{d}}fd\nu\right|$ and $\left\Vert\mathcal{R}_{n}\right\Vert_{\mathcal{F}}\coloneqq \sup_{f
\in\mathcal{F}}\left| \mathcal{R}_{n}(f)\right|$. 
\end{thm*}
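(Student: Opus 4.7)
The plan is to use the classical \emph{symmetrization by a ghost sample} argument. First I would introduce, on an enlarged probability space, an independent copy $X_{1}', \dots, X_{n}'$ of $X_{1}, \dots, X_{n}$ (the ``ghost sample''), also i.i.d. with distribution $\mu$ and independent of the original sample, and denote by $\mu_{n}'$ its empirical measure. Since for every $f \in \mathcal{F}$ one has $\mu(f) = \mathbb{E}[\mu_{n}'(f)\mid X_{1}, \dots, X_{n}]$, I can write
\[
\|\mu_{n} - \mu\|_{\mathcal{F}} = \sup_{f \in \mathcal{F}} \bigl| \mu_{n}(f) - \mathbb{E}[\mu_{n}'(f)\mid X_{1}, \dots, X_{n}] \bigr|.
\]

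Second, I would pull the conditional expectation out of the absolute value and then out of the supremum using Jensen's inequality (the map $u \mapsto \sup_{f}|u(f)|$ is convex), obtaining
\[
\|\mu_{n} - \mu\|_{\mathcal{F}} \leq \mathbb{E}\!\left[\,\sup_{f \in \mathcal{F}} |\mu_{n}(f) - \mu_{n}'(f)| \,\Big|\, X_{1}, \dots, X_{n}\right],
\]
so that taking expectation yields the intermediate bound $\mathbb{E}\|\mu_{n} - \mu\|_{\mathcal{F}} \leq \mathbb{E}\|\mu_{n} - \mu_{n}'\|_{\mathcal{F}}$.

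Third comes the symmetrization step proper. I would introduce a Rademacher sequence $(\sigma_{i})_{1 \le i \le n}$ independent of both samples. Because $X_{i}$ and $X_{i}'$ are i.i.d., the random vector $\bigl(f(X_{i}) - f(X_{i}')\bigr)_{i=1}^{n}$ has a distribution that is invariant under arbitrary sign flips on the coordinates; hence it has the same law as $\bigl(\sigma_{i}(f(X_{i}) - f(X_{i}'))\bigr)_{i=1}^{n}$, jointly for all $f \in \mathcal{F}$. Therefore
\[
\mathbb{E}\|\mu_{n} - \mu_{n}'\|_{\mathcal{F}} = \mathbb{E}\,\sup_{f \in \mathcal{F}} \Bigl|\frac{1}{n}\sum_{i=1}^{n} \sigma_{i}\bigl(f(X_{i}) - f(X_{i}')\bigr)\Bigr|.
\]
Finally, splitting the sum by the triangle inequality for $\|\cdot\|_{\mathcal{F}}$ and using that $(\sigma_{i}, X_{i})$ and $(\sigma_{i}, X_{i}')$ have the same joint law gives
\[
\mathbb{E}\|\mu_{n} - \mu_{n}'\|_{\mathcal{F}} \leq 2\,\mathbb{E}\|\mathcal{R}_{n}\|_{\mathcal{F}},
\]
which is the desired inequality. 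The only subtle point is measurability of the suprema, which is automatic if $\mathcal{F}$ is countable (or separable in a suitable sense); otherwise one implicitly reads $\mathbb{E}$ as an outer expectation. This is the standard caveat in empirical-process theory and is the only place a delicate argument might be needed.
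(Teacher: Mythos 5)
Your proof is correct and is exactly the classical ghost-sample symmetrization argument; the paper does not prove this statement itself but defers to \cite{koltchinskii2011introduction}[Theorem 2.1] and \cite{van1996weak}[Lemma 2.3.1], whose proofs follow precisely the route you describe (ghost sample, conditional Jensen, sign-flip invariance, triangle inequality). Your closing remark about measurability and outer expectations is the same caveat handled in those references, so nothing is missing.
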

For the proof of the above theorem, we refer to \cite{koltchinskii2011introduction}[see Theorem 2.1]. Another more detailed reference is \cite{van1996weak}[see Lemma 2.3.1]. We will also introduce the \textit{Contraction principle} in the following theorem and we refer to \cite{boucheron2013concentration}[see Theorem 11.6] for the proof. 

\begin{thm*}[Contraction principle]
Let $x_{1}, ..., x_{n}$ be vectors whose real-valued components are indexed by $\mathcal{T}$, that is, $x_{i}=(x_{i,s})_{s\in\mathcal{T}}$. For each $i=1, ..., n$, let $\varphi_{i}: \mathbb{R}\rightarrow \mathbb{R}$ be a Lipschitz function such that $\varphi_{i}(0)=0$. Let $\sigma_{1}, ..., \sigma_{n}$ be independent Rademacher random variables and let $c_{L}=\max_{1\leq i\leq n}\sup_{\substack{x,y\in\mathbb{R}\\ x\neq y}}\left|\frac{\varphi_{i}(x)-\varphi_{i}(y)}{x-y}\right|$ be the uniform Lipschitz constant of the function $\varphi_{i}$. Then 
\begin{equation}\label{contraprin}
\mathbb{E}\Big[ \sup_{s\in\mathcal{T}}\sum_{i=1}^{n}\sigma_{i}\varphi_{i}(x_{i,s})\Big]\leq c_{L}\cdot\mathbb{E}\Big[\sup_{s\in\mathcal{T}}\sum_{i=1}^{n}\sigma_{i}x_{i,s}\Big].
\end{equation}
\end{thm*}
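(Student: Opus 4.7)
The plan is to reduce the inequality to a one-step contraction for a single Rademacher variable and then iterate by successively replacing each $\varphi_i$ by $c_L\cdot\mathrm{id}$. Define, for $0\le k\le n$,
\[
S_k := \mathbb{E}\sup_{s\in\mathcal{T}}\Bigl[\sum_{i=1}^{k} c_L\,\sigma_i\, x_{i,s}+\sum_{i=k+1}^{n}\sigma_i\,\varphi_i(x_{i,s})\Bigr],
\]
so that $S_0$ is exactly the left-hand side of \eqref{contraprin} and $S_n=c_L\,\mathbb{E}\sup_{s}\sum_{i=1}^{n}\sigma_i x_{i,s}$ is its right-hand side. It then suffices to establish $S_{k-1}\le S_k$ for every $1\le k\le n$.

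The one-step lemma I would prove is: for any real families $(a_s,y_s)_{s\in\mathcal{T}}$, any Lipschitz $\varphi:\mathbb{R}\to\mathbb{R}$ with Lipschitz constant at most $L$, and a single Rademacher variable $\sigma$,
\[
\mathbb{E}_\sigma\sup_{s}\bigl[a_s+\sigma\,\varphi(y_s)\bigr]\;\le\;\mathbb{E}_\sigma\sup_{s}\bigl[a_s+L\,\sigma\,y_s\bigr].
\]
By conditioning on the two values $\sigma=\pm1$, each side equals $\tfrac12\sup_{s_1,s_2}\bigl[a_{s_1}+a_{s_2}+\varphi(y_{s_1})-\varphi(y_{s_2})\bigr]$ and $\tfrac12\sup_{s_1,s_2}\bigl[a_{s_1}+a_{s_2}+L(y_{s_1}-y_{s_2})\bigr]$ respectively. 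The Lipschitz hypothesis yields $\varphi(y_{s_1})-\varphi(y_{s_2})\le L|y_{s_1}-y_{s_2}|$: if $y_{s_1}\ge y_{s_2}$, the pair $(s_1,s_2)$ on the left is dominated by the same pair on the right; if $y_{s_1}<y_{s_2}$, I swap the roles, bounding the pair $(s_1,s_2)$ on the left by the pair $(s_2,s_1)$ on the right. In both cases the supremum on the right dominates the left-hand quantity term by term, which proves the lemma.

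To pass from the lemma to the inductive step $S_{k-1}\le S_k$, I condition on the family $\{\sigma_j:j\ne k\}$ so that $a_s:=\sum_{i<k}c_L\,\sigma_i\, x_{i,s}+\sum_{i>k}\sigma_i\varphi_i(x_{i,s})$ is a deterministic function of $s$, and apply the lemma with $L=c_L$ and $y_s=x_{k,s}$, which is admissible since $\mathrm{Lip}(\varphi_k)\le c_L$. Taking expectation over $\{\sigma_j:j\ne k\}$ yields $S_{k-1}\le S_k$, and chaining these estimates for $k=1,\dots,n$ gives \eqref{contraprin}. The only genuinely nontrivial point is the index re-pairing $(s_1,s_2)\leftrightarrow(s_2,s_1)$ inside the one-step lemma; note in passing that the hypothesis $\varphi_i(0)=0$ is not actually used for this expected-supremum form of the inequality.
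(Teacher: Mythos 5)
Your argument is correct. The paper does not prove this statement itself --- it simply cites \cite{boucheron2013concentration}[Theorem 11.6] --- and your proof is essentially the standard argument given there: replace the $\varphi_i$ by $c_L\cdot\mathrm{id}$ one coordinate at a time, reducing each step (after conditioning on the other signs) to the two-point identity $\mathbb{E}_\sigma\sup_s[a_s+\sigma\varphi(y_s)]=\tfrac12\sup_{s_1,s_2}[a_{s_1}+a_{s_2}+\varphi(y_{s_1})-\varphi(y_{s_2})]$ and the swap $(s_1,s_2)\leftrightarrow(s_2,s_1)$ to absorb the absolute value from the Lipschitz bound. Your closing remark is also accurate: $\varphi_i(0)=0$ is not needed for this expected-supremum form (shifting $\varphi_i$ by a constant only adds a mean-zero term $\sigma_i c_i$), though it does matter for variants involving $\sup_s\bigl|\sum_i\sigma_i\varphi_i(x_{i,s})\bigr|$.
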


Remark that, if we consider  random variables $(Y_{1, s}, ..., Y_{n, s})_{s\in\mathcal{T}}$ independent of $(\sigma_{1}, ..., \sigma_{n})$ and for all $s\in\mathcal{T}$ and $i\in\{1, ..., n\}$, $Y_{i, s}$ is valued in $\mathbb{R}$, then (\ref{contraprin}) implies that 
\begin{align}\label{contraction}
\mathbb{E}\Big[ \sup_{s\in\mathcal{T}}\sum_{i=1}^{n}&\sigma_{i}\varphi_{i}(Y_{i,s})\Big]=\mathbb{E}\Big\{\mathbb{E}\Big[ \sup_{s\in\mathcal{T}}\sum_{i=1}^{n}\sigma_{i}\varphi_{i}(Y_{i,s})\mid (Y_{1, s}, ..., Y_{n, s})_{s\in\mathcal{T}}\Big]\Big\}\nonumber\\
\leq & c_{L}\cdot \mathbb{E}\Big\{\mathbb{E}\Big[ \sup_{s\in\mathcal{T}}\sum_{i=1}^{n}\sigma_{i}Y_{i,s}\mid (Y_{1, s}, ..., Y_{n, s})_{s\in\mathcal{T}}\Big]\Big\}\leq c_{L}\cdot \mathbb{E}\Big[ \sup_{s\in\mathcal{T}}\sum_{i=1}^{n}\sigma_{i}Y_{i,s}\big].
\end{align}
The proof of Theorem \ref{performance} is  inspired by that of Theorem 2.1 in \cite{biau2008performance}.
\begin{proof}[Proof of Theorem \ref{performance}] 
(a) In order to simplify the notation, we will denote by $\mathcal{D}$ (\textit{respectively} $\mathcal{D}_{n}$) instead of $\mathcal{D}_{K,\,\mu}$ (\textit{resp.} $\mathcal{D}_{K,\,\mu_{n}}$)  the distortion function of $\mu$ (\textit{resp.} $\mu_{n}$). For any $c=(c_{1}, ..., c_{K})\in(\mathbb{R}^{d})^{K}$, note that the distortion function $\mathcal{D}(c)$ of $\mu$ can be written as 
\[\mathcal{D}(c)=\mathbb{E}\big[\min_{1\leq k\leq K} \left| X-c_{k}\right|^{2}\big]=\mathbb{E}\big[\left|X\right|^{2}+\min_{1\leq k\leq K}(-2\langle X | c_{k}\rangle+\left|c_{k}\right|^{2})\big].\]
We define $\overline{\mathcal{D}}(c)\coloneqq \min_{1\leq k\leq K} \big( -2\langle X | c_{k}\rangle +\left|c_{k}\right|^{2}\big)$. Similarly, for the distortion function $\mathcal{D}_{n}$ of the empirical measure $\mu_{n}$, 
\[\mathcal{D}_{n}(c)=\frac{1}{n}\sum_{i=1}^{n}\min_{1\leq k\leq K}\left| X_{i}-c_{k}\right|^{2}=\frac{1}{n}\sum_{i=1}^{n}\left|X_{i}\right|^{2}+\min_{1\leq k\leq K}\big(-\frac{2}{n}\sum_{i=1}^{n}\langle X_{i} | c_{k}\rangle+\left|c_{k}\right|^{2}\big),\]
we define $\overline{\mathcal{D}}_{n}(c)\coloneqq \min_{1\leq k\leq K}\big(-\frac{2}{n}\sum_{i=1}^{n}\langle X_{i} | c_{k}\rangle+\left|c_{k}\right|^{2}\big)$. We will drop $\omega$ in $x^{(n), \omega}$ to alleviate the notation throughout the proof. Let $x\in \mathrm{argmin}\,\mathcal{D}_{K, \mu}$.
It follows that 
\begin{align}\label{25}
\mathbb{E}\big[\mathcal{D}(x^{(n)})-\mathcal{D}(x)\big] &= \mathbb{E}\big[\overline{\mathcal{D}}(x^{(n)})-\overline{\mathcal{D}}(x)\big]=\mathbb{E}\big[\overline{\mathcal{D}}(x^{(n)})-\overline{\mathcal{D}}_{n}(x^{(n)})\big]+\mathbb{E}\big[\overline{\mathcal{D}}_{n}(x^{(n)})-\overline{\mathcal{D}}(x)\big]\nonumber\\
&\leq\mathbb{E}\big[\overline{\mathcal{D}}(x^{(n)})-\overline{\mathcal{D}}_{n}(x^{(n)})\big]+\mathbb{E}\big[\overline{\mathcal{D}}_{n}(x)-\overline{\mathcal{D}}(x)\big].
\end{align}
Define for $\eta, x\in\mathbb{R}^{d}$, $f_{\eta}(x)=-2\langle \eta | x\rangle+\left| \eta\right|^{2}$. 

\smallskip
\noindent\textit{Part (i): Upper bound of $\mathbb{E}[\overline{\mathcal{D}}(x^{(n)})-\overline{\mathcal{D}}_{n}(x^{(n)})]$}. Let $R_{n}(\omega)\coloneqq \max_{1\leq i\leq n}\left| X_{i}(\omega)\right|$. Remark that for every $\omega\in\Omega$, $R_{n}(\omega)$ is invariant with the respect to all permutations of the components of $(X_{1}, ..., X_{n})$. Let $B_{R}$ denote the ball centred at 0 with radius $R$. Then, owing to Proposition \ref{relation}-(iii), $x^{(n)}=(x^{(n)}_{1}, ..., x^{(n)}_{K})\in B_{R_{n}}^{K}$. Hence, 
\begin{align}\label{26}
\mathbb{E}\,[\overline{\mathcal{D}}(x^{(n)})-&\overline{\mathcal{D}}_{n}(x^{(n)})]\leq \mathbb{E}\,\sup_{c\,\in B_{R_{n}}^{K}}\big(\overline{\mathcal{D}}(c)-\overline{\mathcal{D}}_{n}(c)\big)\nonumber\\
=&\; \mathbb{E}\,\big[\sup_{c\,\in B_{R_{n}}^{K}}\big( \mathbb{E}\min_{1\leq k\leq K}f_{c_{k}}(X) - \frac{1}{n}\sum_{i=1}^{n}\min_{1\leq k\leq K}f_{c_{k}}(X_{i})\big)\big]\nonumber\\
=&\; \mathbb{E}\,\Big{[}\sup_{c\,\in B_{R_{n}}^{K}}\mathbb{E} \big[ \frac{1}{n}\sum_{i=1}^{n} \min_{1\leq k\leq K} f_{c_{k}}(X'_{i}) - \frac{1}{n}\sum_{i=1}^{n} \min_{1\leq k\leq K} f_{c_{k}}(X_{i})   \big|X_{1}, ..., X_{n}\big]\Big{]},
\end{align}
where $X'_{1}, ..., X'_{n}$ are i.i.d random variable with the distribution $\mu$, independent of $(X_{1}, ..., X_{n})$. Let $R_{2n}\coloneqq \max_{1\leq i \leq n}\left|X_{i}\right|\vee\left|X'_{i}\right|$, then  (\ref{26}) becomes
\begin{align}
\mathbb{E}\,[\overline{\mathcal{D}}(x^{(n)})-&\overline{\mathcal{D}}_{n}(x^{(n)})]\leq \mathbb{E}\,\Big{[}\sup_{c\,\in B_{R_{2n}}^{K}}\mathbb{E} \big[ \frac{1}{n}\sum_{i=1}^{n} \min_{1\leq k\leq K} f_{c_{k}}(X'_{i}) - \frac{1}{n}\sum_{i=1}^{n} \min_{1\leq k\leq K} f_{c_{k}}(X_{i})   \big|X_{1}, ..., X_{n}\big]\Big{]}\nonumber\\
\leq &\; \mathbb{E}\,\Big{[}\mathbb{E}\big[ \sup_{c\,\in B_{R_{2n}}^{K}} \big( \frac{1}{n}\sum_{i=1}^{n} \min_{1\leq k\leq K} f_{c_{k}}(X'_{i}) - \frac{1}{n}\sum_{i=1}^{n} \min_{1\leq k\leq K} f_{c_{k}}(X_{i}) \big)  \big|X_{1}, ..., X_{n}\big]\Big{]}\nonumber\\
= & \; \mathbb{E} \big[  \sup_{c\,\in B_{R_{2n}}^{K}}  \frac{1}{n}\sum_{i=1}^{n} \big( \min_{1\leq k\leq K} f_{c_{k}}(X'_{i}) - \min_{1\leq k\leq K} f_{c_{k}}(X_{i}) \big)  \big].
\end{align}
The distribution of $(X_{1}, ..., X_{n}, X'_{1}, ..., X'_{n})$ and that of $R_{2n}$ are invariant with the respect to all permutation of the components in $(X_{1}, ..., X_{n}, X'_{1}, ..., X'_{n})$. Hence, 
\begin{align}\label{sigmasign}
\mathbb{E}\,[\overline{\mathcal{D}}(x^{(n)})-&\overline{\mathcal{D}}_{n}(x^{(n)})] = \mathbb{E}\big[ \sup_{c\,\in B_{R_{2n}}^{K}}\frac{1}{n}\sum_{i=1}^{n}\sigma_{i}\big( \min_{1\leq k\leq K} f_{c_{k}}(X'_{i}) - \min_{1\leq k\leq K} f_{c_{k}}(X_{i}) \big) \big] \nonumber\\
\leq &\; \mathbb{E}\big[\sup_{c\,\in B_{R_{2n}}^{K}} \frac{1}{n} \sum_{i=1}^{n}\sigma_{i}\min_{1\leq k\leq K} f_{c_{k}}(X'_{i})\big] +\mathbb{E}\big[\sup_{c\,\in B_{R_{2n}^{K}}} \frac{1}{n} \sum_{i=1}^{n}\sigma_{i}\min_{1\leq k\leq K} f_{c_{k}}(X_{i})\big] \nonumber\\
=&\; 2\mathbb{E}\big[\sup_{c\,\in B_{R_{2n}}^{K}} \frac{1}{n} \sum_{i=1}^{n}\sigma_{i}\min_{1\leq k\leq K} f_{c_{k}}(X_{i})\big].
\end{align}
In the second line of (\ref{sigmasign}), we can change the sign before the second term since $-\sigma_{i}$ has the same distribution of $\sigma_{i}$, and we will continue to use this property throughout the proof. Let $\displaystyle S_{K}=\mathbb{E}\Big[\sup_{c\,\in B_{R_{2n}}^{K}} \frac{1}{n} \sum_{i=1}^{n}\sigma_{i}\min_{1\leq k\leq K} f_{c_{k}}(X_{i})\Big]$ {\color{black}and we provide an upper bound for $S_{K}$ by induction on $K$ in what follows. }

\vspace{0.3cm}
$\blacktriangleright$ For ${K}=1$,
\begin{align}\label{29}
S_{1} =&\; \mathbb{E}\big[ \sup_{c\,\in B_{R_{2n}}}\frac{1}{n} \sum_{i=1}^{n}\sigma_{i}\min_{1\leq k\leq K} f_{c}(X_{i})\big]=\mathbb{E}\big[\sup_{c\,\in B_{R_{2n}}}\frac{1}{n}\sum_{i=1}^{n}\sigma_{i}\big( -2\langle c | X_{i}\rangle+\left|c\right|^{2}\big)\big]\nonumber\\
\leq&\;2\,\mathbb{E}\big[\sup_{c\,\in B_{R_{2n}}}\frac{1}{n}\sum_{i=1}^{n}\sigma_{i}\langle c | X_{i}\rangle\big]+\mathbb{E}\big[\sup_{c\,\in B_{R_{2n}}}\frac{1}{n}\sum_{i=1}^{n}\sigma_{i}\left|c\right|^{2}\big]\nonumber\\
\leq&\; \frac{2}{n}\mathbb{E}\Big[\sup_{c\,\in B_{R_{2n}}}\langle c | \sum_{i=1}^{n}\sigma_{i} X_{i}\rangle\Big]+\frac{1}{n}\mathbb{E}\Big[ \left| \sum_{i=1}^{n}\sigma_{i}\right|\cdot\left|R_{2n}\right|^{2}\Big]\nonumber\\
\leq&\; \frac{2}{n}\mathbb{E}\Big[\sup_{c\,\in B_{R_{2n}}}\left|\sum_{i=1}^{n}\sigma_{i}X_{i}\right|\cdot\left|c\right|\Big]+\frac{1}{n}\mathbb{E} \left| \sum_{i=1}^{n}\sigma_{i}\right|\cdot\mathbb{E}\left|R_{2n}\right|^{2} \nonumber\\
&\text{(by Cauchy-Schwarz inequality and independence of $\sigma_{i}$ and $X_{i}$)}\nonumber\\
\leq&\; \frac{2}{n}\left\Vert\sum_{i=1}^{n}\sigma_{i}X_{i}\right\Vert_{2}\cdot \left\Vert R_{2n}\right\Vert_{2}+\frac{1}{n} \left\Vert\sum_{i=1}^{n}\sigma_{i}\right\Vert_{2}^{2}\cdot \left\Vert R_{2n}\right\Vert_{2}^{2}\nonumber\\
\leq&\; \frac{2}{n}\sqrt{n}\left\Vert X_{1}\right\Vert_{2}\cdot\left\Vert R_{2n}\right\Vert_{2}+\frac{1}{\sqrt{n}}\left\Vert R_{2n}\right\Vert_{2}^{2}\leq \frac{\left\Vert R_{2n}\right\Vert_{2}}{\sqrt{n}}\big(2\left\Vert X_{1}\right\Vert_{2}+\left\Vert R_{2n}\right\Vert_{2}\big).
\end{align}
The first inequality of the last line of  (\ref{29}) follows from $\mathbb{E}\left|\sum_{i=1}^{n}\sigma_{i}X_{i}\right|^{2}=\mathbb{E}\sum_{i=1}^{n}\sigma_{i}^{2}X_{i}^{2}=n\mathbb{E}X_{1}^{2}$ since the $(\sigma_{1}, ..., \sigma_{n})$ is independent of $(X_{1}, ..., X_{n})$ and $\mathbb{E}\,\sigma_{i}=0$. For $n\in\mathbb{N}^{*}$, define $r_{n}\coloneqq \left\Vert \max_{1\leq i\leq n} \left| Y_{i}\right|\right\Vert_{2}$, where $Y_{1}, ..., Y_{n}$ are i.i.d random variables with probability distribution $\mu$. Hence, $r_{2n}=\left\Vert R_{2n}\right\Vert_{2}$, since $(Y_{1}, ..., Y_{2n})$ has the same distribution as $(X_{1}, ..., X_{n}, X'_{1}, ..., X'_{n})$. Therefore, 
\[S_{1}\leq \frac{r_{2n}}{\sqrt{n}}\big(2\left\Vert X_{1}\right\Vert_{2}+r_{2n}\big).\]

$\blacktriangleright$ For $K=2$, 
\begin{align}
S_{2}=&\;\mathbb{E}\big[\sup_{c=(c_{1}, c_{2})\in B_{R_{2n}}^{2}}\frac{1}{n}\sum_{i=1}^{n}\sigma_{i}\big(f_{c_{1}}(X_{i}) \wedge f_{c_{2}}(X_{i})\big)\big]\nonumber\\
=&\;\frac{1}{2}\mathbb{E}\Big[ \sup_{c\,\in B_{R_{2n}}^{2}}\frac{1}{n}\sum_{i=1}^{n}\sigma_{i}\big(f_{c_{1}}(X_{i}) +f_{c_{2}}(X_{i})-\left|f_{c_{1}}(X_{i}) -f_{c_{2}}(X_{i}) \right| \big)\Big] (\text{as $a\wedge b=\frac{a+b}{2}-\frac{\left|a-b\right|}{2}$})\nonumber\\
\leq &\;\frac{1}{2}\Big{\{}\mathbb{E}\big[\sup_{c\,\in B_{R_{2n}}^{2}} \frac{1}{n}\sum_{i=1}^{n}\sigma_{i}\big(f_{c_{1}}(X_{i}) +f_{c_{2}}(X_{i})\big)\big] +\mathbb{E}\big[\sup_{c\,\in B_{R_{2n}}^{2}} \frac{1}{n}\sum_{i=1}^{n}\sigma_{i}\left|f_{c_{1}}(X_{i})-f_{c_{2}}(X_{i})\right|\big]\Big{\}}\nonumber\\
\leq & \;\frac{1}{2}\Big\{2S_{1}+\mathbb{E}\big[\sup_{c\,\in B_{R_{2n}}^{2}} \frac{1}{n}\sum_{i=1}^{n}\sigma_{i}\big(f_{c_{1}}(X_{i})-f_{c_{2}}(X_{i})\big)\big]\Big\}\;\;\;\text{\big(by (\ref{contraction})\big)}\nonumber\\
\leq &\; \frac{1}{2}\Big\{2S_{1}+\mathbb{E}\big[\sup_{c_{1}\in B_{R_{2n}}}\frac{1}{n}\sum_{i=1}^{n}\sigma_{i}f_{c_{1}}(X_{i})\big]+\mathbb{E}\big[\sup_{c_{2}\in B_{R_{2n}}}\frac{1}{n}\sum_{i=1}^{n}\sigma_{i}f_{c_{2}}(X_{i})\big] \Big\}\leq 2S_{1}.
\end{align}

$\blacktriangleright$ Next, we will show by induction that $S_{K}\leq KS_{1}$ for every $K\in\mathbb{N}^{*}$. Assume that $S_{K}\leq KS_{1}$, for $K+1$,
\begin{align}
S_{K+1}= &\;\mathbb{E}\big[\sup_{c\,\in B_{R_{2n}}^{K+1}}\frac{1}{n}\sum_{i=1}^{n}\sigma_{i}\min_{1\leq k\leq K+1} f_{c_{k}}(X_{i})\big]\nonumber\\
=& \;\mathbb{E}\big[\sup_{c\,\in B_{R_{2n}}^{K+1}}\frac{1}{n}\sum_{i=1}^{n}\sigma_{i}\big(\min_{1\leq k\leq K} f_{c_{k}}(X_{i})\wedge f_{c_{K+1}}(X_{i})\big)\big]\nonumber\\
\leq&\;\frac{1}{2}  \mathbb{E}\Big\{ \sup_{c\,\in B_{R_{2n}}^{K+1}} \frac{1}{n}\sum_{i=1}^{n}\sigma_{i} \Big[\big(\min_{1\leq k\leq K} f_{c_{k}}(X_{i})+ f_{c_{K+1}}(X_{i})\big)-\left|\min_{1\leq k\leq K} f_{c_{k}}(X_{i})- f_{c_{K+1}}(X_{i})\right|  \Big]\Big\}\nonumber\\
\leq &\; \frac{1}{2} \mathbb{E}\Big\{\sup_{c\,\in B_{R_{2n}}^{K+1}} \frac{1}{n}\sum_{i=1}^{n}\sigma_{i} \big(\min_{1\leq k\leq K} f_{c_{k}}(X_{i})+ f_{c_{K+1}}(X_{i})\big)\nonumber\\&\;\;\;\;+ \sup_{c\,\in B_{R_{2n}}^{K+1}}\frac{1}{n}\sum_{i=1}^{n}\sigma_{i}\left|\min_{1\leq k\leq K} f_{c_{k}}(X_{i})- f_{c_{K+1}}(X_{i})\right| \Big\}\nonumber\\
\leq &\; \frac{1}{2}(S_{K}+S_{1}+S_{K}+S_{1})\leq S_{K}+S_{1}\leq (K+1)S_{1}. 
\end{align}
Hence,
\begin{equation}
\mathbb{E}\,[\overline{\mathcal{D}}(x^{(n)})-\overline{\mathcal{D}}_{n}(x^{(n)})] \leq 2S_{K}\leq 2KS_{1}\leq \frac{2K\cdot r_{2n}}{\sqrt{n}}\big(2\left\Vert X_{1}\right\Vert_{2}+r_{2n}\big).
\end{equation}

\noindent\textit{Part (ii): Upper bound of $\mathbb{E}\,[\overline{\mathcal{D}}_{n}(x)-\overline{\mathcal{D}}(x)]$}. As $x=(x_{1}, ..., x_{K})$ is an optimal  quantizer of $\mu$, we have $\max_{1\leq k \leq K}\left| x_{k}\right|\leq \rho_{K}(\mu)$ owing to the definition of $\rho_{K}(\mu)$ in (\ref{maxradius}). Consequently, 
\[\mathbb{E}\big[\overline{\mathcal{D}}_{n}(x)-\overline{\mathcal{D}}(x)\big]\leq \mathbb{E}\sup_{c\,\in B_{\rho_{K}(\mu)}^{K}}\big[\overline{\mathcal{D}}_{n}(c)-\overline{\mathcal{D}}(c)\big]\]
By the same reasoning of Part (I), we have $\mathbb{E}\big[\overline{\mathcal{D}}_{n}(x)-\overline{\mathcal{D}}(x)\big]\leq \frac{2K}{\sqrt{n}}\rho_{K}(\mu)\big(2\left\Vert X_{1}\right\Vert_{2}+\rho_{K}(\mu)\big)$.
Hence 
\begin{align}\label{geneperformance}
\mathbb{E}\big[\mathcal{D}(x^{(n)})-\mathcal{D}(x)\big]&\leq\frac{2K}{\sqrt{n}}r_{2n}\big(2\left\Vert X_{1}\right\Vert_{2}+r_{2n}\big)+\frac{2K}{\sqrt{n}}\rho_{K}(\mu)\big(2\left\Vert X_{1}\right\Vert_{2}+\rho_{K}(\mu)\big)\nonumber\\
&\leq\frac{2K}{\sqrt{n}}\Big[r_{2n}^{2}+\rho^{2}_{K}(\mu)+2r_{1}\big(r_{2n}+\rho_{K}(\mu)\big)\Big].\end{align}

The proof of $(b)$ and $(c)$ is postponed in Appendix E. 
\end{proof}

\smallskip
\section{Appendix}

\subsection{Appendix A:  Proof of Pollard's Theorem}
\begin{proof}[Proof of Pollard's Theorem]
Since the quantization level $K$ is fixed, in this proof, we drop the  subscript $K$ of the distortion function and denote by $\mathcal{D}_{n}$ (\textit{respectively,} $\mathcal{D}_{\infty})$  the distortion function of $\mu_{n}$ (\textit{resp.} $\mu_{\infty}$).

We know $x^{(n)}\in\mathrm{argmin}\;\mathcal{D}_{n}$ owing to Proposition \ref{relation}, that is, for all $y\in(y_{1}, ..., y_{K})\in(\mathbb{R}^{d})^{K}$, we have $\mathcal{D}_{n}(x^{(n)})\leq \mathcal{D}_{n}(y)$. For every fixed $y=(y_{1}, ..., y_{K})$, we have $\mathcal{D}_{n}(y)\rightarrow\mathcal{D}_{\infty}(y)$ after (\ref{06}) so that 
\begin{equation}\label{limsupone}
\limsup_{n}\;\mathcal{D}_{n}(x^{(n)})\leq \inf_{y\in(\mathbb{R}^{d})^{K}}\mathcal{D}_{\infty}(y).
\end{equation}

Assume that there exists an index set $\mathcal{I}\;{\subset}\;\{1,... ,K\}$ and $\mathcal{I}^{c}\neq\varnothing$ such that $(x_{i}^{(n)})_{i\in\mathcal{I}, n\geq 1}$ is bounded and $(x_{i}^{(n)})_{i\in\mathcal{I}^{c}, n\geq 1}$ is not bounded. Then there exists a subsequence $\psi(n)$ of $n$ such that \[\begin{cases}
x_{i}^{\psi(n)}\rightarrow \widetilde{x}_{i}^{(\infty)}, & i\in\mathcal{I},\\
\left|x_{i}^{\psi(n)}\right|\rightarrow +\infty, & i\in\mathcal{I}^{c}.
\end{cases}\]

After (\ref{06}), we have $\mathcal{D}_{\psi(n)}(x^{(\psi(n))})^{1/2}\geq \mathcal{D}_{\infty}(x^{(\psi(n))})^{1/2}-\mathcal{W}_{2}(\mu_{\psi(n)}, \mu_{\infty})$. Hence,
\[\liminf_{n}\mathcal{D}_{\psi(n)}(x^{(\psi(n))})^{1/2}\geq \liminf_{n}\mathcal{D}_{\infty}(x^{(\psi(n))})^{1/2}\]
so that
\begin{align}\label{thisone}
\liminf_{n}\,\mathcal{D}_{\psi(n)}(x^{(\psi(n))})^{1/2}\geq&\liminf_{n}\mathcal{D}_{\infty}(x^{(\psi(n))})^{1/2}\nonumber\\=& \big[\liminf_{n} \int \min_{i\in\{1,...,K\}}\left|x_{i}^{(\psi(n))}-\xi\right|^{2}\mu_{\infty}(d\xi)\big]^{1/2} \nonumber\\
\geq&\big[\int\liminf_{n}\min_{i\in\{1,...,K\}}\left|x_{i}^{(\psi(n))}-\xi\right|^{2}\mu_{\infty}(d\xi)\big]^{1/2}\nonumber\\
=&\big[\int \min_{i\in\mathcal{I}}\left|x_{i}^{(\infty)}-\xi\right|^{2}\mu_{\infty}(d\xi)\big]^{1/2},
\end{align}
where we used Fatou's Lemma in the third line. 
Thus, (\ref{limsupone}) and (\ref{thisone}) imply that 
\begin{equation}\label{otherwise}
\int \min_{i\in\mathcal{I}}\left|x_{i}^{(\infty)}-\xi\right|^{2}\mu_{\infty}(d\xi)\leq\inf_{y\in(\mathbb{R}^{d})^{K}}\mathcal{D}_{\infty}(y).
\end{equation}
This implies that $\mathcal{I}=\{1,... ,K\}$ after Proposition \ref{relation} ({otherwise, (\ref{otherwise}) implies that $e^{\left|\mathcal{I}\right|, *}(\mu_{\infty})\leq e^{K, *}(\mu_{\infty})$ with $\left|\mathcal{I}\right|<K$, which is contradictory to Proposition \ref{relation}-(i)}). Therefore, $(x^{(n)})$ is bounded and any limiting point $x^{(\infty)}\in \mathrm{argmin}_{x\in(\mathbb{R}^{d})^{K}}\mathcal{D}_{\infty}(x)$. \end{proof}

\subsection{Appendix B:  Proof of Proposition \ref{relation} - (iii)}

{\color{black}We define the \textit{open Vorono\"i cell} generated by $x_{i}$ with respect to the Euclidean norm $|\cdot|$ by 
\begin{equation}\label{voroopendef}
V^{o}_{x_{i}}(x)=\big\{\xi\in\mathbb{R}^{d}\,\,\big|\,\left|\xi-x_{i}\right|<\min_{1\leq j\leq K, j\neq i}\left|\xi-x_{j}\right|\big\}.
\end{equation}
It follows from \cite{graf2000foundations}[see Proposition 1.3] that $\mathrm{int}V_{x_{i}}(x)=V_{x_{i}}^{o}(x)$, where $\mathrm{int}A$ denotes the interior of a set $A$. Moreover, if we denote by $\lambda_{d}$ the Lebesgue measure on $\mathbb{R}^{d}$, we have $\lambda_{d}\big(\partial V_{x_{i}}(x)\big)=0$, where $\partial A$ denotes the boundary of $A$ (see \cite{graf2000foundations}[Theorem 1.5]). If $\mu\in\mathcal{P}_{2}(\mathbb{R}^{d})$ and $x^{*}$ is an optimal quantizer of $\mu$, even if $\mu$ is not absolutely continuous with the respect of $\lambda_{d}$, we have $\mu\big(\partial V_{x_{i}}(x^{*})\big)=0$ for all $i\in\{1, ..., K\}$ (see \cite{graf2000foundations}[Theorem 4.2]).}

\begin{proof}
Assume that there exists an $x^{*}=(x_{1}^{*}, ..., x_{K}^{*})\in G_{K}(\mu)$ in which there exists $k\in\{1, ..., K\}$ such that $x_{k}^{*}\notin\mathcal{H}_{\mu}$. 

\smallskip
\noindent \textit{Case (I): $\mu\big(V_{x_{k}^{*}}^{o}(\Gamma^{*}) \cap \mathrm{supp}(\mu)\big)=0$.}
The distortion function can be written as 
\begin{align}
\mathcal{D}_{K,\,\mu}(x^{*})&=\sum_{i=1}^{K}\int_{C_{x_{i}}(x)}\left|\xi-x^{*}_{i}\right|^{2}\mu(d\xi)=\sum_{i=1}^{K}\int_{V^{o}_{x_{i}}(x)}\left|\xi-x^{*}_{i}\right|^{2}\mu(d\xi)\nonumber\\
&\text{(since $x^{*}$ is optimal and $\left|\cdot\right|$ is Euclidean, $\mu\big(\partial V_{x_{i}}(\Gamma^{*})\big)=0$ and $\mathrm{int} V_{x_{i}}(\Gamma)=V_{x_{i}}^{o}(\Gamma)$)}\nonumber\\
&=\sum_{i=1, i\neq k}^{K}\int_{V^{o}_{x_{i}}(x)}\left|\xi-x^{*}_{i}\right|^{2}\mu(d\xi)=\mathcal{D}_{K,\,\mu}(\widetilde{x}),
\end{align}
where $\widetilde{x}=(x_{1}^{*}, ..., x_{k-1}^{*}, x_{k+1}^{*}, ..., x_{K}^{*})$. Therefore, $\widetilde{\Gamma}=\{x_{1}^{*}, ..., x_{k-1}^{*}, x_{k+1}^{*}, ..., x_{K}^{*}\}$ is also a $K$-level optimal  quantizer with $\mathrm{card}(\widetilde{\Gamma})<K$, contradictory to Proposition \ref{relation} - (i).

\smallskip
\noindent \textit{Case (II): $\mu\big(V_{x_{k}^{*}}^{o}(\Gamma^{*})\cap\mathrm{supp}(\mu)\big)>0$.} Since $x_{k}^{*}\neq \mathcal{H}_{\mu}$, there exists a hyperplane $H$ strictly {separating} $x_{k}^{*}$ and ${\mathcal{H}_{\mu}}$. Let $\hat{x}_{k}^{*}$ be the orthogonal projection of $x_{k}^{*}$ on $H$. For any $z\in \mathcal{H}_{\mu}$, let $b$ denote {the point} in the segment joining $z$ and $x_{k}^{*}$ which lies on $H$, then $\langle b-\hat{x}_{k}^{*} | x_{k}^{*}-\hat{x}_{k}^{*}\rangle=0$. Hence, 
\[\left|x_{k}^{*}-b\right|^{2}=\left|\hat{x}_{k}^{*}-b\right|^{2}+\left|x_{k}^{*}-\hat{x}_{k}^{*}\right|^{2}>\left|\hat{x}_{k}^{*}-b\right|^{2}.\]
Therefore, {$\left|z-\hat{x}_{k}^{*}\right|\leq \left| z-b\right|+\left|b-\hat{x}_{k}^{*}\right|<\left| z-b\right|+\left|x_{k}^{*}-b\right|=\left|z-x_{k}^{*}\right|$}. 

Let $B(x, r)$ denote the ball on $\mathbb{R}^{d}$ centered at $x$ with radius $r$. Since $\mu\big(V_{x_{k}^{*}}^{o}(\Gamma^{*})\cap\mathrm{supp}(\mu)\big)>0$, there exists $\alpha\in V_{x_{k}^{*}}^{o}(\Gamma^{*})\cap\mathrm{supp}(\mu)$ such that $\exists\, r\geq 0$, $\mu\big(B(\alpha, r)\big)>0$ (when $r=0$, $B(\alpha, r)=\{r\}$). Moreover, 
\begin{equation}\label{label}
\forall \beta\in B(\alpha, r),\quad \left|\beta-\hat{x}_{k}^{*}\right|<\left|\beta-x_{k}^{*}\right|<\min_{i\neq k}\left|\beta-\hat{x}_{i}^{*}\right|. 
\end{equation}
Let $\hat{x}\coloneqq (x_{1}^{*}, ..., x_{k-1}^{*},\hat{x}_{k}^{*}, x_{k+1}^{*}, ..., x_{K}^{*})$, (\ref{label}) implies $\mathcal{D}_{K,\,\mu}(\hat{x})<\mathcal{D}_{K,\,\mu}(x^{*})$. This is contradictory with the fact that $x^{*}$ is an optimal quantizer. Hence, $x^{*}\in\mathcal{H}_{\mu}$.
\end{proof}

\subsection{Appendix C:  Proof of Proposition \ref{contihg}}\label{appenC}

We use Lemma 11 in \cite{fort1995convergence} to compute the Hessian matrix $H_{\mathcal{D}_{K,\,\mu}}$ of $\mathcal{D}_{K,\,\mu}$. 
\begin{lemma}[Lemma 11 in \cite{fort1995convergence}]\label{lemmajean}
Let $\varphi$ be a countinous $\mathbb{R}$-valued function defined on $[0,1]^{d}$. For every $x\in D_{K}\coloneqq\big\{y\in\big([0,1]^{d}\big)^{K}\mid y_{i}\neq y_{j} \;\text{if}\; i\neq j \big\}$, let $\Phi_{i}(x)\coloneqq \int_{V_{i}(x)}\varphi(\omega)d\omega$. Then $\Phi_{i}$ is continuously differentiable on $D_{K}$ and 
\begin{align}
\forall i\neq j, \;\;&\frac{\partial\Phi_{i}}{\partial x_{j}}(x)=\int_{V_{i}(x)\cap V_{j}(x)}\varphi(\xi)\big\{\frac{1}{2}\overrightarrow{n}_{x}^{ij}+\frac{1}{\left| x_{j}-x_{i}\right|}\times (\frac{x_{i}+x_{j}}{2}-\xi)\big\}\lambda_{x}^{ij}(d\xi)\\
\text{and}\;\;&\frac{\partial\Phi_{i}}{\partial x_{i}}(x)=-\sum_{1\leq j\leq K, j\neq i}\frac{\partial\Phi_{j}}{\partial x_{i}}(x),
\end{align}
where $\overrightarrow{n}_{x}^{ij}\coloneqq \frac{x_{j}-x_{i}}{\left| x_{j}-x_{i}\right|}$, 
\begin{equation}\label{midplan}
M^{x}_{ij}\coloneqq\Big\{u\in\mathbb{R}^{d}\mid \langle u-\frac{x_{i}+x_{j}}{2}\mid x_{i}-x_{j}\rangle=0\Big\}
\end{equation} 
and $\lambda_{x}^{ij}(d\xi)$ denotes the Lebesgue measure on the affine hyperplane $M^{x}_{ij}$. 
\end{lemma}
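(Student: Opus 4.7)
The plan is to establish the off-diagonal derivative by a Reynolds-type transport argument, then deduce the diagonal formula from the mass-conservation identity $\sum_k\Phi_k(x)=\int_{[0,1]^d}\varphi\,d\omega$, and finally read continuous differentiability off the resulting expressions.

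\textbf{Geometric setup.} Fix $x=(x_1,\dots,x_K)\in D_K$ and indices $i\neq j$, and consider the perturbation replacing $x_j$ by $x_j+t\dot x_j$ while holding all other components fixed. Openness of $D_K$ makes this admissible for $|t|$ small. The cell $V_i(x)$ is the intersection of the half-spaces $\{\xi:|\xi-x_i|\leq|\xi-x_k|\}$, $k\neq i$, whose bounding hyperplanes are the $M_{ik}^x$ defined in (\ref{midplan}). Among these, only the face $F_{ij}(x)\coloneqq V_i(x)\cap V_j(x)\subset M_{ij}^x$ depends on $x_j$; the other faces $F_{ik}$, $k\notin\{i,j\}$, are frozen along the perturbation.

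\textbf{Normal velocity of the moving bisector.} I compute the (signed) outward normal velocity $v(\xi)$ of $M_{ij}^{x(t)}$ at $\xi\in M_{ij}^x$. Write $\dot x_j=\alpha\,\overrightarrow{n}_x^{ij}+\dot x_j^{\perp}$ with $\alpha=\langle\dot x_j\mid\overrightarrow{n}_x^{ij}\rangle$. The parallel component translates the midpoint $\tfrac{x_i+x_j}{2}$ by $\tfrac{\alpha}{2}\overrightarrow{n}_x^{ij}$ without changing the normal direction, contributing $\tfrac{\alpha}{2}$ to $v$. The transverse component rotates the normal at rate $\dot x_j^{\perp}/|x_j-x_i|$, which produces at $\xi\in M_{ij}^x$ a further normal displacement $\tfrac{1}{|x_j-x_i|}\langle\tfrac{x_i+x_j}{2}-\xi\mid\dot x_j\rangle$; this comes from linearising $\langle\xi-\tfrac{x_i+x_j(t)}{2}\mid x_j(t)-x_i\rangle=0$ in $t$, using that $\xi-\tfrac{x_i+x_j}{2}\perp\overrightarrow{n}_x^{ij}$. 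Summing,
\begin{equation*}
v(\xi)=\Big\langle\tfrac12\overrightarrow{n}_x^{ij}+\tfrac{1}{|x_j-x_i|}\big(\tfrac{x_i+x_j}{2}-\xi\big)\,\Big|\,\dot x_j\Big\rangle.
\end{equation*}

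\textbf{Reynolds transport and the off-diagonal formula.} Decompose
\begin{equation*}
\Phi_i(x(t))-\Phi_i(x)=\int_{V_i(x(t))\setminus V_i(x)}\varphi\,d\omega-\int_{V_i(x)\setminus V_i(x(t))}\varphi\,d\omega.
\end{equation*}
Up to a set of $\lambda_d$-measure $o(t)$, the symmetric difference $V_i(x(t))\triangle V_i(x)$ is the prism erected over $F_{ij}(x)$ in the direction $\overrightarrow{n}_x^{ij}$, with signed width $tv(\xi)$ at each $\xi$: when $v(\xi)>0$ the cell gains, when $v(\xi)<0$ it loses. By continuity of $\varphi$ (uniform on the compact tube) and Fubini,
\begin{equation*}
\Phi_i(x(t))-\Phi_i(x)=t\int_{F_{ij}(x)}\varphi(\xi)\,v(\xi)\,\lambda_x^{ij}(d\xi)+o(t).
\end{equation*}
Dividing by $t$, letting $t\to 0$ and identifying the linear functional in $\dot x_j$ gives the stated formula for $\frac{\partial\Phi_i}{\partial x_j}$.

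\textbf{Diagonal derivative and continuity.} Since the cells $V_1(x),\dots,V_K(x)$ tile $[0,1]^d$ up to a $\lambda_d$-null boundary, $\sum_{k=1}^K\Phi_k(x)=\int_{[0,1]^d}\varphi(\omega)d\omega$ is independent of $x$. Differentiating in $x_i$ and isolating the $k=i$ term yields $\frac{\partial\Phi_i}{\partial x_i}(x)=-\sum_{k\neq i}\frac{\partial\Phi_k}{\partial x_i}(x)$. For continuity, the face $F_{ij}(x)$ is cut out of $M_{ij}^x$ by finitely many half-spaces whose defining data depend smoothly on $x$; combined with continuity of $\varphi$ (bounded on $[0,1]^d$) and of the weights $\overrightarrow{n}_x^{ij}$, $\tfrac{x_i+x_j}{2}-\xi$, $\tfrac{1}{|x_j-x_i|}$, a dominated-convergence argument gives continuity of $\frac{\partial\Phi_i}{\partial x_j}$ on $D_K$; the diagonal derivative inherits continuity as a finite sum.

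\textbf{Main obstacle.} The delicate step is the tubular approximation in the Reynolds argument: $V_i(x(t))\triangle V_i(x)$ is not exactly a prism over $F_{ij}(x)$, because the relative-boundary edges $F_{ij}\cap F_{ik}$ (codimension-two pieces of $\partial V_i$) also deform with $t$. Their $\lambda_x^{ij}$-measure is zero so the first-order expansion is unaffected, but the excess contribution must be controlled---for instance by sandwiching the symmetric difference between tubes erected over slightly shrunk and enlarged faces $F_{ij}^{\pm\varepsilon}$, integrating, and sending $\varepsilon\to0$ after $t\to 0$.
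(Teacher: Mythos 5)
The paper does not prove this lemma: it is imported verbatim as Lemma 11 of \cite{fort1995convergence}, and only the algebraic simplification (\ref{simplifylemma}) is carried out in Appendix C. So there is no in-paper proof to compare against; your proposal has to stand on its own, and it does. The core of the argument is the normal-velocity computation, and yours is correct: writing the half-space constraint as $g(u,t)=|u-x_i|^2-|u-x_j(t)|^2\le 0$ one gets $\partial_t g=2\langle u-x_j,\dot x_j\rangle$ and $|\nabla_u g|=2|x_j-x_i|$, hence outward normal speed $v(\xi)=\langle x_j-\xi,\dot x_j\rangle/|x_j-x_i|$, which after substituting $\xi=\frac{x_i+x_j}{2}+w$ with $w\perp(x_j-x_i)$ is exactly $\big\langle\frac12\overrightarrow{n}_x^{ij}+\frac{1}{|x_j-x_i|}(\frac{x_i+x_j}{2}-\xi)\mid\dot x_j\big\rangle$; your split into a translation term $\alpha/2$ and a rotation term reproduces the same thing. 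The transport step, the mass-conservation derivation of the diagonal entry, and the dominated-convergence continuity argument (the face is a polyhedral convex subset of a continuously moving hyperplane, so its indicator converges $\lambda_x^{ij}$-a.e.) are all sound, and you correctly identify the one genuinely delicate point, namely that the symmetric difference $V_i(x(t))\triangle V_i(x)$ is only a prism over $F_{ij}(x)$ up to an $O(t^2)$ correction coming from the moving codimension-two edges; the sandwich between tubes over $F_{ij}^{\pm\varepsilon}$ is a standard and adequate way to close that. Two small points you gloss over: since $\varphi$ lives on $[0,1]^d$ the cells are implicitly $V_i(x)\cap[0,1]^d$, so the edges where $M_{ij}^x$ meets $\partial[0,1]^d$ need the same (harmless) $O(t^2)$ control, and for $x$ on the boundary of the cube the perturbation $x_j+t\dot x_j$ may require restricting to admissible directions or extending $\varphi$ continuously to a neighbourhood; neither affects the substance.
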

Note that one can simplify the result of Lemma \ref{lemmajean} as follows, 
\begin{align}\label{simplifylemma}
\forall i\neq j, \;\;\frac{\partial\Phi_{i}}{\partial x_{j}}(x)&=\int_{V_{i}(x)\cap V_{j}(x)}\varphi(\xi)\big\{\frac{1}{2}\frac{x_{j}-x_{i}}{\left| x_{j}-x_{i}\right|}+\frac{1}{\left| x_{j}-x_{i}\right|}(\frac{x_{i}+x_{j}}{2}-\xi)\big\}\lambda_{x}^{ij}(d\xi)\nonumber\\
&=\int_{V_{i}(x)\cap V_{j}(x)}\varphi(\xi)\frac{1}{\left| x_{j}-x_{i}\right|}\big\{\frac{x_{j}-x_{i}}{2}+\frac{x_{i}+x_{j}}{2}-\xi\big\}\lambda_{x}^{ij}(d\xi)\nonumber\\
&=\int_{V_{i}(x)\cap V_{j}(x)}\varphi(\xi)\frac{1}{\left| x_{j}-x_{i}\right|}(x_{j}-\xi)\lambda_{x}^{ij}(d\xi).
\end{align}

\begin{proof}[Proof of Proposition \ref{contihg}]

{\color{black}

$(i)$ Set $\varphi^{i, M}(\xi)=(x_{i}-\xi)f(\xi)\chi_{M}(\xi)$ with 
\[\chi_{M}(\xi)\coloneqq \begin{cases}
1 & \quad\left|\xi\right|\leq M\\
M+1-\left|\xi\right| & \quad M<\left|\xi\right|\leq M+1\\
0&  \quad\left|\xi\right|> M+1\\
\end{cases}.\]
Set $\Phi_{i}^{M}(x)=\int_{V_{i}(x)}\varphi^{i, M}(\xi)d\xi$ and $\Phi_{i}(x)=\int_{V_{i}(x)}(x_{i}-\xi)f(\xi)d\xi$ for $i =1, ..., K$. Then (\ref{gradg}) implies that $\frac{\partial \mathcal{D}_{K,\,\mu}}{\partial x_{i}}=2\Phi_{i}, \; i=1, ..., K$.

For $j=1, ..., K$ and $j\neq i$, it follows from (\ref{simplifylemma})  that 
\begin{equation}\label{Mderiveeseconde1}
\frac{\partial \Phi_{i}^{M}}{\partial x_{j}}(x)=\int_{V_{i}(x)\cap V_{j}(x)}(x_{i}-\xi)\otimes(x_{j}-\xi)\cdot  \frac{1}{\left| x_{j}-x_{i}\right|}f(\xi)\chi_{M}(\xi)\lambda_{x}^{ij}(d\xi), 
\end{equation}
{\color{black}
and for $i=1,..., K$,
\small
\begin{equation}\label{Mderiveeseconde2}
\frac{\partial\Phi_{i}^{M}}{\partial x_{i}}(x)=\Big[\big(\int_{V_{i}(\xi)}f(\xi)\chi_{M}(\xi)d\xi\big)\mathrm{I}_{d} - \sum_{\substack{i\neq j \\ 1\leq j\leq K}}\int_{V_{i}(x)\cap V_{j}(x)}(x_{i}-\xi)\otimes(x_{i}-\xi)\cdot  \frac{1}{\left| x_{j}-x_{i}\right|}f(\xi)\chi_{M}(\xi)\lambda_{x}^{ij}(d\xi)\Big],
\end{equation}
}
where in (\ref{Mderiveeseconde1}) and (\ref{Mderiveeseconde2}), $u\otimes v\coloneqq[u^{i}v^{j}]_{1\leq i, j \leq d}$ for  any two vectors $u=(u^{1}, ..., u^{d})$ and $v=(v^{1}, ... , v^{d})$ in $\mathbb{R}^{d}$. 

\smallskip

We prove now the differentiability of $\Phi_{i}$ in three steps. 

\noindent \textit{$\blacktriangleright$ Step 1}: We prove in this part that for every $x\in F_{K}$, 
\[h_{ij}(x)\coloneqq\int_{V_{i}(x)\cap V_{j}(x)}(x_{i}-\xi)\otimes(x_{j}-\xi)\cdot  \frac{1}{\left| x_{j}-x_{i}\right|}f(\xi)\lambda_{x}^{ij}(d\xi)<+\infty. \]
If $V_{i}(x)\cap V_{j}(x)=\varnothing$, it is obvious that  $h_{ij}(x)=0<+\infty$. Now we assume that $V_{i}(x)\cap V_{j}(x)\neq\varnothing$. Without loss of generality, we assume that $V_{1}(x)\cap V_{2}(x)=\varnothing$ and we prove in the following $h_{12}$ is well defined i.e. $(h_{12}(x)\in\mathbb{R}$.

Let \begin{equation}\label{defalpha}
\alpha(x, \xi)\coloneqq (x_{1}-\xi)\otimes(x_{2}-\xi)\cdot  \frac{1}{\left| x_{2}-x_{1}\right|}f(\xi).
\end{equation}
Then 
\[h_{12}(x)=\int_{V_{1}(x)\cap V_{2}(x)}\alpha(x, \xi)\lambda_{x}^{12}(d\xi).\]

Let $(e_{1}, ..., e_{d})$ denote the canonical basis of $\mathbb{R}^{d}$. Set $u^{x}=\frac{x_{1}-x_{2}}{\left| x_{1}-x_{2}\right|}$. As $x_{1}\neq x_{2}$, there exists at least one $i_{0}\in\{1, ..., d\}$ s.t. $\langle u^{x}\mid e_{i_{0}}\rangle\neq0$. Then $(u^{x}, e_{i}, 1\leq i\leq d, i\neq i_{0})$ forms a new basis of $\mathbb{R}^{d}$. Applying the Gram-Schmidt orthonormalization procedure, we derive the existence of a new orthonormal basis $(u_{1}^{x}, ..., u_{d}^{x})$ of $\mathbb{R}^{d}$ such that $u_{1}^{x}=u^{x}$. Moreover, the Gram-Schmidt orthonormalization procedure also implies that $u_{i}^{x}, 1\leq i \leq d$ is continuous in $x$. With respect to this new basis $(u_{1}^{x}, ..., u_{d}^{x})$, the hyperplane $M_{12}^{x}$ defined in (\ref{midplan}) can be written by 
\[M_{12}^{x}=\frac{x_{1}+x_{2}}{2}+ \mathrm{span}\big( u_{i}^{x}, \; i=2, ..., d\big),\]
where $\mathrm{span}(S)$ denotes the vector subspace of $\mathbb{R}^{d}$ spanned by $S$. Moreover, note that
\[
V_{1}(x)\cap V_{2}(x)=\big\{ \xi \in M_{12}^{x} \;\big|\; \min_{k=3, ..., K}\left|x_{k}-\xi\right|\geq \left|x_{1}-\xi\right|=\left|x_{2}-\xi\right|\big\}.\]
Then, for every fixed $\xi \notin \partial \big(V_{1}(x)\cap V_{2}(x)\big)$, the function $x\mapsto \mathbbm{1}_{V_{1}(x)\cap V_{2}(x)}(\xi)$ is continuous in $x\in F_{K}$ and 
\begin{equation}\label{convexpolyhedral}
\lambda_{x}^{12}\Big(\partial \big(V_{1}(x)\cap V_{2}(x)\big)\Big)=0
\end{equation}
since $V_{1}(x)\cap V_{2}(x)$ is a polyhedral convex set in $M_{12}^{x}$.

Now by a change of variable $\xi=\frac{x_{1}+x_{2}}{2}+\sum_{i=2}^{d}r_{i}u_{i}^{x}$, 
\begin{equation}\label{changeg}
h_{12}(x)=\int_{\mathbb{R}^{d-1}}\mathbbm{1}_{V_{12}(x)}\big((r_{2}, ..., r_{d})\big)\alpha\Big(x, \frac{x_{1}+x_{2}}{2}+\sum_{i=2}^{d}r_{i}u_{i}^{x}\Big)dr_{2}...dr_{d},
\end{equation}
where
\begin{equation}\label{defv12}
V_{12}(x)\coloneqq \Big\{(r_{2}, ..., r_{d})\in\mathbb{R}^{d-1}\;\Big|\; \min_{3\leq k\leq K}\Big| x_{k}-\frac{x_{1}+x_{2}}{2}-\sum_{i=2}^{d}r_{i}u_{i}^{x}\Big|\geq \Big|\frac{x_{1}-x_{2}}{2}-\sum_{i=2}^{d}r_{i}u_{i}^{x}\Big|\Big\}.
\end{equation}
Let $\partial V_{12}(x)$ be the boundary of $V_{12}(x)$ given by
\begin{equation}\label{defparv12}
\partial V_{12}(x)\coloneqq \Big\{(r_{2}, ..., r_{d})\in\mathbb{R}^{d-1}\;\Big|\; \min_{3\leq k\leq K}\Big| x_{k}-\frac{x_{1}+x_{2}}{2}-\sum_{i=2}^{d}r_{i}u_{i}^{x}\Big|= \Big|\frac{x_{1}-x_{2}}{2}-\sum_{i=2}^{d}r_{i}u_{i}^{x}\Big|\Big\}.
\end{equation}
Then (\ref{convexpolyhedral}) implies that $\lambda_{\mathbb{R}^{d-1}}\big(\partial V_{12}(x)\big)=0$ where $\lambda_{\mathbb{R}^{d-1}}$ denotes the Lebesgue measure of the subspace $\mathrm{span}\big(u_{i}^{x}, i=2, ..., d\big)$.

It is obvious that for any $a=(a_{1}, ..., a_{d}), b=(b_{1}, ..., b_{d})\in\mathbb{R}^{d}$, we have $\left|a_{i}b_{j}\right|\leq \left|a\right|\left|b\right|, 1\leq i,j\leq d$. Thus the absolute value of every term in the matrix
\begin{align}\label{matrixalpha}
\alpha&(x, \frac{x_{1}+x_{2}}{2}+\sum_{i=2}^{d}r_{i}u_{i}^{x})\nonumber\\
&=\frac{\big(\frac{x_{1}-x_{2}}{2}-\sum_{i=2}^{d}r_{i}u_{i}^{x}\big)\otimes \big(\frac{x_{2}-x_{1}}{2}-\sum_{i=2}^{d}r_{i}u_{i}^{x}\big)}{\left| x_{2}-x_{1}\right|}f\Big(\frac{x_{1}+x_{2}}{2}+\sum_{i=2}^{d}r_{i}u_{i}^{x}\Big)
\end{align}
can be upper-bounded by 
\begin{align}\label{majo1}
&\frac{\big|\frac{x_{1}-x_{2}}{2}-\sum_{i=2}^{d}r_{i}u_{i}^{x}\big| \big|\frac{x_{2}-x_{1}}{2}-\sum_{i=2}^{d}r_{i}u_{i}^{x}\big|}{\left| x_{2}-x_{1}\right|}f\Big(\frac{x_{1}+x_{2}}{2}+\sum_{i=2}^{d}r_{i}u_{i}^{x}\Big)\nonumber\\
&\leq\frac{\Big(\big|\frac{x_{1}-x_{2}}{2}\big|+\big|\sum_{i=2}^{d}r_{i}u_{i}^{x}\big| \Big)^{2}}{\left| x_{2}-x_{1}\right|}f\Big(\frac{x_{1}+x_{2}}{2}+\sum_{i=2}^{d}r_{i}u_{i}^{x}\Big)\nonumber\\
&\leq C_{x}(1+\sum_{i=2}^{d}r_{i}^{2})f\Big(\frac{x_{1}+x_{2}}{2}+\sum_{i=2}^{d}r_{i}u_{i}^{x}\Big)
\end{align}
where $C_{x}>0$ is a constant depending only on $x$. 

The distribution $\mu$ is assumed to be $1$-radially controlled i.e. there 
 exist a constant $A>0$ and a continuous and decreasing function $g: \mathbb{R}_{+}\rightarrow \mathbb{R}_{+}$ such that 
\begin{equation}\label{dthcon}
\forall \xi\in\mathbb{R}^{d}, \left|\xi\right|\geq A,\;\;\;\;\;\;\;\;\; f(\xi)\leq g(\left|\xi\right|) \;\text{and}\;\int_{\mathbb{R}_{+}}x^{d}g(x)dx<+\infty.
\end{equation}
Now let $K\coloneqq \frac{1}{2}\left|x_{1}+x_{2}\right|\vee A$ and let $r\coloneqq \sum_{i=2}^{d}r_{i}u_{i}^{x}$. As $g$ is a non-increasing function, it follows that 
\begin{align}
&C_{x}(1+\sum_{i=2}^{d}r_{i}^{2})f\Big(\frac{x_{1}+x_{2}}{2}+\sum_{i=2}^{d}r_{i}u_{i}^{x}\Big)\nonumber\\
&\leq C_{x}(1+\left|r\right|^{2})\sup_{\xi\in B(\mathbf{0}, 3K)}f(\xi)\mathbbm{1}_{\{\left|r\right|\leq 2K\}}+C_{x}(1+\left|r\right|^{2})g\Big(\Big|\frac{x_{1}^{(n)}+x_{2}^{(n)}}{2}+\sum_{i=2}^{d}r_{i}u_{i}^{x}\Big|\Big)\mathbbm{1}_{\{\left|r\right|\geq 2K\}}. \nonumber\\
&\leq  C_{x}(1+\left|r\right|^{2})\sup_{\xi\in B(\mathbf{0}, 3K)}f(\xi)\mathbbm{1}_{\{\left|r\right|\leq 2K\}}+C_{x}(1+\left|r\right|^{2})g\big(\left|r\right|-K\big)\mathbbm{1}_{\{\left|r\right|\geq 2K\}}. 
\end{align}

Switching to polar coordinates, one obtains by letting $s=\left|r\right|$
\begin{align}
&\int_{\mathbb{R}^{d-1}}C_{x}\left|r\right|^{2}g\big(\left| r \right| -K\big)\mathbbm{1}_{\{\left|r\right|\geq 2K\}}dr_{2}...dr_{d}\nonumber\\
&\leq C_{x, d}\int_{\mathbb{R}_{+}}\; s^{2}g(s-K)\mathbbm{1}_{\{s\geq 2K\}}s^{d-2}ds\leq C_{x,d}\int_{K}^{\infty}(s+K)^{d}g(s)ds\nonumber\\
&\leq 2^{d}C_{x, d}\int_{K}^{\infty} (K^{d}+s^{d})g(s)ds<+\infty,\nonumber
\end{align}
where the last inequality follows from (\ref{dthcon}). Thus one obtains 
\[\int_{\mathbb{R}^{d-1}}\Big[ C_{x}(1+\left|r\right|^{2})\sup_{\xi\in B(\mathbf{0}, 3K)}f(\xi)\mathbbm{1}_{\{\left|r\right|\leq 2K\}}+C_{x}(1+\left|r\right|^{2})g\big(\left|r\right|-K\big)\mathbbm{1}_{\{\left|r\right|\geq 2K\}}\Big] dr_{2}...dr_{d}<+\infty.\]
Hence $h_{12}$ is well-defined since
\begin{equation}\label{bounded}
\int_{V_{1}(x)\cap V_{2}(x)}\left|\alpha(x, \xi)\right|\lambda_{x}^{12}(d\xi)<+\infty.
\end{equation}

\smallskip
\noindent \textit{$\blacktriangleright$ Step 2}: Now we prove that for any $x\in F_{K}$, 
\begin{equation}\label{valmatrix}
\sup_{y\in B(x,\, \varepsilon_{x})}\left|\frac{\partial \Phi_{i}^{M}}{\partial x_{j}}(y)- h_{ij}(y)\right| \xrightarrow{M\rightarrow+\infty}0, 
\end{equation}
where $\varepsilon_{x}=\frac{1}{3}\min_{1\leq i< j\leq K}\left|x_{i}-x_{j}\right|$ and (\ref{valmatrix}) means every term in the matrix converges to 0. 

First, for every fixed $y\in B(x, \varepsilon_{x})$, the absolute value of every term in the following matrix 
\begin{align}
&\frac{\partial \Phi_{i}^{M}}{\partial x_{j}}(y)-h_{ij}(y)=\int_{V_{i}(y)\cap V_{j}(y)}\frac{(y_{i}-\xi)\otimes(y_{j}-\xi)}{\left|y_{j}-y_{i}\right|}f(\xi)\big(1-\chi_{M}(\xi)\big)\lambda_{y}^{ij}(d\xi)\nonumber
\end{align}
can be upper bounded by 
\begin{equation}
f_{M}(y)\coloneqq\int_{V_{i}(y)\cap V_{j}(y)\cap \big(\mathbb{R}^{d}\setminus B(0, M+1)\big)}\frac{|y_{i}-\xi||y_{j}-\xi|}{\left|y_{j}-y_{i}\right|}f(\xi)\lambda_{y}^{ij}(d\xi).
\end{equation}
Moreover, the inequality (\ref{bounded}) implies that $f_{M}(y)$ converges to 0 for every $y\in B(x, \varepsilon_{x})$ as $M\rightarrow+\infty$. As $(f_{M})_{M}$ is  a monotonically decreasing sequence, one can obtain 
\[\sup_{y\in B(x, \varepsilon)}\big|f_{M}(y)\big|\rightarrow 0\]
owing to Dini's theorem, which in turn implies the convergence in (\ref{valmatrix}).

\smallskip
\noindent \textit{$\blacktriangleright$ Step 3}: It is obvious that $\Phi_{i}^{M}(x)$ converges to $\Phi_{i}(x)$ for every $x\in \mathbb{R}^{d}$ as $M\rightarrow+\infty$ since $\mu\in\mathcal{P}_{2}(\mathbb{R}^{d})$. Hence $\frac{\partial\Phi_{1}}{\partial x_{2}}(x)=h_{12}(x)$.  Then one can directly obtain (\ref{deriveeseconde1}) since $\frac{\partial \mathcal{D}_{K, \mu}}{\partial x_{j} x_{i}}=2 \frac{\partial \Phi_{i}}{\partial x_{j}}=2h_{ij}$ by applying (\ref{gradg}). The proof for (\ref{deriveeseconde2}) is similar.

\smallskip

\noindent$(ii)$ We will only prove the continuity of $\frac{\partial^{2}\mathcal{D}_{K, \mu}}{\partial x_{1}\partial x_{2}}$ and $\frac{\partial^{2}\mathcal{D}_{K, \mu}}{\partial x_{1}^{2}}$ at a point $x\in F_{K}$. The proof for $\frac{\partial^{2}\mathcal{D}_{K, \mu}}{\partial x_{i}\partial x_{j}}$ for  others $i, j\in\{1, ..., K\}$ is similar. We take the same definition of $\alpha(x, \xi)$ in (\ref{defalpha}), 
then 
\[\frac{\partial^{2}\mathcal{D}_{K, \mu}}{\partial x_{1}\partial x_{2}}(x)=2\int_{V_{1}(x)\cap V_{2}(x)}\alpha(x, \xi)\lambda_{x}^{12}(d\xi)\]
and by the same change of variable (\ref{changeg}) as in $(i)$, we have 
\[\frac{\partial^{2}\mathcal{D}_{K, \mu}}{\partial x_{1}\partial x_{2}}(x)=2\int_{\mathbb{R}^{d-1}}\mathbbm{1}_{V_{12}(x)}\big((r_{2}, ..., r_{d})\big)\alpha\Big(x, \frac{x_{1}+x_{2}}{2}+\sum_{i=2}^{d}r_{i}u_{i}^{x}\Big)dr_{2}...dr_{d}\]
with the same definition of $V_{12}(x)$ as in (\ref{defv12}). }

{\color{black}

Let us now consider a sequence $x^{(n)}=(x_{1}^{(n)}, ..., x_{K}^{(n)})\in(\mathbb{R}^{d})^{K}$ converging to a point $x=(x_{1}, ..., x_{K})\in F_{K}$ satisfying that for every $n\in\mathbb{N}^{*}$,
\begin{equation}\label{deltax}
\big|x^{(n)}-x\big|\leq \delta_{x}\coloneqq\frac{1}{3}\min_{1\leq i,j\leq K, i\neq j} \left|x_{i}-x_{j}\right|,
\end{equation} 
so that $x^{(n)}\in F_{K}$ for every $n\in\mathbb{N}^{*}$. For a fixed $(r_{2}, ..., r_{d})\in \mathbb{R}^{d-1}$, the continuity of $x\mapsto \alpha(x, \frac{x_{1}+x_{2}}{2}+\sum_{i=2}^{d}r_{i}u_{i}^{x})$ in $F_{K}$ can be obtained by the continuity of $(x, \xi)\mapsto \alpha(x, \xi)$ and the continuity of Gram-Schmidt orthonormalization procedure. 

By the same reasoning as in (\ref{majo1}), the absolute value of every term in the matrix \[\alpha\Big(x^{(n)}, \frac{x_{1}^{(n)}+x_{2}^{(n)}}{2}+\sum_{i=2}^{d}r_{i}^{(n)}u_{i}^{x^{(n)}}\Big)\]
can be upper bounded by 
\[\frac{\Big(\big|\frac{x_{1}^{(n)}-x_{2}^{(n)}}{2}\big|+\big|\sum_{i=2}^{d}r_{i}u_{i}^{x^{(n)}}\big| \Big)^{2}}{\left| x_{2}^{(n)}-x_{1}^{(n)}\right|}f\Big(\frac{x_{1}^{(n)}+x_{2}^{(n)}}{2}+\sum_{i=2}^{d}r_{i}^{(n)}u_{i}^{x^{(n)}}\Big),\]
where there exists a constant $C_{x}$ depending only on $x$ such that 
\[\frac{\Big(\big|\frac{x_{1}^{(n)}-x_{2}^{(n)}}{2}\big|+\big|\sum_{i=2}^{d}r_{i}u_{i}^{x^{(n)}}\big| \Big)^{2}}{\left| x_{2}^{(n)}-x_{1}^{(n)}\right|}\leq C_{x}(1+\sum_{i=2}^{d}r_{i}^{2})\] 
since by (\ref{deltax}), one can get 
\[\forall n\in\mathbb{N}^{*}, \forall i,j\in\{1, ..., K\} \text{ with } i\neq j, \quad\delta_{x}\leq \left|x_{i}^{(n)}-x_{j}^{(n)}\right|\leq \max_{1\leq i,j\leq K}\left|x_{i}-x_{j}\right|+2\delta_{x}.\]

Moreover, if we take 
$K\coloneqq \frac{1}{2}\sup_{n}\left|x_{1}^{(n)}+x_{2}^{(n)}\right|\vee A$ and take $r_{n}\coloneqq \sum_{i=2}^{d}r_{i}u_{i}^{x^{(n)}}$, then 


\begin{align}
&C_{x}(1+\sum_{i=2}^{d}r_{i}^{2})f\Big(\frac{x_{1}^{(n)}+x_{2}^{(n)}}{2}+\sum_{i=2}^{d}r_{i}u_{i}^{x^{(n)}}\Big)\nonumber\\
&\leq C_{x}(1+\left|r\right|^{2})\sup_{\xi\in B(\mathbf{0}, 3K)}f(\xi)\mathbbm{1}_{\{\left|r\right|\leq 2K\}}+C_{x}(1+\left|r\right|^{2})g\Big(\Big|\frac{x_{1}^{(n)}+x_{2}^{(n)}}{2}+\sum_{i=2}^{d}r_{i}u_{i}^{x^{(n)}}\Big|\Big)\mathbbm{1}_{\{\left|r\right|\geq 2K\}}. \nonumber\\
&\leq  C_{x}(1+\left|r\right|^{2})\sup_{\xi\in B(\mathbf{0}, 3K)}f(\xi)\mathbbm{1}_{\{\left|r\right|\leq 2K\}}+C_{x}(1+\left|r\right|^{2})g\big(\left|r\right|-K\big)\mathbbm{1}_{\{\left|r\right|\geq 2K\}}.
\end{align}
By the same reasoning as in $(i)$-Step 1, we have
\[\int_{\mathbb{R}^{d-1}}\Big[ C_{x}(1+\left|r\right|^{2})\sup_{\xi\in B(\mathbf{0}, 3K)}f(\xi)\mathbbm{1}_{\{\left|r\right|\leq 2K\}}+C_{x}(1+\left|r\right|^{2})g\big(\left|r\right|-K\big)\mathbbm{1}_{\{\left|r\right|\geq 2K\}}\Big] dr_{2}...dr_{d}<+\infty,\]
which implies $\frac{\partial^{2}\mathcal{D}_{K, \mu}}{\partial x_{1}\partial x_{2}}(x^{(n)})\rightarrow \frac{\partial^{2}\mathcal{D}_{K, \mu}}{\partial x_{1}\partial x_{2}}(x)$ as $n\rightarrow+\infty$ by applying  Lebesgue's dominated convergence theorem. Thus $\frac{\partial^{2}\mathcal{D}_{K, \mu}}{\partial x_{1}\partial x_{2}}$ is continuous at $x\in F_{K}$.}

It remains to prove the continuity of 
$x\mapsto \mu\big(V_{1}(x)\big)=\int_{\mathbb{R}^{d}}\mathbbm{1}_{V_{1}(x)}(\xi)f(\xi)\lambda_{d}(d\xi)$
to obtain the continuity of $\frac{\partial^{2}\mathcal{D}_{K, \mu}}{\partial x_{1}^{2}}$ defined in (\ref{deriveeseconde2}).  
Remark that 
\[V_{1}(x)=\Big\{\xi\in\mathbb{R}^{d}\;\big|\; \left|\xi-x_{1}\right|\leq \min_{1\leq j\leq K}\left|\xi-x_{j}\right|\Big\},\]
and by \cite{graf2000foundations}[Proposition 1.3], 
\[\partial V_{1}(x)=\Big\{\xi\in\mathbb{R}^{d}\;\big|\;\left|\xi-x_{1}\right|= \min_{1\leq j\leq K}\left|\xi-x_{j}\right|\Big\}.\]
Then for any $\xi\notin\partial V_{1}(x)$, the function $x\mapsto\mathbbm{1}_{V_{1}(x)}(\xi)$ is continuous. As the norm $\left|\cdot\right|$ is the Euclidean norm, then $\lambda_{d}(\partial V_{i}(x))=0$ (see \cite{graf2000foundations}[Proposition 1.3 and Theorem 1.5]). For any $x\in F_{K}$ and a sequence $x^{(n)}$ converging to $x$, we have
$\mathbbm{1}_{V_{1}(x^{(n)})}(\xi)f(\xi)\leq f(\xi)\in L^{1}(\lambda_{d})$. Thus the continuity of $x\mapsto \mu\big(V_{1}(x)\big)=\int_{\mathbb{R}^{d}}\mathbbm{1}_{V_{1}(x)}(\xi)f(\xi)\lambda_{d}(d\xi)$ is a direct application of Lebesgue's dominated convergence theorem. 

\end{proof}

\subsection{Appendix D:  Proof of Proposition \ref{podedim1}}

\begin{proof}
(i) We will only deal with the uniform distribution $U([0,1])$.  The proof is similar for other uniform distributions.

In \cite{graf2000foundations}[see Example 4.17 and 5.5] and \cite{MR1664202}, the authors show that $\Gamma^{*}=\{\frac{2i-1}{2K}: i-1, ..., K\}$ is the unique optimal quantizers of $U([0,1])$. Let $x^{*}=(\frac{1}{2K}, ..., \frac{2i-1}{2K}, ..., \frac{2K-1}{2K})$, then one can compute explicitly $H_{\mathcal{D}}(x^{*})$:

\begin{equation}\label{hessiendim1}
H_{\mathcal{D}}(x^{*})=\left[\begin{array}{ccccccc}
\frac{3}{2K} & -\frac{1}{2K}& & & & & \Large0 \normalsize\\
 & \ddots & \ddots & \ddots\\
 &  & -\frac{1}{2K} & \frac{1}{K} & -\frac{1}{2K}\\
 &  &  & \ddots & \ddots & \ddots\\
 \Large 0\normalsize&  &  &  &  & -\frac{1}{2K} & \frac{3}{2K}
\end{array}\right],
\end{equation}

The matrix $H_{\mathcal{D}}(x^{*})$ is tridiagonal. If we denote by $f_{k}(x^{*})$ its $k$-th leading principal minor and we define $f_{0}(x^{*})=1$, then 
\begin{equation}\label{recuminor}
f_{k}(x^{*})=\frac{1}{K}f_{k-1}(x^{*})-\frac{1}{4K^{2}}f_{k-2}(x^{*}) \;\;\;\mathrm{for}\;k=2,..., K-1,
\end{equation} 
and $f_{1}(x^{*})=\frac{3}{2K}$ and $f_{K}(x^{*})=\left|H_{\mathcal{D}}(x^{*})\right|=\frac{3}{K}f_{K-1}(x^{*})-\frac{1}{4K^{2}}f_{K-2}(x^{*})$ (see \cite{el2003note}). One can solve from the three-term recurrence relation that 
\begin{align}\label{minor}
&f_{k}(x^{*})=\frac{2k+1}{2^{k}K^{k}}, \;\;\text{for}\;k=1,... , K-1 \\
\mathrm{And}\;\;\;& f_{K}(x^{*})=\frac{2{K}+1}{2^{K}K^{K}}+\frac{1}{2K}f_{K-1}. 
\end{align}
In fact, (\ref{minor}) is true for $k=1$. Suppose (\ref{minor}) holds for $k\leq K-2$, then owing to (\ref{recuminor})
\[f_{k+1}(x^{*})=\frac{1}{K}\cdot\frac{2k+1}{2^{k}K^{k}}-\frac{1}{4{K}^{2}}\cdot\frac{2(k-1)+1}{2^{k-1}K^{k-1}}=\frac{2(k+1)+1}{2^{k+1}K^{k+1}}.\]

Then it is obvious that $f_{k}(x^{*})>0$ for $k=1, ..., K$. Thus, $H_{\mathcal{D}}(x^{*})$ is positive definite.

\smallskip
\noindent(ii) We define for $i=2, ..., K$, $\widetilde{x}_{i}^{*}=\frac{x_{i-1}^{*}+x_{i}^{*}}{2}$, then the Voronoi region $V_{i}(x^{*})=[\widetilde{x}_{i}^{*},\; \widetilde{x}_{i+1}^{*}]$ for $i=2, ..., K-1$, $V_{1}(x^{*})=(-\infty, \widetilde{x}_{2}^{*}]$ and $V_{K}(x^{*})=[\widetilde{x}_{K}^{*}, +\infty)$. 

For $2\leq i \leq K-1$, 
\small
\begin{align}\label{lix}
L_{i}(x^{*})&=A_{i}-2B_{i-1, i}-2B_{i, i+1}\nonumber\\
&=2\mu\big(V_{i}(x^{*})\big)-(x^{*}_{i}-x_{i-1}^{*})f(\frac{x_{i-1}^{*}+x_{i}^{*}}{2})-(x^{*}_{i+1}-x_{i}^{*})f(\frac{x_{i}^{*}+x_{i+1}^{*}}{2})\nonumber\\
&=2\mu\big(V_{i}(x^{*})\big)-2(x_{i}^{*}-\widetilde{x}_{i}^{*})f(\widetilde{x}_{i}^{*})-{2}(\widetilde{x}_{i+1}^{*}-x_{i}^{*})f(\widetilde{x}_{i+1}^{*})\nonumber\\
&=\frac{2}{\mu\big(V_{i}(x^{*})\big)}\Big\{ \mu\big(V_{i}(x^{*})\big)^{2}-[x_{i}^{*}\mu\big(V_{i}(x^{*})\big)\nonumber\\
&\;\;\;\;\:\:-\widetilde{x}_{i}^{*}\mu\big(V_{i}(x^{*})\big)]f(\widetilde{x}_{i}^{*})- [\widetilde{x}_{i+1}^{*}\mu\big(V_{i}(x^{*})\big)-x_{i}^{*}\mu\big(V_{i}(x^{*})\big)]f(\widetilde{x}_{i+1}^{*})\Big\}\nonumber\\
&=\frac{2}{\mu\big(V_{i}(x^{*})\big)}\Big\{ \mu\big(V_{i}(x^{*})\big)^{2}-[\int_{V_{i}(x^{*})}\xi f(\xi)d\xi-\widetilde{x}_{i}^{*}\int_{V_{i}(x^{*})}f(\xi)d\xi]f(\widetilde{x}_{i}^{*})\nonumber\\
&\:\:\:\:\:\:\:- [\widetilde{x}_{i+1}^{*}\int_{V_{i}(x^{*})}f(\xi)d\xi-\int_{V_{i}(x^{*})}\xi f(\xi)d\xi]f(\widetilde{x}_{i+1}^{*})\Big\}\:\:\:\:\text{ \big(owing to (\ref{stationarych3})\big)}\nonumber\\
&=\frac{2}{\mu\big(V_{i}(x^{*})\big)}\Big\{\mu\big(V_{i}(x^{*})\big)^{2}-f(\widetilde{x}_{i}^{*})\int_{V_{i}(x^{*})}(\xi-\widetilde{x}_{i}^{*})f(\xi)d\xi+f(\widetilde{x}_{i+1}^{*})\int_{V_{i}(x^{*})}(\xi-\widetilde{x}_{i+1}^{*})f(\xi)d\xi\Big\}.
\end{align}

\normalsize
In order to study the positivity of $L_{i}(x^{*})$, we define a function $\varphi_{i}(u)$ for any $ i\in\{1, ...,K\}$ and for any $u=(u_{1}, ..., u_{K+1})\in F_{K+1}^{+}$ by
\begin{equation}\label{phi}
\varphi_{i}(u)\coloneqq\big[\int_{u_{i}}^{u_{i+1}}f(\xi)d\xi\big]^{2}-f(u_{i})\int_{u_{i}}^{u_{i+1}}(\xi-u_{i})f(\xi)d\xi+f(u_{i+1})\int_{u_{i}}^{u_{i+1}}(\xi-u_{i+1})f(\xi)d\xi, \;\;
\end{equation}
\phantom\qedhere
\end{proof}

\begin{lemma}\label{lemphi}
If $f$ is positive and differentiable and if $\log f$ is strictly concave, then for all $u=(u_{1}, ..., u_{K+1})\in F_{K+1}^{+}$, we have the following results for $\varphi_{i}(u)$ defined in (\ref{phi}),
\begin{enumerate}[(a)]
\item for every $i=1, ..., K$, $\varphi_{i}(u)>0$;
\item $\frac{\partial\varphi_{1}}{\partial u_{1}}(u)<0$;
\item $\frac{\partial\varphi_{K}}{\partial u_{K+1}}(u)>0$.
\end{enumerate}
\end{lemma}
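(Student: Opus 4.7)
\emph{Plan.} My plan is to reduce all three claims to a single symmetric double-integral identity that makes the role of strict log-concavity fully transparent. Writing $a=u_i$, $b=u_{i+1}$, $M=\int_a^b f$ and $h=\log f$, I would start from the definition
\[
\varphi_i(u) = M^2 - f(u_i)\!\int_a^b (\xi-a)f(\xi)\,d\xi - f(u_{i+1})\!\int_a^b (b-\xi)f(\xi)\,d\xi
\]
(obtained by separating the last term $+f(u_{i+1})\int(\xi-u_{i+1})f$ from the statement) and integrate each of the two moment integrals by parts against the cumulative $F(x)=\int_a^x f$; one further integration by parts on the resulting $\int_a^b x f'(x)\,dx$ causes the $M^2$-scale terms to cancel, leaving
\[
\varphi_i(u) = \int_a^b f'(x)\!\int_a^b (\xi-x)f(\xi)\,d\xi\,dx.
\]
Symmetrizing this double integral in $(x,\xi)$ and using $f'=f\cdot h'$ then yields the key identity
\[
\varphi_i(u) = \tfrac{1}{2}\!\int_a^b\!\int_a^b (\xi-x)\,f(x)f(\xi)\bigl[h'(x)-h'(\xi)\bigr]\,dx\,d\xi.
\]

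Once this identity is established, the three assertions fall out of a single sign observation. Strict concavity of $h$ means $h'$ is strictly decreasing on $\{f>0\}$, so $(\xi-x)\bigl[h'(x)-h'(\xi)\bigr]\geq 0$ throughout the square and is strictly positive off the diagonal $\xi=x$; combined with $f>0$ this gives $\varphi_i(u)>0$, which is $(a)$. For $(b)$ and $(c)$ I would exploit the $(x,\xi)$-symmetry of the kernel
\[
K(x,\xi):=(\xi-x)\,f(x)f(\xi)\bigl[h'(x)-h'(\xi)\bigr].
\]
Applying Leibniz' rule to $\varphi_i(u)=\tfrac{1}{2}\iint_{[u_i,u_{i+1}]^2} K$ and collapsing the two equal boundary contributions yields
\[
\frac{\partial\varphi_1}{\partial u_1}(u) = -\!\int_{u_1}^{u_2} K(u_1,\xi)\,d\xi, \qquad \frac{\partial\varphi_K}{\partial u_{K+1}}(u) = \int_{u_K}^{u_{K+1}} K(u_{K+1},\xi)\,d\xi.
\]
For $\xi\in(u_1,u_2)$ both factors $(\xi-u_1)$ and $h'(u_1)-h'(\xi)$ are positive, so $K(u_1,\xi)>0$ and $(b)$ follows; symmetrically, for $\xi\in(u_K,u_{K+1})$ both factors $(\xi-u_{K+1})$ and $h'(u_{K+1})-h'(\xi)$ are negative, hence $K(u_{K+1},\xi)>0$, giving $(c)$.

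\emph{The hard part} is the first step, namely the derivation of $\varphi_i(u)=\int_a^b f'(x)\!\int_a^b (\xi-x)f(\xi)\,d\xi\,dx$. This is purely mechanical but requires careful bookkeeping, since three $M^2$-scale quantities must cancel exactly. Once it is in place, the antisymmetric rewriting $f'(x)f(\xi)-f'(\xi)f(x)=f(x)f(\xi)\bigl[h'(x)-h'(\xi)\bigr]$ reduces all of $(a)$, $(b)$, $(c)$ to the single observation that strict log-concavity is equivalent to strict monotonicity of $h'$.
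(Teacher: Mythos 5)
Your argument is correct, and it takes a genuinely different route from the paper. The paper never produces a closed form for $\varphi_i$: it computes the first partials $\frac{\partial\varphi_i}{\partial u_i}$, $\frac{\partial\varphi_i}{\partial u_{i+1}}$ and the mixed second partial $\frac{\partial^2\varphi_i}{\partial u_{i+1}\partial u_i}$, observes that strict log-concavity forces $f'(u_{i+1})f(u_i)-f(u_{i+1})f'(u_i)<0$ and hence the mixed partial to be negative, and then obtains (a), (b), (c) by two successive applications of the mean value theorem starting from the fact that $\varphi_i$ and its relevant first partials vanish on the diagonal $u_i=u_{i+1}$. Your representation
\[
\varphi_i(u)=\tfrac12\int_a^b\!\!\int_a^b(\xi-x)\,f(x)f(\xi)\bigl[h'(x)-h'(\xi)\bigr]\,dx\,d\xi,\qquad h=\log f,
\]
checks out (a single integration by parts of $\int_a^b f'(x)G(x)\,dx$ with $G(x)=\int_a^b(\xi-x)f(\xi)\,d\xi$, $G'=-M$, already produces the cancellation of the $M^2$ terms), and it buys more than the paper's proof: an explicit nonnegative-kernel formula from which all three signs, and indeed a quantitative lower bound on $\varphi_i$, can be read off at a glance; the monotonicity of $h'$ is visibly the exact hypothesis being used. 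The paper's route is more elementary in that it needs no interchange of integrals. The only point you should make explicit is the regularity needed for the symmetrization (Fubini) and for Leibniz' rule: $f$ is only assumed differentiable, but since $h'=f'/f$ is strictly decreasing it is locally Riemann integrable and $f$ is continuous, so $f'=f\,h'$ is locally Riemann integrable and the kernel $K(x,\xi)$ is jointly integrable on $[a,b]^2$, which legitimizes both steps; this is no stronger than what the paper itself implicitly assumes when it differentiates $\varphi_i$ twice.
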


\begin{proof}[Proof of lemma \ref{lemphi}]
For a fixed $i\in\{1, ..., K\}$, the partial derivatives of $\varphi_{i}$ are 
\begin{align}
&\frac{\partial\varphi_{i}}{\partial u_{i}}(u)=-2\big[\int_{u_{i}}^{u_{i+1}}f(\xi)d\xi\big]f(u_{i})- f'(u_{i})\int_{u_{i}}^{u_{i+1}}(\xi-u_{i})f(\xi)d\xi+f(u_{i})f(u_{i+1})(u_{i+1}-u_{i})\nonumber\\
&\frac{\partial\varphi_{i}\;\;\;}{\partial u_{i+1}}(u)=2\big[\int_{u_{i}}^{u_{i+1}}f(\xi)d\xi\big]f(u_{i+1})+f'(u_{i+1})\int_{u_{i}}^{u_{i+1}}(\xi-u_{i+1})f(\xi)d\xi\nonumber\\&\hspace{3cm}-f(u_{i})f(u_{i+1})(u_{i+1}-u_{i})\nonumber\\
&\frac{\partial\varphi_{i}}{\partial u_{l}}(u)=0, \:\:\:\text{for all}\;l\neq i \;\text{and} \;l\neq i+1.
\end{align}
The second derivatives of $\varphi_{i}$ are 
\begin{align}
&\frac{\partial^{2}\varphi_{i}}{\partial u_{i+1}\partial u_{i}}(u)=\frac{\partial^{2}\varphi_{i}}{\partial u_{i}\partial u_{i+1}}(u)
=-f(u_{i+1})f(u_{i})+(u_{i+1}-u_{i})\big(f(u_{i})f'(u_{i+1})-f'(u_{i})f(u_{i+1})\big)\nonumber\\
&\frac{\partial^{2}\varphi_{i}}{\partial u_{l}\partial u_{i}}(u)=\frac{\partial^{2}\varphi_{i}}{\partial u_{i}\partial u_{l}}(u)=0\:\:\:\:\text{for all}\;l\neq i\;\text{and}\;l\neq i+1.
\end{align}

If $\log f$ is strictly concave, then $\displaystyle(\log f)'=\frac{f'}{f}$ is strictly decreasing. For $u\in F_{K+1}^{+}$, we have $u_{i+1}>u_{i}$, then 
\[\frac{f'(u_{i+1})}{f(u_{i+1})} - \frac{f'(u_{i})}{f(u_{i})}= \frac{\:f'(u_{i+1})f(u_{i})-f(u_{i+1})f'(u_{i})\:}{f(u_{i})f(u_{i+1})}<0.\]
Thus $f'(u_{i+1})f(u_{i})-f(u_{i+1})f'(u_{i})<0$ and from which one can get $\displaystyle\frac{\partial^{2}\varphi_{i}}{\partial u_{i+1}\partial u_{i}}(u)<0$.

In fact, $\varphi_{i}$, $\frac{\partial\varphi_{i}}{\partial u_{i}}$, $\frac{\partial\varphi_{i}}{\partial u_{i+1}}$ and $\frac{\partial^{2}\varphi_{i}}{\partial u_{i+1}\partial u_{i}}$ only depend on the variables  $u_{i}$ and $u_{i+1}$.

\smallskip
\noindent(a) For $1\leq i\leq K$, $\varphi_{i}(u_{i+1}, u_{i+1})=0$. After the Mean value theorem, there exists  $\gamma\in(u_{i}, u_{i+1})$ such that
\begin{equation}\label{varphi1d}
\frac{1}{u_{i}-u_{i+1}}\big(\varphi_{i}(u_{i}, u_{i+1}) - \varphi_{i}(u_{i+1}, u_{i+1})\big)=\frac{\partial \varphi_{i}}{\partial u_{i}}(\gamma, u_{i+1}).
\end{equation}
Moreover, there exists  $\zeta\in(\gamma, u_{i+1})$ such that
\[\frac{1}{u_{i+1}-\gamma}\big(\frac{\partial\varphi_{i}}{\partial u_{i}}(\gamma, u_{i+1})-\frac{\partial\varphi_{i}}{\partial u_{i}}(\gamma, \gamma)\big) = \frac{\partial^{2}\varphi_{i}}{\partial u_{i+1}\partial u_{i}}(\gamma, \zeta).\]

As $\gamma<\zeta$, $\displaystyle\frac{\partial^{2}\varphi_{i}}{\partial u_{i+1}\partial u_{i}}(\gamma, \zeta)<0$. Thus $\displaystyle\frac{\partial\varphi_{i}}{\partial u_{i}}(\gamma, u_{i+1})<0$, since $\displaystyle\frac{\partial\varphi_{i}}{\partial u_{i}}(\gamma, \gamma)=0$. Then $\varphi_{i}(u_{i}, u_{i+1})>0$ by applying $\displaystyle\frac{\partial\varphi_{i}}{\partial u_{i}}(\gamma, u_{i+1})<0$ in $(\ref{varphi1d})$.

\smallskip
\noindent(b) After the Mean value theorem, there exists  $\gamma'\in(u_{1}, u_{2})$ such that 
\[\frac{\partial^{2}\varphi_{1}}{\partial u_{1}\partial u_{2}}(u_{1}, \gamma')=\frac{1}{u_{2}-u_{1}}\big(\frac{\partial\varphi_{1}}{\partial u_{1}}(u_{1}, u_{2})-\frac{\partial\varphi_{1}}{\partial u_{1}}(u_{1}, u_{1})\big).\]
As $\displaystyle\frac{\partial^{2}\varphi_{1}}{\partial u_{1}\partial u_{2}}(u_{1}, \gamma')<0$ and $\displaystyle\frac{\partial\varphi_{1}}{\partial u_{1}}(u_{1}, u_{1})=0$, one can get $\displaystyle\frac{\partial\varphi_{1}}{\partial u_{1}}(u_{1}, u_{2})<0$. 

\smallskip
\noindent(c) In the same way, there exists  $\zeta'\in(u_{K}, u_{K+1})$ such that 
\[\frac{\partial^{2}\varphi_{K}}{\partial u_{K}\partial u_{K+1}}(\zeta', u_{K+1})=\frac{1}{u_{K}-u_{K+1}}\big(\frac{\partial\varphi_{K}}{\partial u_{K+1}}(u_{K}, u_{K+1})-\frac{\partial\varphi_{K}}{\partial u_{K+1}}(u_{K+1}, u_{K+1})\big).\]
As $\displaystyle\frac{\partial^{2}\varphi_{K}}{\partial u_{K}\partial u_{K+1}}(\zeta', u_{K+1})<0$ and $\displaystyle\frac{\partial\varphi_{K}}{\partial u_{K+1}}(u_{K+1}, u_{K+1})=0$, one  gets $\displaystyle\frac{\partial\varphi_{K}}{\partial u_{K+1}}(u_{K}, u_{K+1})>0$. 
\end{proof}

\begin{proof}[Proof of Proposition \ref{podedim1}, continuation]

We set $\widetilde{x}^{*,M}\coloneqq(-M, \widetilde{x}^{*}_{2}, ..., \widetilde{x}^{*}_{K}, M)$ with  $M$ large enough such that $\widetilde{x}^{*,M}\in F_{K+1}^{+}$, then for $2\leq i \leq K-1$, $L_{i}(x^{*})=\frac{2}{\mu(V_{i}(x^{*}))}\varphi_{i}(\widetilde{x}^{*,M})$. Thus $L_{i}(x^{*})>0,\; i=2, ..., K-1$ owing to Lemma \ref{lemphi}-(a).

For $i =1$, 
\begin{align}
L_{1}(x^{*})&=A_{1}(x^{*})-2B_{1,2}(x^{*})\nonumber\\
&=\frac{2}{\mu\big(V_{1}(x^{*})\big)}\Big\{\mu\big(V_{1}(x^{*})\big)^{2}-f(\widetilde{x}^{*}_{2})\int_{V_{1}(x^{*})}(\widetilde{x}_{2}^{*}-\xi)f(\xi)d\xi\Big\}.\nonumber
\end{align}
If we denote $D_{1}(x^{*})\coloneqq\mu\big(V_{1}(x^{*})\big)^{2}-f(\widetilde{x}^{*}_{2})\int_{V_{1}(x^{*})}(\widetilde{x}_{2}^{*}-\xi)f(\xi)d\xi $, then 
\[D_{1}(x^{*})=\lim_{M\rightarrow+\infty}\varphi_{1}(\widetilde{x}^{*,M})+f(-M)\int_{V_{1}^{M}(x^{*})}\big(\xi-(-M)\big)f(\xi)d\xi ,\]
where $V_{1}^{M}(x^{*})=[-M, \widetilde{x}^{*}_{2}]$. 

For all $M$ such that $-M<\widetilde{x}^{*}_{2}$, $\displaystyle f(-M)\int_{V_{1}^{M}(x^{*})}\big(\xi-(-M)\big)f(\xi)d\xi >0$, then \[\lim_{M\rightarrow+\infty}f(-M)\int_{V_{1}^{M}(x^{*})}\big(\xi-(-M)\big)f(\xi)d\xi \geq0.\]
It follows from Lemma \ref{lemphi}-(b) that $\displaystyle\frac{\partial\varphi_{1}}{\partial u_{1}}(u)<0$ for $u\in F_{K+1}^{+}$, so that for a fixed $M_{1}$ such that $\widetilde{x}^{*, M_{1}}\in F_{K+1}^{+}$, we have $\displaystyle\varphi_{1}(\widetilde{x}^{*,M_{1}})\leq\lim_{M\rightarrow+\infty}\varphi_{1}(\widetilde{x}^{*,M})$. We also have $\varphi_{1}(\widetilde{x}^{*,M_{1}})>0$ by applying Lemma \ref{lemphi}-(a). It follows that 
\begin{align}
D_{1}(x^{*})&=\lim_{M\rightarrow+\infty}\varphi_{1}(\widetilde{x}^{*,M})+\lim_{M\rightarrow+\infty}f(-M)\int_{V_{1}^{M}(x^{*})}\big(\xi-(-M)\big)f(\xi)d\xi\nonumber\\
&\geq \varphi_{1}(\widetilde{x}^{*,M_{1}})+\lim_{M\rightarrow+\infty}f(-M)\int_{V_{1}^{M}(x^{*})}\big(\xi-(-M)\big)f(\xi)d\xi\nonumber\\
&>0.\nonumber
\end{align}
Then $L_{1}(x^{*})=\displaystyle\frac{2}{\mu\big(V_{1}(x^{*})\big)}D_{1}(x^{*})>0$. 

The proof of $L_{K}(x^{*})$ is similar by applying Lemma \ref{lemphi}-(c). Thus $H_{\mathcal{D}}(x^{*})$ is positive definite owing to Gershgorin circle theorem. 
\end{proof}

\subsection{Appendix E:  Proof of Theorem \ref{performance} - $(b)$ and $(c)$}
\begin{proof}[Proof of Theorem \ref{performance}]
\noindent(b) If $\mu$ has a $c$-th polynomial tail with $c>d+p$, then $\mu\in\mathcal{P}_{p}(\mathbb{R}^{d})$. Let $X, X_{1}, ..., X_{n}$ be i.i.d random variable with probability distribution $\mu$. Then,
\begin{align}
r_{n}&=\left\Vert R_{n}\right\Vert_{2}^{2}=\mathbb{E}\big[\max\,(\left| X_{1}\right|, ...,\left| X_{n}\right|)^{2}\big]=\mathbb{E}\big[\max(\left| X_{1}\right|^{p}, ...,\left| X_{n}\right|^{p})^{2/p}\big]\nonumber\\
&{\leq}\mathbb{E}\Big(\big[\sum_{i=1}^{n}\left|X_{i}\right|^{p}\big]^{2/p}\Big)\leq \Big[\mathbb{E}\big(\sum_{i=1}^{n}\left|X_{i}\right|^{p}\big)\Big]^{2/p}=\Big[n\,\mathbb{E}\left|X\right|^{p}\Big]^{2/p}=n^{2/p}\left\Vert X\right\Vert_{p}^{2},
\end{align}
where the last line is due to the fact that $X_{1}, ..., X_{n}$ have the same distribution as $X$. 
Moreover, we have 
\begin{equation}\label{50}
\rho_{K}(\mu)=K^{\frac{p+d}{d(c-p-d)}\gamma_{K}}\;\;\;\text{with}\;\;\;\lim_{K\rightarrow+\infty}\gamma_{K}=1
\end{equation}
owing to (\ref{polynomialtail}). It follows from (\ref{geneperformance}) that 
\[\mathbb{E}\big[\mathcal{D}(x^{(n)})-\mathcal{D}(x)\big]\leq\frac{2K}{\sqrt{n}}\Big[3r_{2n}^{2}+\big((2m_{2})\vee\rho_{K}(\mu)\big)\cdot\rho_{K}(\mu)\Big]\]
since $r_{2n}\geq m_{2}$ after the definitions of $r_{2n}$ and $m_{2}$. In addition, (\ref{50}) implies that $\rho_{K}(\mu)\rightarrow+\infty$ as $K\rightarrow +\infty$ and, for large enough $K$, $\rho_{K}(\mu)\geq 2m_{2}$. Therefore, 
\begin{align}
\mathbb{E}\big[\mathcal{D}(x^{(n)})-\mathcal{D}(x)\big]\leq& \frac{2K}{\sqrt{n}}\Big(3\cdot (2n)^{2/p}\left\Vert X\right\Vert_{p}^{2}+3K^{\frac{p+d}{d(c-p-d)}\gamma_{K}}\Big)\nonumber\\
=& \frac{K}{\sqrt{n}}\Big(C_{\mu, p}\,n^{2/p}+6K^{\frac{p+d}{d(c-p-d)}\gamma_{K}}\Big),\nonumber
\end{align}
where $C_{\mu, p}=6\cdot2^{2/p}\left\Vert X\right\Vert_{p}^{2}$ and $\lim_{K}\gamma_{K}=1$.

\noindent(c) The distribution $\mu$ is assumed to have a hyper-exponential tail, that is, $\mu=f\cdot \lambda_{d}$ with $f(\xi)=\tau \left| \xi\right|^{c}e^{-\vartheta\left|\xi\right|^{\kappa}}$ for $\left|\xi\right|$ large enough with $c>-d$. The real constant $\kappa$ is assumed to be greater than or equal to 2. Let $X$ be a random variable with probability distribution $\mu$. Therefore, for every $\lambda\in(0, \vartheta)$, $\mathbb{E}\,e^{\lambda\left|X\right|^{\kappa}}<+\infty$ and
\begin{align}\label{normrn}
r_{n}=&\left\Vert R_{n}\right\Vert_{2}^{2}=\mathbb{E}\big[\max(\left| X_{1}\right|, ...,\left| X_{n}\right|)^{2}\big]=\mathbb{E}\big[\max(\left| X_{1}\right|^{\kappa}, ...,\left| X_{n}\right|^{\kappa})^{2/\kappa}\big]\nonumber\\
=&\mathbb{E}\Big(\Big[\frac{1}{\lambda}\log\big(\max(e^{\lambda\left|X_{1}\right|^{\kappa}}, ...,e^{\lambda\left|X_{n}\right|^{\kappa}} )\big)\Big]^{2/\kappa}\Big)
\leq\left(\frac{1}{\lambda}\right)^{2/\kappa}\Big[\log\mathbb{E}\max(e^{\lambda\left|X_{1}\right|^{\kappa}}, ...,e^{\lambda\left|X_{n}\right|^{\kappa}})\Big]^{2/\kappa}\nonumber\\
\leq&\left(\frac{1}{\lambda}\right)^{2/\kappa}\Big\{\log\mathbb{E}\Big[\sum_{i=1}^{n}e^{\lambda\left|X_{i}\right|^{\kappa}}\Big]\Big\}^{2/\kappa}=\left(\frac{1}{\lambda}\right)^{2/\kappa}\big\{\log(n\mathbb{E}\,e^{\lambda\left|X\right|^{\kappa}})\big\}^{2/\kappa}\nonumber\\
=&\left(\frac{1}{\lambda}\right)^{2/\kappa}\big(\log \mathbb{E}\,e^{\lambda\left|X\right|^{\kappa}}+\log n\big)^{2/\kappa},
\end{align}
where the last line of (\ref{normrn}) is due to the fact that $X_{1}, ..., X_{n}$ have the same distribution than $X$. 
Under the same assumption as before, it follows from (\ref{upperbound}) that
\begin{equation}\label{boundrho}
\rho_{K}(\mu)\leq\gamma_{K}(\log K)^{1/\kappa}\cdot 2\vartheta^{-1/\kappa}\big(1+\frac{2}{d}\big)^{1/\kappa}\;\;\text{with}\;\;\limsup_{K\rightarrow+\infty} \gamma_{K}\leq 1.
\end{equation}
Moreover, it follows from (\ref{geneperformance}) that 
\[\mathbb{E}\big[\mathcal{D}(x^{(n)})-\mathcal{D}(x)\big]\leq\frac{2K}{\sqrt{n}}\Big[3r_{2n}^{2}+\big((2m_{2})\vee\rho_{K}(\mu)\big)\cdot\rho_{K}(\mu)\Big]\]
since $r_{2n}\geq m_{2}$ after the definitions of $r_{2n}$ and $m_{2}$. In addition, (\ref{boundrho}) implies that $\rho_{K}(\mu)\rightarrow+\infty$ as $K\rightarrow +\infty$ and, for large enough $K$, $\rho_{K}(\mu)\geq 2m_{2}$. Therefore, 
\begin{align}\label{trueforlambda}
\mathbb{E}\big[\mathcal{D}(x^{(n)})-\mathcal{D}(x)\big]\leq& \frac{2K}{\sqrt{n}}\Big\{3\cdot\Big( 1\vee \log\big(2\mathbb{E}\,e^{\lambda\left
|X\right|^{\kappa}}\big)\,\Big)^{2/\kappa}\big(\frac{1}{\lambda}\big)^{2/\kappa}\big[(\log n)^{2/\kappa}+1\big]\Big\}\nonumber\\
&+ 4\vartheta^{-2/\kappa}\gamma_{K}(\log K)^{2/\kappa} \big(1+\frac{2}{d}\big)^{2/\kappa}.
\end{align}
Inequality (\ref{trueforlambda}) is true for all $\lambda\in(0, \vartheta)$. We may take $\lambda=\frac{\vartheta}{2}$. It follows that 
\begin{equation}
\mathbb{E}\big[\mathcal{D}(x^{(n)})-\mathcal{D}(x)\big]\leq C_{\vartheta, \kappa, \mu}\cdot \frac{K}{\sqrt{n}}\Big[ 1+ (\log n)^{2/\kappa}+\gamma_{K}(\log K)^{2/\kappa}\big(1+\frac{2}{d}\big)^{2/\kappa}\Big], 
\end{equation}
where $C_{\vartheta, \kappa, \mu}=\Big[ 6\big(\frac{2}{\vartheta}\big)^{2/\kappa}\cdot(1\vee\log2\mathbb{E}\,e^{\vartheta\left|X\right|^{\kappa}/2})\Big]\vee 8\vartheta^{-2/\kappa}$ and $\limsup_{K}\gamma_{K}=1$.

Multi-dimensional normal distribution is a special case of hyper-exponential tail distribution, i.e. if $\mu=\mathcal{N}(m, \Sigma)$, we have $\kappa=2, \vartheta=\frac{1}{2}$ and $c=0$.  By the same reasoning as before, 
\[\mathbb{E}\big[\mathcal{D}(x^{(n)})-\mathcal{D}(x)\big]\leq C_{\mu}\cdot \frac{K}{\sqrt{n}}\Big[ 1+ \log n+\gamma_{K}\log K\big(1+\frac{2}{d}\big)\Big], \]
where $C_{\mu}=24\cdot\big(1\vee \log2\mathbb{E}\,e^{\left| X\right|^{2}/4}\big)$.  When $\mu=\mathcal{N}(0, \mathbf{I}_{d})$, $C_{\mu}=24(1+\frac{d}{2})\cdot\log 2$, since $\mathbb{E}\,e^{\left| X\right|^{2}/4}=2^{d/2}$ by the moment-generating function of a $\chi^{2}$ distribution. 
\end{proof}


\bibliographystyle{alpha}
\bibliography{cvgrate}

\begin{thebibliography}{VDVW96}

\bibitem[BDL08]{biau2008performance}
G{\'e}rard Biau, Luc Devroye, and G{\'a}bor Lugosi.
\newblock On the performance of clustering in {H}ilbert spaces.
\newblock {\em IEEE Transactions on Information Theory}, 54(2):781--790, 2008.

\bibitem[BFP98]{MR1664202}
Michel Bena\"{\i}m, Jean-Claude Fort, and Gilles Pag{\`e}s.
\newblock Convergence of the one-dimensional {K}ohonen algorithm.
\newblock {\em Adv. in Appl. Probab.}, 30(3):850--869, 1998.

\bibitem[BLM13]{boucheron2013concentration}
St{\'e}phane Boucheron, G{\'a}bor Lugosi, and Pascal Massart.
\newblock {\em Concentration Inequalities: A nonasymptotic theory of
  independence}.
\newblock Oxford university press, 2013.

\bibitem[Bol08]{bolley2008separability}
Fran{\c{c}}ois Bolley.
\newblock Separability and completeness for the {W}asserstein distance.
\newblock In {\em S{\'e}minaire de probabilit{\'e}s XLI}, pages 371--377.
  Springer, 2008.

\bibitem[BP93]{MR1239840}
Catherine Bouton and Gilles Pag\`es.
\newblock Self-organization and a.s. convergence of the one-dimensional
  {K}ohonen algorithm with non-uniformly distributed stimuli.
\newblock {\em Stochastic Process. Appl.}, 47(2):249--274, 1993.

\bibitem[EM03]{el2003note}
Moawwad El-Mikkawy.
\newblock A note on a three-term recurrence for a tridiagonal matrix.
\newblock {\em Applied Mathematics and Computation}, 139(2):503--511, 2003.

\bibitem[FG15]{fournier2015rate}
Nicolas Fournier and Arnaud Guillin.
\newblock On the rate of convergence in {W}asserstein distance of the empirical
  measure.
\newblock {\em Probability Theory and Related Fields}, 162(3-4):707--738, 2015.

\bibitem[FP95]{fort1995convergence}
Jean-Claude Fort and Gilles Pag{\`e}s.
\newblock On the as convergence of the {K}ohonen algorithm with a general
  neighborhood function.
\newblock {\em The Annals of Applied Probability}, pages 1177--1216, 1995.

\bibitem[GG12]{gersho2012vector}
Allen Gersho and Robert~M Gray.
\newblock {\em Vector quantization and signal compression}, volume 159.
\newblock Springer Science \& Business Media, 2012.

\bibitem[GL00]{graf2000foundations}
Siegfried Graf and Harald Luschgy.
\newblock {\em Foundations of Quantization for Probability Distributions},
  volume 1730 of {\em Lecture Notes in Mathematics}.
\newblock Springer-Verlag, Berlin, 2000.

\bibitem[Gy{\"o}02]{linder2002principle}
L{\'a}szl{\'o} Gy{\"o}rfi, editor.
\newblock {\em Principles of nonparametric learning}, volume 434 of {\em CISM
  International Centre for Mechanical Sciences. Courses and Lectures}.
\newblock Springer-Verlag, Vienna, 2002.
\newblock Papers from the Summer School held in Udine, July 9--13, 2001.

\bibitem[{IEE}82]{IEEE1982}
{IEEE {T}ransactions on {I}nformation {T}heory}.
\newblock {\em IEEE Trans. Inform. Theory}, 28(2), 1982.

\bibitem[Kie82]{MR651815}
John~C. Kieffer.
\newblock Exponential rate of convergence for {L}loyd's method. {I}.
\newblock {\em IEEE Trans. Inform. Theory}, 28(2):205--210, 1982.

\bibitem[Kie83]{kieffer1983uniqueness}
J.~Kieffer.
\newblock Uniqueness of locally optimal quantizer for log-concave density and
  convex error weighting function.
\newblock {\em IEEE Transactions on Information Theory}, 29(1):42--47, January
  1983.

\bibitem[Kol11]{koltchinskii2011introduction}
Vladimir Koltchinskii.
\newblock {\em Oracle Inequalities in Empirical Risk Minimization and Sparse
  Recovery Problems}.
\newblock Springer, 2011.

\bibitem[Lem05]{lemaire2005estimation}
Vincent Lemaire.
\newblock {\em Estimation r{\'e}cursive de la mesure invariante d'un processus
  de diffusion.}
\newblock PhD thesis, Universit{\'e} de Marne la Vall{\'e}e, 2005.

\bibitem[Liu19]{liu2019optimal}
Yating Liu.
\newblock {\em Optimal quantization: limit theorems, clustering and simulation
  of the McKean-Vlasov equation}.
\newblock PhD thesis, Sorbonne Universit{\'e}, 2019.

\bibitem[Llo82]{MR651807}
Stuart~P. Lloyd.
\newblock Least squares quantization in {PCM}.
\newblock {\em IEEE Trans. Inform. Theory}, 28(2):129--137, 1982.

\bibitem[LP02]{lamberton2002recursive}
Damien Lamberton and Gilles Pag\`es.
\newblock Recursive computation of the invariant distribution of a diffusion.
\newblock {\em Bernoulli}, 8(3):367--405, 2002.

\bibitem[LP08]{luschgy2008functional}
Harald Luschgy and Gilles Pag{\`e}s.
\newblock Functional quantization rate and mean regularity of processes with an
  application to {L}{\'e}vy processes.
\newblock {\em The Annals of Applied Probability}, 18(2):427--469, 2008.

\bibitem[Mac67]{macqueen1967classification}
J.~MacQueen.
\newblock Some methods for classification and analysis of multivariate
  observations.
\newblock In {\em Proc. {F}ifth {B}erkeley {S}ympos. {M}ath. {S}tatist. and
  {P}robability ({B}erkeley, {C}alif., 1965/66)}, pages Vol. I: Statistics, pp.
  281--297. Univ. California Press, Berkeley, Calif., 1967.

\bibitem[Pag98]{pages1998space}
Gilles Pag{\`e}s.
\newblock A space quantization method for numerical integration.
\newblock {\em Journal of computational and applied mathematics}, 89(1):1--38,
  1998.

\bibitem[Pag15]{pages2015introduction}
Gilles Pag{\`e}s.
\newblock Introduction to vector quantization and its applications for
  numerics.
\newblock {\em ESAIM: Proceedings and Surveys}, 48:29--79, 2015.

\bibitem[Pag18]{pages2018numerical}
Gilles Pag{\`e}s.
\newblock {\em Numerical Probability: An Introduction with Applications to
  Finance}.
\newblock Springer, 2018.

\bibitem[Pol81]{pollard1981strong}
David Pollard.
\newblock Strong consistency of $k$-means clustering.
\newblock {\em The Annals of Statistics}, 9(1):135--140, 1981.

\bibitem[Pol82a]{pollard1982central}
David Pollard.
\newblock A central limit theorem for $k$-means clustering.
\newblock {\em The Annals of Probability}, pages 919--926, 1982.

\bibitem[Pol82b]{pollard1982quantization}
David Pollard.
\newblock Quantization and the method of $k$-means.
\newblock {\em IEEE Transactions on Information theory}, 28(2):199--205, 1982.

\bibitem[PS12]{pages2012asymptotics}
Gilles Pag{\`e}s and Abass Sagna.
\newblock Asymptotics of the maximal radius of an $ {L}^{r} $-optimal sequence
  of quantizers.
\newblock {\em Bernoulli}, 18(1):360--389, 2012.

\bibitem[PY16]{lloydpagesyu}
Gilles Pag{\`e}s and Jun Yu.
\newblock Pointwise convergence of the {L}loyd {I} algorithm in higher
  dimension.
\newblock {\em SIAM Journal on Control and Optimization}, 54(5):2354--2382,
  2016.

\bibitem[Tru82]{trushkin1982sufficient}
A~Trushkin.
\newblock Sufficient conditions for uniqueness of a locally optimal quantizer
  for a class of convex error weighting functions.
\newblock {\em IEEE Transactions on Information Theory}, 28(2):187--198, 1982.

\bibitem[VDVW96]{van1996weak}
Aad~W Van Der~Vaart and Jon~A Wellner.
\newblock {\em {W}eak {C}onvergence}.
\newblock Springer, 1996.

\end{thebibliography}

\end{document}